\DeclareMathOperator{\Kh}{Kh}
\DeclareMathOperator{\Lee}{Lee}
\DeclareMathOperator{\Mod}{Mod}
\DeclareMathOperator{\CFK}{CFK}
\newcommand{\Z}{\mathbb{Z}}
\newcommand{\fraks}{\mathfrak{s}}
\newtheorem{thm}{Theorem}[section]
\newtheorem{prop}[thm]{Proposition}
\newtheorem{lem}[thm]{Lemma}
\newtheorem{cor}[thm]{Corollary}
\theoremstyle{definition}
\newtheorem{rem}[thm]{Remark}
\newtheorem{example}[thm]{Example}
\begin{document}
\title{Annular Rasmussen invariants: Properties and 3-braid classification}
\author{Gage Martin}
\date{}
\maketitle

\begin{abstract}
We prove that for a fixed braid index there are only finitely many possible shapes of the annular Rasmussen $d_t$ invariant of braid closures. Applying the same perspective to the knot Floer invariant $\Upsilon_K(t)$, we show that for a fixed concordance genus of $K$ there are only finitely many possibilities for $\Upsilon_K(t)$. Focusing on the case of 3-braids, we compute the Rasmussen $s$ invariant and the annular Rasmussen $d_t$ invariant of all 3-braid closures. As a corollary, we show that the vanishing/non-vanishing of the $\psi$ invariant is entirely determined by the $s$ invariant and the self-linking number.
\end{abstract}

\section{Introduction}

In~\cite{khovanov_categorification_2000}, Khovanov defined a bigraded homology theory $\Kh^{i,j}(L)$ associated to an oriented link $L\subseteq S^3$. Later in~\cite{lee_endomorphism_2005}, Lee defined a homology theory $\Lee(L)$ by adding additional differentials to the Khovanov chain complex. Lee also showed that the total rank of $\Lee(L)$ is $2^{\vert L \vert}$ where $\vert L \vert$ is the number of components of $L$.

For a knot $K$, J.~Rasmussen used a $\mathbb{Z}$ filtration on Lee homology to define an invariant $s(K)$~\cite{rasmussen_khovanov_2010}. His invariant gives a lower bound on the smooth 4-ball genus of a knot $K$ and is strong enough to give a combinatorial reproof of the Milnor conjecture about the smooth 4-ball genus of torus knots~\cite{john_w_milnor_singular_1968}. The definition of the $s$ invariant was later extended to oriented links by Beliakova and Wehrli~\cite{beliakova_categorification_2008}. Pardon gives a slightly different extension of the Rasmussen invariant to oriented links in~\cite{pardon_link_2012} but in the present paper we use the extension by Beliakova and Wehrli. 

In a slightly different direction, Asaeda, Przytycki, and Sikora~\cite{asaeda_categorification_2004} and L.~Roberts~\cite{roberts_knot_2013} define a version of Khovanov homology called annular Khovanov homology for oriented links $L$ embedded in a thickened annulus $A \times I$ which is triply graded. Additionally if the thickened annulus is embedded in $S^3$ so that is it unknotted, then the additional grading on annular Khovanov homology of $L$ induces a $\mathbb{Z}$ filtration on the standard Khovanov homology of $L$. Annular Khovanov homology detects the trivial braid closure~\cite{baldwin_categorified_2015} and detects some non-conjugate braids related by exchange moves~\cite{hubbard_sutured_2017}.

Then in~\cite{grigsby_annular_2017}, Grigsby, A.~Licata, and Wehrli combine the ideas above and show that for an oriented annular link $L \subset A \times I \subset S^3$ that the Lee homology of $L$ is $\mathbb{Z} \oplus \mathbb{Z}$ filtered. From this data, using ideas of Ozsv\'{a}th, Stipsicz, and Szab\'{o}~\cite{ozsvath_concordance_2017}, as reinterpreted by Livingston~\cite{livingston_notes_2017}, they construct a piecewise linear function $d_t(L)$ called the annular Rasmussen invariant. At $t = 0 $ the $d_t$ invariant recovers the $s$ invariant of $L$ by $s(L) -1 = d_t(L)$ and additionally  Grigsby, A.~Licata, and Wehrli show that for braid closures $d_t(\widehat{\beta})$ can be used to show that $\widehat{\beta}$ is right-veering and also to show that $\widehat{\beta}$ is not quasipositive.

In the present paper we investigate the $\mathbb{Z} \oplus \mathbb{Z}$ filtered Khovanov-Lee complex of braid closures and use its algebraic structure to obtain strong restrictions on the annular Rasmussen invariant.

\newtheorem*{dtn}{Theorem~\ref{dtn}}
\begin{dtn}
For an $n$-braid $\beta$, there are only finitely many possible shapes that $d_t( \widehat{\beta})$ can take and there is a method for enumerating all the possibilities.
\end{dtn}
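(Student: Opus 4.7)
The plan is to show that once $n$ is fixed, the combinatorial data determining $d_t(\widehat{\beta})$ lies in a finite set depending only on $n$, and then to extract an enumeration from this bound. By the construction of Grigsby--Licata--Wehrli, $d_t(\widehat{\beta})$ is built from the $\mathbb{Z}\oplus\mathbb{Z}$ filtered Lee homology of $\widehat{\beta}$ via the Ozsv\'{a}th--Stipsicz--Szab\'{o}/Livingston recipe. Lee homology has a canonical basis indexed by orientations of $\widehat{\beta}$, so its rank is at most $2^n$, and each generator $\mathbf{s}_o$ carries a bifiltration value $(j_o, k_o)$. The piecewise linear function $d_t$ is then determined by this multiset of bifiltration values; in particular, its shape depends only on the relative positions of the $(j_o, k_o)$.

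Next I would bound the relative spread of the bifiltration multiset in terms of $n$ alone. The annular grading $k_o$ counts with signs the circles in the oriented resolution of $\widehat{\beta}$ that are non-trivial in the thickened annulus; since there are at most $n$ such circles, $k_o \in \{-n, \ldots, n\}$. For the quantum grading, I would show that after normalizing by the writhe-dependent shift, the relative differences $j_o - j_{o'}$ are controlled by how the orientation change affects the oriented resolution and its Lee labeling, and that these differences are bounded by an explicit function of $n$. The key point is that although the writhe of a braid word can be arbitrarily large, it contributes only an overall translation of $d_t$ and so does not affect its shape.

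Combining these bounds, the shape of $d_t(\widehat{\beta})$ is determined by a multiset of at most $2^n$ lattice points in a bounded region of $\mathbb{Z}^2$, of which there are only finitely many possibilities. An enumeration method follows directly: list all such multisets, and for each produce the associated piecewise linear function via the procedure of~\cite{livingston_notes_2017}; one can then discard those which cannot be realized by any $n$-braid using algebraic constraints such as the pairing of generators under orientation reversal. The main obstacle I anticipate is making precise the claim that the quantum filtration spread is bounded independently of the length of the braid word; this requires a careful choice of normalization so that the writhe dependence is absorbed into an overall translation, leaving only bounded intrinsic differences among the $j_o$.
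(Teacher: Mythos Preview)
Your proposal rests on a misreading of the definition of $d_t$. The function $d_t(\widehat{\beta})$ is not built from the bifiltration values of the $2^{|L|}$ Lee canonical classes $\mathfrak{s}_o$ as $o$ ranges over orientations. It is defined using a \emph{single} class, namely $[\mathfrak{s}_o]$ for the braid orientation $o$, via $d_t(\widehat{\beta})=\max\{j_t(x):[x]=[\mathfrak{s}_o]\}$. The linear pieces of $d_t$ are therefore governed by which \emph{chain-level} distinguished generators can occur in optimal representatives of this one class, not by the multiset of $(j_o,k_o)$ for varying $o$. So bounding a multiset of at most $2^n$ lattice points coming from the canonical generators does not control the shape of $d_t$; indeed, a priori there are infinitely many chain-level generators (one for each labeling of each complete resolution), and the content of the theorem is that only finitely many of their $j_t$-lines can matter. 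Moreover, even on its own terms your bound on the $j$-spread of the $\mathfrak{s}_o$'s is problematic: for different orientations the oriented resolution sits in different homological gradings, shifted by linking numbers between sublinks, and these linking numbers are unbounded for fixed $n$.

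The paper's argument is different and more direct. It pins down the endpoint $d_1(\widehat{\beta})=w(\widehat{\beta})$ and uses the bound $m_t\in\{n,n-2,\ldots,-n+2\}$ on the slopes (the improved upper endpoint $-n+2$ rather than $-n$ being established earlier in the section) to confine $d_t$ to a triangle with vertex $(1,w(\widehat{\beta}))$. Inside this triangle, the relevant chain-level generators lie on finitely many lines: their $k$-gradings are in $\{-n,\ldots,n\}$, and the triangle forces their $j$-gradings to lie in $\{w-n,\ldots,w+n-2\}$. The possible shapes of $d_t$ are then the finitely many monotone-in-$t$ piecewise-linear paths along these lines ending at $(1,w)$, and Proposition~\ref{techprop} eliminates some of them. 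The writhe enters only as an overall translation, so ``shape'' is genuinely a finite list depending on $n$.
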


\begin{rem}
For 3-braids, the number of possible shapes of $d_t( \widehat{\beta})$ is three. For 4-braids, the number is seven and for 5-braids there are 18 possible shapes.
\end{rem}

Applying some of the same techniques to the $\mathbb{Z} \oplus \mathbb{Z}$ filtered knot Floer complex gives restrictions on the invariant $\Upsilon_K(t)$ originally defined by Ozsv\'{a}th, Stipsicz, and Szab\'{o} in~\cite{ozsvath_concordance_2017}.
\newtheorem*{UpsilonFin}{Theorem~\ref{UpsilonFinite}}
\begin{UpsilonFin}
For a fixed concordance genus $c$ there are finitely many possibilities for $\Upsilon_K(t)$ for any $K$ of concordance genus $c$ and a method for enumerating all the possibilities.
\end{UpsilonFin}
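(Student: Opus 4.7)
The plan is to adapt the strategy used for Theorem~\ref{dtn} to the knot Floer setting. First, I would exploit concordance invariance of $\Upsilon_K(t)$, established by Ozsv\'{a}th, Stipsicz, and Szab\'{o} in~\cite{ozsvath_concordance_2017}: if $K$ has concordance genus $c$, there is a knot $K'$ concordant to $K$ with Seifert genus $c$, and $\Upsilon_{K'}(t) = \Upsilon_K(t)$. So it suffices to prove that there are only finitely many possible shapes of $\Upsilon_{K'}(t)$ as $K'$ ranges over knots of Seifert genus at most $c$.

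Second, I would use the classical fact that the Alexander filtration on $\CFK^\infty(K')$ is supported in Alexander gradings $[-g(K'), g(K')] \subseteq [-c, c]$. In the reinterpretation of $\Upsilon_K(t)$ due to Livingston~\cite{livingston_notes_2017}, the invariant is computed from the $\mathbb{Z} \oplus \mathbb{Z}$-filtered chain homotopy type of $\CFK^\infty(K')$: for each $t$ one minimizes a linear functional of the form $(1-t/2)\,i + (t/2)\,j$ over cycles representing the generator of $\widehat{HF}(S^3)\cong\mathbb{F}$. Since only finitely many Alexander levels support a generator, the slopes of the resulting piecewise linear function lie in the finite set $\{-c, -c+1, \ldots, c\}$, and the breakpoints are rationals whose denominators are bounded in terms of $c$.

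The enumeration method then proceeds in parallel with the $d_t$ case: one lists all combinatorial possibilities for a finitely generated, $\mathbb{Z}\oplus\mathbb{Z}$-filtered minimal model of $\CFK^-(K')$ of Alexander width at most $2c$; for each such model one reads off the resulting $\Upsilon$ function; and one takes the union over all Seifert genera $g\le c$. Since the combinatorial data is finite, so is the list of possible outputs, and by construction every $\Upsilon_K(t)$ with concordance genus $c$ appears somewhere on it.

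The main obstacle, by contrast with the Khovanov--Lee setting of Theorem~\ref{dtn}, is that the Lee generators used to pin down $d_t$ have a concrete description via orientations of a braid closure, whereas the analogous pivot generators in $\CFK^\infty$ are not intrinsically labeled and the $U$-action makes the complex infinite-dimensional. Finiteness nevertheless survives because $\Upsilon$ depends only on the minimal $\mathbb{F}[U]$-model of $\CFK^-(K')$, which the Seifert genus bound forces to occupy a bounded region of the $(i,j)$-plane. The step requiring the most care will be verifying that, up to the equivalences of filtered complexes which preserve $\Upsilon$, only finitely many such minimal models arise from knots of fixed Seifert genus.
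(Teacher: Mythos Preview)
Your first two paragraphs already contain the essential content of the paper's argument: pass to a concordant knot of Seifert genus $c$, then use that $\Upsilon_K(0)=0$ and that all slopes lie in the finite set $\{-c,\ldots,c\}$. The paper pushes this observation only slightly further: since the slopes are integral and $\Upsilon_K(0)=0$, each linear segment of $\Upsilon_K$ lies on a line through some integer point on $\{t=1\}$ with integer slope in $[-c,c]$; the bound $|\Upsilon_K(t)|\le ct$ forces those integer points to lie in $\{(1,-c),\ldots,(1,c)\}$. One then draws this finite arrangement of lines inside the triangle with vertices $(0,0),(1,c),(1,-c)$ and enumerates monotone paths from $(0,0)$ to the line $t=1$. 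That is the entire enumeration method.

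Where your proposal diverges is in the third and fourth paragraphs, and this is a genuine gap rather than an alternative route. You suggest enumerating minimal $\mathbb{F}[U]$-models of $\CFK^-(K')$ of Alexander width at most $2c$, and you flag as the delicate step the claim that only finitely many such models arise. That claim is false as stated: already among genus-one knots (e.g.\ the twist knots) the rank of the reduced $\CFK$ complex is unbounded, so there is no finite list of minimal models to tabulate. The finiteness of $\Upsilon$-shapes does not come from a finiteness of complexes; it comes directly from the piecewise-linear constraints on the function itself. Once you have the integrality of slopes and the integrality of the $t=1$ intercepts (both implicit in your second paragraph), the argument is complete, and the detour through classifying $\CFK^-$ models should be discarded.
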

\begin{rem}
For concordance genus 1, the number of possibilities for $\Upsilon_K(t)$ is five. For concordance genus 2, there are over 50 possibilities for $\Upsilon_K(t)$.
\end{rem}

Because the value of $d_t(\widehat{\beta})$ at $t = 0$ is $s(\widehat{\beta}) - 1$, one may hope that Theorem~\ref{dtn} provides a new upper bound on the $s$ invariant of braid closures. Unfortunately, the upper bound provided by Theorem~\ref{dtn} for a braid $\beta$ with braid index $n$ is $s(\widehat{\beta}) \leq w(\widehat{\beta}) + n - 2$ which is never better than the bounds coming from~\cite{lobb_computable_2011} when Lobb's upper bound $U(D)$ is computed for the diagram $D = \widehat{\beta}$.
 
 Applying this perspective on $d_t( \widehat{\beta})$ to 3-braids we get that the $d_t$ invariant of any 3-braid closure $\widehat{\beta}$ depends only on the writhe $w(\widehat{\beta})$ of the braid closure and the Rasmussen invariant $s(\widehat{\beta})$ of the closure.
 
\newtheorem*{dt3}{Theorem~\ref{dt3}}
\begin{dt3}
When $\beta$ is a 3-braid, then for $t$ between $0$ and $1$ one of the following holds $d_t( \widehat{\beta}) = w(\beta) - 3 + 3t $, $d_t( \widehat{\beta}) = w(\beta) - 1 + t$ or $d_t( \widehat{\beta}) = w(\beta) +1- t$.
\end{dt3}
In other words, if $\beta$ is a 3-braid then $d_t( \widehat{\beta})$ is entirely determined by the $s$ invariant and the writhe.

With Theorem~\ref{dt3} in mind, all that is needed to compute the $d_t$ invariants of 3-braid closures is to compute the $s$ invariant of all 3-braid closures. Focusing on 3-braid closures allows us to use Murasugi's classification of 3-braids up to conjugacy. By understanding how the $s$ invariant changes under adding crossings we are able to compute the $s$ invariant of all 3-braid closures.

\begin{thm}
The value of the $s$ invariant can be read off from Murasugi's classification for every 3-braid closure.
\end{thm}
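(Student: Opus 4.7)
The plan is a case analysis following Murasugi's classification of 3-braids up to conjugacy. Murasugi's normal forms present every 3-braid as $\Delta^{2k}$ times a word of one of three types, where $\Delta^2=(\sigma_1\sigma_2)^3$ generates the center of $B_3$. Since $s(\widehat{\beta})$ is a conjugacy invariant, it suffices to compute $s$ on these representatives, which are parametrized by an integer $k$ together with additional positive integer parameters. The goal is to produce an explicit formula for $s(\widehat{\beta})$ in terms of these parameters for each normal form.

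The two main tools are (a) the slice-Bennequin inequality $s(\widehat{\beta})\leq w(\widehat{\beta})+n-1$, sharp for quasipositive braids, together with its mirror counterpart for quasinegative braids; and (b) the crossing-change inequality $s(\widehat{\beta_+})-2\leq s(\widehat{\beta_-})\leq s(\widehat{\beta_+})$. For each Murasugi normal form I would first locate a base regime where the braid is quasipositive (typically $k\gg 0$, so the full twists dominate any negative syllables) and $s$ saturates the slice-Bennequin bound; a mirror argument handles $k\ll 0$. Then I would propagate to intermediate $k$ by changing one crossing at a time and applying (b). A crucial simplification is that Theorem~\ref{dt3} already restricts $s(\widehat{\beta})$ to lie in $\{w(\beta)-2,\,w(\beta),\,w(\beta)+2\}$, so at each inductive step the crossing-change bound combined with a parity argument on $s(\widehat{\beta})+w(\widehat{\beta})$ modulo $2$ needs only to decide between at most three options.

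The families $\Delta^{2k}\sigma_2^q$ and $\Delta^{2k}\sigma_1\sigma_2^{-q}$ are the easier cases, since their closures are torus links and two-bridge links whose Rasmussen invariants are known or follow directly from the slice-Bennequin inequality applied to an explicit Seifert surface. The main obstacle is the generic alternating family $\Delta^{2k}\sigma_1^{p_1}\sigma_2^{-q_1}\cdots\sigma_1^{p_s}\sigma_2^{-q_s}$ with $p_i,q_i>0$, where the closure is neither quasipositive nor quasinegative for moderate $k$. Here I would induct on the syllable length $s$ and on $|k|$: the inductive step lowers one of the $p_i$ or $q_i$ by a crossing change, changing $s(\widehat{\beta})$ by $0$ or $\pm 2$ per step, and the $s=1$ base case collapses to an already-handled family after a single Markov destabilization. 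The hard part will be organizing the bookkeeping across all subcases — in particular, verifying that the inductive formula remains consistent with Theorem~\ref{dt3} on the boundaries between the three shapes — rather than any individual difficult computation.
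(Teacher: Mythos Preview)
Your outline shares the right skeleton with the paper---Murasugi case analysis, the three-value restriction from Theorem~\ref{dt3}, quasipositivity to anchor the extremes, and crossing changes to propagate---but there is a genuine gap in the propagation step. When you change a crossing, the writhe shifts by $2$, so the allowed set $\{w-2,w,w+2\}$ shifts as well; the crossing-change inequality $s(\widehat{\beta_+})-2\leq s(\widehat{\beta_-})\leq s(\widehat{\beta_+})$ then still leaves \emph{two} of the three values possible at every intermediate step. You describe this as ``bookkeeping,'' but you never supply a mechanism that actually selects the correct value as $k$ moves through the transition region between the quasipositive and quasinegative regimes. An induction on syllable length does not help here: the ambiguity recurs at each inductive step.

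The paper resolves this with an ingredient you do not mention: Plamenevskaya's invariant $\psi$. Proposition~\ref{slands} says that $s(\widehat{\beta})=w(\widehat{\beta})-2$ forces $\psi(\widehat{\beta})\neq 0$, and Lemma~\ref{psi3} gives a \emph{complete} table of when $\psi$ vanishes for $3$-braids. Combining these with the crossing-change monotonicity (Lemma~\ref{crossings}) and the mirror relation for knots pins down $s$ in almost every case. Even so, one braid---$h^{-1}\sigma_2^4$---escapes all of these arguments (both $\psi(\widehat{\beta})$ and $\psi(\widehat{m(\beta)})$ vanish, and the closure is a link so mirroring does not give $s\mapsto -s$), and the paper devotes Section~\ref{computation} to an explicit Bar-Natan--style calculation for it. Your remark that the family $h^d\sigma_2^m$ is ``easier'' because the closures are torus links with known $s$ is therefore too optimistic: these are multi-component links, the Beliakova--Wehrli $s$ is not tabulated for them, and this family in fact contains the single hardest case.
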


A more detailed statement of value of the $s$ invariant of 3-braid closures can be found in Theorems~\ref{slope3},~\ref{slope-1}, and~\ref{slope1}. A discussion of why Murasugi's classification is used here instead of a more modern approach to the conjugacy problem for 3-braids can be found in Remark~\ref{Xu}.

Because having non-zero $s$ invariant obstructs sliceness, it is a natural question to ask if there are some 3-braid closures that were previously unknown if they were slice and our computation shows they have non-zero $s$ invariant. However the only 3-braids where it is not know if they are slice are known to have finite concordance order and so their $s$ invariants are necessarily zero~\cite{lisca_3-braid_2017}.

Using braid foliations, Birman and Menasco completely classify which links are closures of 3-braids and in particular they find two infinite families of non-conjugate 3-braids whose closures are isotopic as links~\cite{birman_studying_1993}. Knowing that the $d_t$ invariants of a braid closure are invariants of the conjugacy class of of the braid, one may ask if the $d_t$ invariants of 3-braid closures can detect these families of non-conjugate 3-braids. However, the $d_t$ invariants of 3-braid closures only detect the writhe of the braid closure and an invariant of the isotopy class of the braid closure, the $s$ invariant, so they can not distinguish the non-conjugate 3-braids.

By comparing the value of the $s$ invariant of 3-braid closures with the vanishing/non-vanishing of $\psi$ defined in~\cite{plamenevskaya_transverse_2006} we obtain the following corollary of Theorem~\ref{slope3}.
\newtheorem*{NotEffective}{Corollary \ref{NotEffective}}

\begin{NotEffective}
The invariant $\psi$ is not effective for 3-braid closures. In particular, the vanishing/non-vanishing of $\psi$ for 3-braids is determined by the $s$ invariant and the self-linking number.
\end{NotEffective}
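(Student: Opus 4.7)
The strategy is to show that for a $3$-braid $\beta$ the nonvanishing of $\psi(\widehat{\beta})$ is equivalent to the single numerical condition $s(\widehat{\beta}) = sl(\widehat{\beta}) + 1$. Since $sl(\widehat{\beta}) = w(\beta) - 3$ for any $3$-braid, this condition reads $s(\widehat{\beta}) = w(\beta) - 2$, which depends only on the $s$ invariant and the self-linking number; this immediately proves the ``not effective'' statement once the equivalence is established.

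For the forward implication, I would invoke Plamenevskaya's original bound that $\psi(\widehat{\beta}) \neq 0$ forces the self-linking inequality $sl(\widehat{\beta}) \leq s(\widehat{\beta}) - 1$ to be sharp. Combining this with Theorem~\ref{dt3}, which restricts $s(\widehat{\beta})$ to the three values $w(\beta)-2$, $w(\beta)$, $w(\beta)+2$, one concludes that $\psi(\widehat{\beta}) = 0$ whenever $s(\widehat{\beta}) \neq w(\beta) - 2$. This half of the argument is formal and uses only known results.

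For the reverse implication, I would enumerate, using Theorem~\ref{slope3} and Murasugi's normal forms, the conjugacy classes of $3$-braids whose closures satisfy $s(\widehat{\beta}) = w(\beta) - 2$. For each such family the goal is to verify $\psi(\widehat{\beta}) \neq 0$. The natural approach is to split into two cases: either the representative is (conjugate to) a quasipositive braid, in which case Plamenevskaya's nonvanishing theorem for quasipositive braids applies directly, or else I would argue at the chain level that the distinguished Plamenevskaya cycle survives. A useful organizing principle is that $\psi$ is preserved under conjugation and behaves well under insertion of positive generators $\sigma_i$, so one can reduce each infinite Murasugi family to a base case plus the effect of adding positive crossings.

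The main obstacle is this reverse direction: Murasugi's list contains several infinite families that need not be obviously quasipositive, and one must check uniformly within each family that the Plamenevskaya cycle represents a nonzero homology class. I expect this to require either a direct identification with known quasipositive representatives after conjugation, or a small induction verifying that whenever $s = w - 2$ is achieved along a family, the additional generators do not kill $\psi$ at the chain level.
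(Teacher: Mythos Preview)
Your overall target equivalence, $\psi(\widehat{\beta})\neq 0 \iff s(\widehat{\beta})=w(\beta)-2$, is exactly what the paper extracts as a corollary of Lemma~\ref{psi3} and Theorem~\ref{slope3}. However, you have the two directions reversed in difficulty, and your ``forward'' argument rests on a result that does not exist.

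The implication $s(\widehat{\beta})=w(\beta)-2 \Rightarrow \psi(\widehat{\beta})\neq 0$ is the easy one: it is precisely Proposition~\ref{slands} (Plamenevskaya's result that sharpness of the self-linking bound forces $\psi\neq 0$). No enumeration, quasipositivity check, or chain-level argument is needed here; your proposed case analysis for this direction is unnecessary.

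The implication $\psi(\widehat{\beta})\neq 0 \Rightarrow s(\widehat{\beta})=w(\beta)-2$ is the substantive one, and your argument for it is circular. There is no general theorem of Plamenevskaya (or anyone else) saying that $\psi\neq 0$ forces the self-linking bound to be sharp; indeed the paper notes explicitly that whether vanishing/non-vanishing of $\psi$ is determined by $s$ and the self-linking number is open in general. What you are invoking is essentially the statement of the corollary itself. The paper handles this direction by comparing two classifications: Lemma~\ref{psi3} lists exactly the $3$-braids with $\psi\neq 0$, and the proof of Theorem~\ref{slope3} then verifies, family by family, that every braid on that list satisfies $s=w-2$. That enumeration is where the work lies, and it cannot be replaced by a formal appeal to a Plamenevskaya-type bound.
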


It is currently an open question if $\psi$ is an effective transverse invariant and relatedly it is also unknown if the vanishing/non-vanishing of $\psi$ is always determined only by the $s$ invariant and the self-linking number.

The organization of the paper is as follows. In Section~\ref{background}, we review some basic facts about 3-braids and braid closure invariants from Khovanov homology. In Section~\ref{additionalproperties}, we investigate new properties of the annular Rasmussen $d_t$ invariants and prove Theorem~\ref{dtn}. In Section~\ref{s3braids}, we compute the Rasmussen $s$ invariant for 3-braid closures and prove Theorems~\ref{slope3},~\ref{slope-1}, and~\ref{slope1} and in Subsection~\ref{Upsilon} we apply some of the same techniques to prove Theorem~\ref{UpsilonFinite}. In Section~\ref{compare}, we compare our computations of the $s$ invariant of 3-braid closures to the computations of other categorified invariants of 3-braid closures. Section~\ref{computation} contains an explicit computation of the $s$ invariant of a single 3-braid closure that was not computable via the methods used in Section~\ref{s3braids}.

\subsection*{Acknowledgements}

The questions addressed in this paper were inspired by a question Joan Birman asked Eli Grigsby and Anthony Licata about the relationship between braid conjugacy invariants from annular Khovanov-Lee homology and classical solutions to the braid conjugacy problem. The author would like to thank John Baldwin, Eli Grigsby, Scott Morrison and Melissa Zhang for helpful conversations. The author would also like to thank Joan Birman for helpful comments on a draft of this paper.
 
\section{Background}\label{background}

\subsection{3-braids}

\begin{figure}

  \centering
    \begin{overpic}[width=0.05\textwidth]{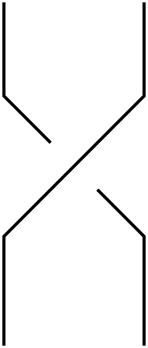}
    \put(-4,-20){$i$}
    \put(20,-20){$i+1$}
    \end{overpic}
      \caption{A local diagram for the generator $\sigma_i$}\label{sigmai}
\end{figure}
\begin{figure}

  \centering
    \includegraphics[width=0.3\textwidth]{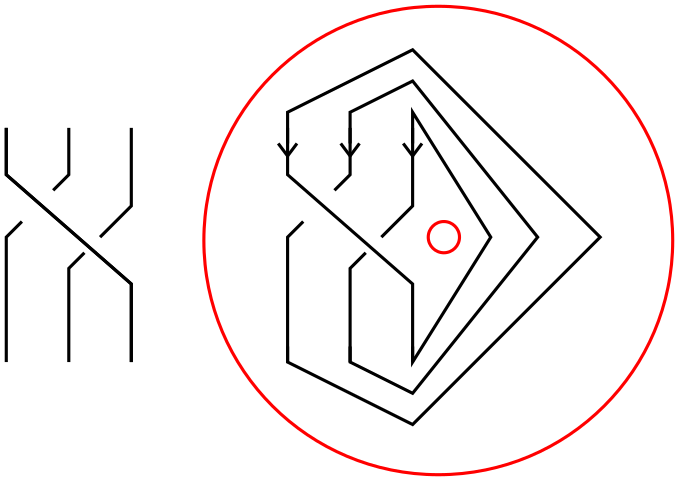}
      \caption{A braid and the annular closure of the braid equipped with the braid orientation}\label{annularclosure}
\end{figure}

A braid on $n$ strands is a proper embedding of $n$ disjoint copies of the interval $I = [0,1]$ into $D^2 \times [0,1]$ where each interval intersects every slice $D^2 \times t \in [0,1]$ in exactly one point up to isotopy through such embeddings. The collection of all braids on $n$ strands form a group $B_n$ with multiplication given by stacking braids. The group $B_n$ has the Artin presentation with generators $\sigma_i$ for $1 \leq i \leq n-1$. Diagrammatically, the generator $\sigma_i$ is the crossing in Figure~\ref{sigmai} between the $i$-th and $i+1$-st strands. The group $B_n$ can be identified with $\Mod(D_n)$ the mapping class group of the disk with $n$ punctures. See~\cite{birman_braids:_2005} for more about braids.

Given a braid $\beta$, there is a link $ \widehat{\beta}$ in the thickened annulus associated to $\beta$ called the annular closure of $\beta$. The braid closure $ \widehat{\beta}$ has an orientation called the braid orientation induced on it by the braid $\beta$ where all the strands of $\beta$ are oriented to flow from the top of the braid to the bottom. See Figure~\ref{annularclosure} for a diagram of the annular closure with the braid orientation. Two oriented annular braid closures $\hat{\beta_1}$ and $\hat{\beta_2}$ represent the same oriented annular link if and only if $\beta_1$ and $\beta_2$ are conjugate braids. All braid closures are assumed to be equipped with the braid orientation. 

A braid $\beta$ is said to be quasipositive if it can be expressed as a product of conjugates of positive generators, that is if $\beta = \prod w_i \sigma_{j_i} w_i^{-1}$ for some words $w_i\in B_n$. A braid $\beta$ is said to be right-veering if it sends every arc $\gamma$ running from $\partial D$ to one of the marked points to the right. See Section~2.2 of~\cite{plamenevskaya_braid_2018} for more details and a precise definition. All quasipositive braids are right-veering, but there are examples of right-veering braids which are not quasipositive. Some examples of this can be found, e.g.~in~\cite{plamenevskaya_braid_2018}. 

For each braid group $B_n$ there is a group homomorphism $w:B_n \to \mathbb{Z}$ given by $\sigma_i \to 1$ for every $i$. The image $w(\beta)$ is an invariant of the annular closure $ \widehat{\beta}$ called the writhe $w( \widehat{\beta})$. The mirror of a braid $\beta$ is the image of $\beta$ under the homomorphism $m: B_n \to B_n$ given by $\sigma_i \to \sigma_i^{-1}$ for every $i$. The annular closure $
\widehat{m(\beta)}$ is the topological mirror of the annular closure $\widehat{\beta}$. Note that $w(\widehat{m(\beta)}) = -w(\widehat{\beta})$.

Murasugi classified all $3$-braids up to conjugation as having exactly one of the following three forms.

\begin{lem}[Murasugi's classification of 3-braids \cite{murasugi_closed_1974}]

Every $3$-braid is conjugate to one of the following braids:
\begin{enumerate}
\item $h^d\sigma_1 \sigma_2^{-a_1} \cdots \sigma_1 \sigma_2^{-a_n}$ with $a_i \geq 0$ and some $a_i > 0$.
\item $h^d \sigma_2^m$ with $m \in \Z$.
\item $h^d \sigma_1^m \sigma_2^{-1}$ with $m \in \{ -1 ,-2, -3\}$.
\end{enumerate}
where $h = (\sigma_1 \sigma_2)^3$.
\end{lem}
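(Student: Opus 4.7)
The plan is to reduce the problem to a conjugacy classification in the quotient $B_3/Z(B_3)$, where the structure is particularly clean, and then lift the result back to $B_3$.

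First I would record the standard structural facts: the center $Z(B_3)$ is infinite cyclic, generated by $h = (\sigma_1\sigma_2)^3 = \Delta^2$ where $\Delta = \sigma_1\sigma_2\sigma_1$ is the half-twist, and the quotient $B_3/\langle h\rangle$ is isomorphic to the free product $\mathbb{Z}/2 * \mathbb{Z}/3 \cong PSL_2(\mathbb{Z})$, with $\bar\Delta$ generating the $\mathbb{Z}/2$ factor and $\overline{\sigma_1\sigma_2}$ generating the $\mathbb{Z}/3$ factor. Because $h$ is central, conjugacy in $B_3$ descends to, and lifts from, conjugacy in the quotient up to a residual central power, so it suffices to classify conjugacy classes in $\mathbb{Z}/2 * \mathbb{Z}/3$ and then record the leftover factor $h^d$.

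Next I would invoke the standard theory of conjugacy in free products: every nontrivial element of $\mathbb{Z}/2 * \mathbb{Z}/3$ has a cyclically reduced normal form alternating between the non-identity elements of the two factors, and two elements are conjugate if and only if their cyclically reduced forms differ by a cyclic permutation. For a generic cyclic word containing both $\bar\Delta$ and $\overline{\sigma_1\sigma_2}^{\,\pm 1}$, I would rotate so that each $\bar\Delta$ is grouped with a following $\overline{\sigma_1\sigma_2}^{\,\pm 1}$ block. Using the identity $(\sigma_1\sigma_2)^{-1}\Delta = \sigma_1$, which is immediate from $\Delta = \sigma_1\sigma_2\sigma_1$, each block can be lifted so that it starts with $\sigma_1$; inserting trivial factors $\sigma_2^{a}\sigma_2^{-a}$ then lets one rewrite the block as $\sigma_1 \sigma_2^{-a_i}$ with $a_i \geq 0$. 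Absorbing any excess into a single prefix $h^d$ yields form (1). Cyclic words with no $\bar\Delta$-letter are pure powers of $\overline{\sigma_1\sigma_2}$, whose lifts are $h^{d'}(\sigma_1\sigma_2)^k$ for small $k\in\{0,1,2\}$; after one conjugation by a Garside-type element and combining central factors these become $h^d \sigma_2^m$, giving form (2). The very short remaining cyclic words (one $\bar\Delta$-letter together with a single $\mathbb{Z}/3$-letter) give form (3).

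The main obstacle is the bookkeeping of central factors, particularly the restriction $m \in \{-1,-2,-3\}$ in form (3). I would verify this by checking that any $h^{d'}\sigma_1^m \sigma_2^{-1}$ with $m$ outside this range either absorbs an extra copy of $h^{\pm 1}$ into the prefix to land back in the range, or else contains enough positive $\sigma_1$'s (after a local braid relation) to reorganize into form (1). Parallel small-case checks are needed to confirm that form (2) and form (3) are genuinely necessary and cannot be absorbed into form (1). One must also confirm that conjugations used at the level of $\mathbb{Z}/2 * \mathbb{Z}/3$ lift to conjugations in $B_3$, which is automatic because conjugation by a central element is trivial, so any lift of the conjugator in the quotient realizes the desired conjugation in $B_3$ up to adjusting $d$. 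Once this finite set of reductions is completed, every 3-braid is exhibited as a conjugate of one of the three listed forms.
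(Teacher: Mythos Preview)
The paper does not prove this lemma at all; it is simply quoted from Murasugi's monograph as background. So there is no paper proof to compare against, and the question is whether your argument stands on its own.

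Your overall strategy---pass to $B_3/\langle h\rangle \cong \mathbb{Z}/2 * \mathbb{Z}/3 \cong PSL_2(\mathbb{Z})$, classify conjugacy classes there via cyclically reduced words in the free product, then lift and record the central factor $h^d$---is the standard and correct one. The structural facts you quote (center generated by $h$, the free-product description of the quotient, conjugacy in free products governed by cyclic reduction) are all right.

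However, your assignment of free-product conjugacy classes to Murasugi's three families is wrong, and this is a genuine gap rather than a bookkeeping issue. Writing $x=\bar\Delta$ and $y=\overline{\sigma_1\sigma_2}$:
\begin{itemize}
\item You claim family (2) comes from ``cyclic words with no $\bar\Delta$-letter.'' Such words are just $1,y,y^{2}$, all of finite order; their lifts are \emph{periodic} braids and belong to family (3) (together with the $m=0$ case of family (2)). They certainly do not account for $h^d\sigma_2^m$ with $m$ ranging over all of $\mathbb{Z}$.
\item Family (2) with $m\neq 0$ actually corresponds to infinite-order cyclically reduced words that use only one of the two nontrivial $\mathbb{Z}/3$-letters, i.e.\ powers of $xy$ or of $xy^{-1}$. (Indeed $\bar\sigma_1^{-1}=xy$ and $\bar\sigma_2=xy^{-1}$, and $\sigma_1$ is conjugate to $\sigma_2$ via $\Delta$.) These are the parabolic elements of $PSL_2(\mathbb{Z})$; your outline never isolates them, so most of family (2) is simply missing from your classification.
\item The ``very short'' words $xy^{\pm 1}$ that you assign to family (3) are infinite-order and belong to family (2) with $m=\pm 1$, not to family (3).
\item Family (1) corresponds to cyclically reduced words in which both $y$ and $y^{-1}$ occur (the hyperbolic/pseudo-Anosov case), and family (3) to the nontrivial finite-order classes $\{x\}$, $\{y\}$, $\{y^{-1}\}$.
\end{itemize}
In short, the correct trichotomy to use is elliptic/parabolic/hyperbolic in $PSL_2(\mathbb{Z})$, matching families (3)/(2)/(1) respectively; the trichotomy you wrote down does not line up with Murasugi's list. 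Once you redo the case analysis along these lines, the lifting and the bound $m\in\{-1,-2,-3\}$ in family (3) fall out from computing which finite-order class each $\bar\sigma_1^{m}\bar\sigma_2^{-1}$ represents.
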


In what follows, we will refer to the three different forms for a 3-braid listed above as families 1, 2, and 3 respectively.

\begin{rem}\label{Xu}
Since the time since Murasugi published his classification of 3-braids, much more has been written about the structure of 3-braids. Specifically, work of Xu provided a new solution to the conjugacy problem for 3-braids and a solution to the shortest word problem for 3-braids~\cite{xu_genus_1992}. Xu's approach also generalizes to solving the conjugacy problem for $n$-braids. The reason why we use Murasugi's approach in what follows is because Murasugi's classification is connected to properties of the of $\beta \in \Mod(D_3)$ and it was previously known that the $d_t$ invariants of braid closures are connected to certain properties of the conjugacy class of $\beta$ viewed as a mapping class, for example if the conjugacy class is right-veering. A summary of Xu's solution to the conjugacy problem for 3-braids and other more recent knowledge about 3-braids can be found in~\cite{birman_note_2008}.
\end{rem}

\subsection{Braid closure invariants from Khovanov homology}

Given a diagram of an annular braid closure $ \widehat{\beta}$, we will build two $\mathbb{Z}\oplus \mathbb{Z}$ filtered chain complexes using a constructions of Khovanov~\cite{khovanov_categorification_2000} and Lee~\cite{lee_endomorphism_2005} along with ideas from J.~Rasmussen~\cite{rasmussen_khovanov_2010}, Asaeda-Przytycki-Sikora~\cite{asaeda_categorification_2004}, and L.~Roberts~\cite{roberts_knot_2013}. Given a braid closure $ \widehat{\beta}$, we can form the cube of resolutions of $ \widehat{\beta}$ as in Section~4.2 of~\cite{khovanov_categorification_2000}. This cube of resolutions is then assigned a triply graded vector space $C^{i,j,k}( \widehat{\beta})$. The $i,j,$ and $k$ gradings are the homological, quantum, and annular gradings respectively. This triply graded vector space can be equipped with a differentials $\partial$ and $\Phi$ so that the homology with respect to $\partial$ and also with respect to $\partial + \Phi$ are called the Khovanov homology $\Kh^{i,j}( \widehat{\beta})$ and the Lee homology $\Lee^i( \widehat{\beta})$ respectively. Both homologies are invariants of the oriented link $L \subseteq S^3$ represented by $ \widehat{\beta}$. Because $\partial$ preserves the $j$ grading, Khovanov homology is bi-graded with $i$ and $j$ gradings and filtered by $j - 2k$.  Lee homology is $i$ graded and $\mathbb{Z} \oplus\mathbb{Z}$ filtered by $j$ and $j - 2k$. The $\mathbb{Z} \oplus\mathbb{Z}$ filtered chain homotopy type of Lee homology is an invariant of the annular closure of $ \widehat{\beta}$. The chain complex for Lee homology decomposes into two subcomplexes $L_1$ and $L_2$ depending on the quantum grading of vectors mod~4.

The vector space $C^{i,j,k}( \widehat{\beta})$ has as a basis distinguished generators, each of which is homogeneous with respect to each grading. A complete resolution of $ \widehat{\beta}$ is a choice of smoothing for each crossing of $ \widehat{\beta}$. A complete resolution consists of a collection of circles in the annulus and some wrap around the puncture of the annulus while others will not. Associated to each vertex of the cube of resolutions of $ \widehat{\beta}$ is a complete resolution. Generators of $C^{i,j,k}( \widehat{\beta})$ are labelings of each circle of this complete resolution with either a $+$ sign or a $-$ sign. Each circle with a $+$ increases the quantum grading by one and each circle with a $-$ decreases the quantum grading by one. The quantum grading is then shifted depending on the resolution that is labeled. For the annular grading, only circles that wrap around the annulus are counted, among these circles those that are labeled with a $+$ increases the annular grading by one and each circle with a $-$ decreases the annular grading by one. For an $n$ braid, the annular grading of any generator is in the set $\{ -n , -n + 2, \ldots , n - 2, n \}$. The homological grading is defined in such a way that if $ \widehat{\beta}$ has braid orientation $o$ then the oriented resolution coming from $o$ sits in homological grading $0$.

Lee showed that if $ \widehat{\beta}$ is a link with $m$ components then $\Lee( \widehat{\beta})$ has dimension $2^m$ and for each possible orientation $o$ of $ \widehat{\beta}$ constructed an explicit cycle $\mathfrak{s}_o$ so that the homology classes of these cycles generate $\Lee( \widehat{\beta})$~\cite{lee_endomorphism_2005}. The class $\mathfrak{s}_0$ is a linear combination of distinguished generators in the oriented resolution coming from $o$. Each possible labeling of the resolution appears in the linear combination with coefficient $\pm 1$ depending on the orientation and which circles are labeled with a $+$ and which are labeled with $-$. The reader can refer to~\cite{rasmussen_khovanov_2010} for a detailed description of how to construct the cycle $\mathfrak{s}_o$. 

Using Lee homology, J.~Rasmussen defined the $s$ invariant for knots in $S^3$ and his definition was extended to oriented links in $S^3$ by Beliakova and Wehrli~\cite{rasmussen_khovanov_2010}~\cite{beliakova_categorification_2008}. We recall some basic properties of this invariant. For a braid closure $ \widehat{\beta}$ with its braid orientation $o$ there is a non-zero homology class $[\mathfrak{s}_o]$ in $\Lee^0( \widehat{\beta})$. It follows immediately from the definition~(Definition~3.4~of~\cite{rasmussen_khovanov_2010}) that $s( \widehat{\beta}) = \text{max}\{ gr_{j}(x) \mid [x] = [\mathfrak{s}_o] \} +1$ where $gr_{j}$ is the filtered $j$ degree. The $s$ invariant gives bounds on the four ball genus of knots and links and is a group homomorphism from the smooth knot concordance group to $\mathbb{Z}$~\cite{rasmussen_khovanov_2010}~\cite{beliakova_categorification_2008}. 
\begin{prop}[Proposition~3.9 of~\cite{rasmussen_khovanov_2010}]\label{mirror} If $K$ is a knot then $s(m(K)) = - s(K)$.
\end{prop}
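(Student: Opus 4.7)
The plan is to exploit a natural duality between the Khovanov-Lee chain complex of $K$ and that of its mirror $m(K)$, then read the $s$-invariant off the dual complex. Mirroring reverses every crossing, which swaps the $0$- and $1$-resolutions throughout the cube of resolutions. Combined with the fact that the Lee Frobenius algebra $\mathbb{Q}[X]/(X^2-1)$ carries a nondegenerate trace pairing under which multiplication and comultiplication are mutually adjoint, this assembles into a nondegenerate chain-level pairing
$$\langle \cdot, \cdot \rangle : C^{i,j}(K) \otimes C^{-i,-j}(m(K)) \to \mathbb{Q}$$
between the Khovanov-Lee complexes, under which the two Lee differentials are mutually adjoint and nonzero pairings force $j$-filtration levels on the two sides to be negatives of each other.

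Next, since $K$ is a knot, $\Lee(K)\cong \mathbb{Q}^2$ is generated by $[\mathfrak{s}_o]$ and $[\mathfrak{s}_{\bar{o}}]$, both in homological degree $0$. The structure theorem for finite-dimensional $\mathbb{Z}$-filtered chain complexes over $\mathbb{Q}$ then decomposes $(C^0(K),\partial+\Phi)$, up to filtered chain homotopy, as an acyclic summand plus two surviving one-dimensional summands whose $j$-filtration levels differ by $2$. By the definition of $s$ recalled in the excerpt and the standard filtered analysis of the combinations $[\mathfrak{s}_o\pm\mathfrak{s}_{\bar{o}}]$, these two levels are exactly $s(K)-1$ and $s(K)+1$, and $[\mathfrak{s}_o]$ realizes $s(K)-1$. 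Dualizing via $\langle\cdot,\cdot\rangle$ negates filtration levels, so the two surviving summands of $C^0(m(K))$ sit at $j$-filtrations $-s(K)+1$ and $-s(K)-1$.

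To finish, I would identify where $[\mathfrak{s}_o(m(K))]$ lives in this dual decomposition. Working in the idempotent basis $\mathbf{a}=1+X$, $\mathbf{b}=1-X$ of $\mathbb{Q}[X]/(X^2-1)$, a direct computation of the Frobenius pairing shows that $\mathfrak{s}_o(K)$ pairs nontrivially with $\mathfrak{s}_{\bar{o}}(m(K))$ (and vice versa), so $[\mathfrak{s}_o(m(K))]$ is nonzero. Running the same filtered analysis of $[\mathfrak{s}_o\pm\mathfrak{s}_{\bar o}]$ on the dual side, $[\mathfrak{s}_o(m(K))]$ realizes the lower surviving level $-s(K)-1$, and the definition of $s$ then yields $s(m(K))=-s(K)$. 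The main obstacle will be the careful setup of the duality: one must track the $n_{\pm}$-dependent homological and quantum shifts in Khovanov's convention (which swap under mirroring), pin down signs in the Frobenius pairing, and verify that the canonical Lee cycles pair as claimed. Once that bookkeeping is in place, the filtered structure theorem delivers the conclusion at once.
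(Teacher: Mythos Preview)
The paper does not supply its own proof of this statement: it is recorded as background with a citation to Rasmussen~\cite{rasmussen_khovanov_2010}, and no argument is given beyond the attribution. So there is nothing in the present paper to compare your proposal against.

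That said, your sketch is essentially Rasmussen's original argument. The duality $C^{i,j}(K)\cong \left(C^{-i,-j}(m(K))\right)^*$ coming from swapping resolutions and the Frobenius trace is exactly what he invokes; the fact that the two surviving generators of $\Lee(K)$ sit at filtration levels $s(K)\pm 1$ is his Lemma~3.5/Corollary~3.6; and the conclusion that dualizing negates these levels is his Proposition~3.9. Your added care about which canonical class pairs with which (namely $\mathfrak{s}_o(K)$ with $\mathfrak{s}_{\bar o}(m(K))$) is correct but in fact unnecessary for the final step: once you know the two surviving filtration levels on the mirror side are $-s(K)\pm 1$, the definition $s = s_{\min}+1 = s_{\max}-1$ forces $s(m(K))=-s(K)$ regardless of which of $[\mathfrak{s}_o]$, $[\mathfrak{s}_{\bar o}]$ realizes which level. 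The bookkeeping you flag (grading shifts under $n_\pm \leftrightarrow n_\mp$, signs in the pairing) is genuine but routine, and your outline handles it appropriately.
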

\begin{prop}[Lemma~3.5 of~\cite{rasmussen_khovanov_2010}]\label{subcomplex} For an oriented link $L$, the homology class $[\mathfrak{s}_o]$ is non-zero in both subcomplexes of $\Lee(L)$.
\end{prop}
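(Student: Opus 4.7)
The plan is to combine Lee's basis theorem with a direct symmetry between $\mathfrak{s}_o$ and the canonical cycle $\mathfrak{s}_{\bar{o}}$ associated to the total orientation reversal $\bar{o}$ of $L$. First, because both the Khovanov differential $\partial$ and the Lee perturbation $\Phi$ shift the quantum grading by $0$ or $\pm 4$, the Lee differential $\partial + \Phi$ respects the decomposition $\Lee(L) = L_1 \oplus L_2$ by quantum grading modulo $4$. Writing $\mathfrak{s}_o = \mathfrak{s}_o^{(1)} + \mathfrak{s}_o^{(2)}$ with $\mathfrak{s}_o^{(i)} \in L_i$, each summand is therefore individually a cycle, and the task is to show that $[\mathfrak{s}_o^{(1)}]$ and $[\mathfrak{s}_o^{(2)}]$ are both non-zero.

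Next I would compare $\mathfrak{s}_o$ with $\mathfrak{s}_{\bar{o}}$ directly. Reversing every component of $L$ does not alter the oriented resolution, so both canonical cycles live on the same vertex of the cube of resolutions and can be compared term-by-term. In Lee's basis $a = v_+ + v_-$, $b = v_+ - v_-$, the construction of $\mathfrak{s}_o$ decorates each circle of the oriented resolution with either $a$ or $b$ according to an orientation-dependent rule, and passing from $o$ to $\bar{o}$ toggles this rule on every circle. Expanding in the $\{v_+, v_-\}$ basis then shows that the coefficient in $\mathfrak{s}_{\bar{o}}$ of a distinguished generator $v_{\epsilon_1} \otimes \cdots \otimes v_{\epsilon_c}$ equals $(-1)^{\#\{i\,:\,\epsilon_i = -\}}$ times the coefficient of the same generator in $\mathfrak{s}_o$. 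Because each $v_-$ drops the quantum grading by $2$ relative to $v_+$, this parity is precisely the invariant that separates $L_1$ from $L_2$, yielding $\mathfrak{s}_{\bar{o}}^{(1)} = \mathfrak{s}_o^{(1)}$ and $\mathfrak{s}_{\bar{o}}^{(2)} = -\mathfrak{s}_o^{(2)}$ (with the two labels possibly swapped).

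By Lee's theorem, the classes $[\mathfrak{s}_o]$ and $[\mathfrak{s}_{\bar{o}}]$ are distinct basis elements of $\Lee(L)$, and hence linearly independent. Solving the two identities above produces $[\mathfrak{s}_o^{(1)}] = \tfrac{1}{2}([\mathfrak{s}_o] + [\mathfrak{s}_{\bar{o}}])$ and $[\mathfrak{s}_o^{(2)}] = \tfrac{1}{2}([\mathfrak{s}_o] - [\mathfrak{s}_{\bar{o}}])$, both of which are non-zero by linear independence. The main obstacle in this plan is the sign bookkeeping in the middle step: one must check carefully that the orientation-dependent $a/b$ labeling rule genuinely toggles under total orientation reversal and that the resulting parity of $v_-$ labels really does line up with the $L_1/L_2$ splitting once the oriented-resolution quantum grading shift is taken into account. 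Once that identification is in place, the conclusion is immediate from Lee's basis theorem.
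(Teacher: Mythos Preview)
The paper does not supply its own proof of this proposition; it is quoted verbatim as Lemma~3.5 of Rasmussen's paper and used as a black box. Your argument is correct and is precisely the proof Rasmussen gives: one observes that the Lee differential respects the mod~$4$ quantum splitting, that $\mathfrak{s}_{\bar{o}}$ is obtained from $\mathfrak{s}_o$ by the involution $v_- \mapsto -v_-$ on each tensor factor (equivalently $a \leftrightarrow b$), and hence that the projections of $\mathfrak{s}_o$ to $L_1$ and $L_2$ are $\tfrac{1}{2}(\mathfrak{s}_o \pm \mathfrak{s}_{\bar{o}})$, which are non-zero by Lee's basis theorem. The sign bookkeeping you flag as the ``main obstacle'' is straightforward: since the $j$-grading of a labeling of the oriented resolution is $c - 2\#\{-\}$ plus a fixed shift, the parity of $\#\{-\}$ does indeed distinguish the two subcomplexes, and the $a/b$ rule (which depends on the orientation of each circle relative to a fixed checkerboard coloring) genuinely toggles under global orientation reversal. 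One implicit assumption worth making explicit is that coefficients lie in a field of characteristic not equal to $2$, so that dividing by $2$ is legitimate; this is the standard setting for Lee homology and the $s$-invariant.
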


By considering the additional $j-2k$ filtration on Lee homology, Grigsby, A.~Licata, and Wehrli extended the $s$ invariant to a family of annular braid closure invariants $d_t$ one for each $t$ in the interval $[0,2]$, this family of invariants is also called the annular Rasumssen invariants~\cite{grigsby_annular_2017}. For each $t \in [0,2]$ the Lee homology has a filtration $j_t = j - t k $ so for each $t$ it is possible to define $d_t( \widehat{\beta})= \text{max}\{ j_t(x) \mid [x] = [\mathfrak{s}_o] \}$. From the definition it is immediate that $d_0( \widehat{\beta}) = s( \widehat{\beta}) - 1$. After defining the invariant, they show that it has a symmetry $d_{1-t}( \widehat{\beta}) = d_{1+ t}( \widehat{\beta})$~\cite{grigsby_annular_2017}. Because of the symmetry, in the rest of this paper we restrict to the interval $[0,1]$ without losing information about the invariant $d_t$. We recall key properties of the $d_t$ invariant needed for the present work.
\begin{prop}[Theorem~1 of~\cite{grigsby_annular_2017}] The $d_t$ invariant is a piecewise linear function on $[0,1]$ and the right-hand slope of $d_t( \widehat{\beta})$, called $m_t( \widehat{\beta})$, at any $t \in [0,1]$ is the negative $k$ grading of the cycle~$x$ with $[x] = [\mathfrak{s}_o]$ and $j_t(x) = d_t( \widehat{\beta})$.\end{prop}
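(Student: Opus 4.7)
The plan is to regard $d_t$ as the pointwise maximum of an explicit finite family of linear functions of $t$, from which piecewise linearity and the slope formula both fall out.

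First I would fix a braid closure $\widehat{\beta}$ and, for each cycle $x$ representing the class $[\mathfrak{s}_o]\in\Lee^0(\widehat{\beta})$, record the pair $(j(x),k(x))$ of filtered $j$- and annular gradings. Because the Khovanov chain complex is finite dimensional and both $j$ and $k$ take only finitely many values on generators, the set
\[
S = \{(j(x),k(x)) : x \text{ is a cycle with } [x]=[\mathfrak{s}_o]\}
\]
is a finite subset of $\mathbb{Z}\oplus\mathbb{Z}$. For each pair $(j,k)\in S$ write $\ell_{j,k}(t) = j - tk$, a genuine linear function on $[0,1]$. Then by definition
\[
d_t(\widehat{\beta}) = \max\{\, j - tk : (j,k)\in S\,\} = \max_{(j,k)\in S}\ell_{j,k}(t).
\]

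Piecewise linearity is then a soft fact: the pointwise maximum of finitely many linear functions on a compact interval is a continuous, piecewise linear function with finitely many breakpoints, and the breakpoints occur exactly where two of the lines $\ell_{j,k}$ and $\ell_{j',k'}$ cross while both simultaneously achieve the maximum. Between consecutive breakpoints the maximum is realized by a single linear function $\ell_{j,k}$, whose slope is $-k$.

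For the right-hand slope at a point $t_0\in[0,1]$, I would argue as follows. Since $S$ is finite, there exists $\varepsilon>0$ such that on $[t_0, t_0+\varepsilon]$ the maximum is realized by a single pair $(j^*,k^*)\in S$, meaning $d_t(\widehat{\beta}) = j^* - t k^*$ for every $t$ in this interval. Choose a cycle $x$ with $(j(x),k(x)) = (j^*,k^*)$ and $[x]=[\mathfrak{s}_o]$. By construction $j_{t_0}(x) = j^* - t_0 k^* = d_{t_0}(\widehat{\beta})$, and the right-hand slope $m_{t_0}(\widehat{\beta})$ equals the slope of $\ell_{j^*,k^*}$, which is $-k^* = -k(x)$, exactly the claimed formula.

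The one subtlety I expect to need to handle carefully is the identification of $S$ with a genuinely finite set: a priori one could worry about cycles with arbitrarily negative $j$ or $k$ (these are perfectly allowed as representatives of $[\mathfrak{s}_o]$), but only the pairs $(j,k)$ on the upper envelope contribute to the maximum, and this upper envelope is supported on the finite set of $(j,k)$ values realized by homogeneous elements of the finite dimensional chain complex. Once this is clarified, the rest is convex-geometry bookkeeping and there is no serious obstacle.
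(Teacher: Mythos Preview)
The paper does not actually prove this proposition; it is quoted as background from \cite{grigsby_annular_2017} (following Livingston's treatment of $\Upsilon$), so there is no in-paper proof to compare against. That said, your approach is the standard one and is essentially what appears in those references, with one genuine gap.

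The gap is in the sentence ``for each cycle $x$ representing $[\mathfrak{s}_o]$, record the pair $(j(x),k(x))$ \ldots then $d_t = \max_{(j,k)\in S}(j-tk)$.'' A cycle $x$ representing $[\mathfrak{s}_o]$ is almost never $(j,k)$-bihomogeneous (indeed $\mathfrak{s}_o$ itself is a signed sum over all labelings of the oriented resolution), so it does not have a single pair $(j(x),k(x))$. The filtered degree $j_t(x)$ of such an $x$ is $\min_i\bigl(j(x_i)-tk(x_i)\bigr)$ taken over its bihomogeneous pieces $x_i$, which is a concave piecewise linear function of $t$, not a single line. In particular your displayed identity $d_t=\max_{(j,k)\in S}\ell_{j,k}(t)$ is false as written: the right-hand side would allow you to mix the $j$-degree of one representative with the $k$-degree of another.

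The fix is exactly what the references do: observe that $t\mapsto j_t(x)$ is a minimum of finitely many lines (one per bihomogeneous component of $x$), and that although there are infinitely many cycles $x$, only finitely many distinct such functions arise because the chain complex is finite dimensional. Then $d_t$ is a maximum of finitely many concave piecewise linear functions, hence piecewise linear. For the slope statement, one argues that just to the right of $t_0$ a single bihomogeneous component $x_{i^*}$ of a maximizing cycle $x$ realizes both $j_t(x)$ and $d_t$, and $m_{t_0}=-k(x_{i^*})$; this is what the paper means by ``the cycle $x$'' determining the slope. Your final paragraph gestures at this issue but does not resolve it: the problem is not cycles with arbitrarily negative $j$ or $k$, it is that $j_t$ of a fixed cycle is not linear in $t$.
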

\begin{prop}[Theorem~3 of~\cite{grigsby_annular_2017}] Let $ \widehat{\beta}$ be a braid closure with writhe $w( \widehat{\beta})$, then $d_1( \widehat{\beta}) = w( \widehat{\beta})$.\end{prop}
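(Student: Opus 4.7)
The plan is to prove both inequalities $d_1(\widehat{\beta}) \ge w(\widehat{\beta})$ and $d_1(\widehat{\beta}) \le w(\widehat{\beta})$ by computing the $(j-k)$-gradings of $\mathfrak{s}_o$ directly on the oriented resolution. Write $\beta$ as a braid word with $n_+$ positive and $n_-$ negative crossings, so $w(\widehat{\beta}) = n_+ - n_-$. The oriented resolution of $\widehat{\beta}$ consists of exactly $n$ disjoint circles, one for each strand, and crucially every one of these circles wraps once around the annulus. A generator obtained by labeling $p$ of these circles with $v_+$ and the remaining $n-p$ with $v_-$ satisfies
\[ k = 2p - n, \qquad j = (2p - n) + n_- + (n_+ - 2n_-) = (2p - n) + w(\widehat{\beta}), \]
so $j - k = w(\widehat{\beta})$ identically on the oriented resolution. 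Since every summand of $\mathfrak{s}_o$ lies in this resolution, the filtered degree $j_1(\mathfrak{s}_o) = w(\widehat{\beta})$, which gives the lower bound $d_1(\widehat{\beta}) \geq w(\widehat{\beta})$.

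For the upper bound, I would first verify that $\partial + \Phi$ respects the decreasing filtration $F^p = \{x : j(x) - k(x) \ge p\}$. A case check on the components of $\partial$ (which preserves $j$ and either preserves or decreases $k$ by $2$) and of $\Phi$ (which raises $j$ by $4$ and either preserves or raises $k$ by $2$) shows that every piece weakly raises $j - k$, so $F^{\bullet}$ is a filtration by subcomplexes and the filtered degree is well-defined on Lee homology. The upper bound is then the statement that $[\mathfrak{s}_o]$ admits no representative in $F^{w(\widehat{\beta})+1}$. I would check this via the spectral sequence of $F^{\bullet}$: the $E_0$-differential at level $w(\widehat{\beta})$ is just the annular-grading-preserving component $\partial_0$ of the Khovanov differential, and the image of $\mathfrak{s}_o$ in $E_1^{w(\widehat{\beta})}$ is the annular analogue of a Lee generator for $n$ unlinked wrapping circles, which is nonzero by the same mechanism underlying Proposition~\ref{subcomplex}.

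The main obstacle is this final spectral-sequence non-vanishing step. The subtle point is that $E_0^{w(\widehat{\beta})}$ can receive contributions from non-oriented resolutions that happen to share this $j_1$-value, so the argument must localize cleanly to the oriented-resolution summand where the Lee-generator computation applies. I expect the cleanest route is to combine an explicit description of the $\partial_0$-homology of the oriented resolution, which produces the expected copies of a top-annular-grading class, with the non-triviality of $\mathfrak{s}_o$ in each of the two Lee subcomplexes $L_1$ and $L_2$ guaranteed by Proposition~\ref{subcomplex}, forcing the leading $(j-k)$-graded piece of $[\mathfrak{s}_o]$ to lie in filtered degree exactly $w(\widehat{\beta})$.
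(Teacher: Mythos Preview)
This proposition is not proved in the present paper; it is quoted from \cite{grigsby_annular_2017} as background, so there is no in-paper argument to compare yours against.

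Your lower bound is correct and complete. The oriented resolution of an $n$-braid closure consists of $n$ essential circles, and your grading computation showing that every labeling has $j-k=w(\widehat{\beta})$ is right, so $j_1(\mathfrak{s}_o)=w(\widehat{\beta})$ and hence $d_1(\widehat{\beta})\ge w(\widehat{\beta})$.

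The upper bound, however, has a genuine gap which you partly flag but do not close. First, non-vanishing on $E_1$ is asserted rather than proved: the $E_1$ page of your $(j-k)$-filtration is (a regrading of) annular Khovanov homology, and showing that $\mathfrak{s}_o$ represents a nonzero class there is itself a nontrivial statement about the annular theory, not a consequence of Proposition~\ref{subcomplex}, which concerns only the $j$-filtration. Second, and more seriously, $E_1$ non-vanishing would not suffice even if established. There exist generators in homological grading $-1$ with $j-k<w(\widehat{\beta})$ (already for $\widehat{\sigma_1^{-1}}$ one finds $j-k=-3<w=-1$), so higher differentials $d_r\colon E_r^{w-r}\to E_r^{w}$ can in principle hit your class and produce a representative of $[\mathfrak{s}_o]$ strictly deeper in the filtration. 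You need survival to $E_\infty$, and that is precisely the substantive content of the result. The proof in \cite{grigsby_annular_2017} sidesteps this spectral-sequence chase by using the $\mathfrak{sl}_2$-module structure on annular Khovanov--Lee homology: the braid-oriented Lee class is identified with an explicit extremal-weight vector whose bifiltered degree can be read off directly, rather than bounded through successive approximations.
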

\begin{prop}[Theorem~1 of~\cite{grigsby_annular_2017}]\label{cobordism} Given a cobordism $S$ with $n$ odd index critical points and no even index critical points between two braid closures $ \widehat{\beta}$ and $ \widehat{\beta'}$ then the cobordism gives a bound on the difference of their $d_t$ invariants $\vert d_t( \widehat{\beta}) - d_t( \widehat{\beta'}) \vert \leq n$.\end{prop}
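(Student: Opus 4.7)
The plan is to decompose $S$ via a Morse-theoretic movie presentation and track the chain-level effect of each elementary piece on the $j_t = j - tk$ filtration of the Khovanov-Lee complex, then apply this to an optimal cycle representative of the Lee canonical generator.

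First, since $S$ has no index $0$ or index $2$ critical points, a generic height function exhibits $S$ as a composition of $n$ elementary saddle cobordisms $S_1, \ldots, S_n$, with each $S_i$ the identity cobordism outside a small disk where a single $1$-handle is attached. Each $S_i$ induces a chain map $F_i$ on the triply-graded Khovanov-Lee complex given locally by a merge $m$ or split $\Delta$; the total cobordism map is $F_S = F_n \circ \cdots \circ F_1$.

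Second, I would verify the key local estimate that each $F_i$ shifts the $j_t$ filtration by at most $1$ in absolute value for every $t \in [0,1]$. On the quantum grading $j$, every saddle is known to shift by $-1$. On the annular grading $k$, the effect depends on a case analysis of the local configurations: both arcs lying on trivial (non-wrapping) circles, on nontrivial (wrapping) circles, or on a mix, and whether the saddle merges or splits. In each case one checks that the nonzero contributions to $F_i$ shift $k$ by an amount which, combined with the $-1$ shift on $j$, keeps $|j_t(F_i(x)) - j_t(x)| \leq 1$ for $t \in [0,1]$. Iterating over the $n$ saddles gives $|j_t(F_S(x)) - j_t(x)| \leq n$ on any distinguished generator, and hence on any filtered cycle representative. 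Next, I would show that $F_S$ sends $[\mathfrak{s}_o]$ to a nonzero class on $\widehat{\beta'}$: decomposing $\Lee(\widehat{\beta})$ into the subcomplexes $L_1, L_2$ indexed by quantum grading mod $4$ and applying Proposition~\ref{subcomplex}, the class $[\mathfrak{s}_o]$ is nonzero in each subcomplex, and Rasmussen's functoriality (after possibly changing orientation on components at the other end of $S$) identifies $F_S([\mathfrak{s}_o])$ with a nonzero scalar multiple of some $[\mathfrak{s}_{o'}]$ on $\widehat{\beta'}$. Choosing a cycle $x$ with $[x] = [\mathfrak{s}_o]$ and $j_t(x) = d_t(\widehat{\beta})$, the image $F_S(x)$ is then a cycle representative of $\pm[\mathfrak{s}_{o'}]$ with $j_t(F_S(x)) \geq d_t(\widehat{\beta}) - n$, so $d_t(\widehat{\beta'}) \geq d_t(\widehat{\beta}) - n$. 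Running $S$ upside down gives the reverse inequality.

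The main obstacle I expect is the case analysis for the per-saddle $j_t$ shift: while it is standard that saddles shift quantum grading by $-1$, the effect on the annular grading $k$ is genuinely local and has several subcases (in particular, merging two wrapping circles of opposite orientations yields a non-wrapping circle and can shift $k$ by $0$, $\pm 1$, or $\pm 2$ depending on the labels, and one must argue that the problematic labelings do not contribute to the chain map nontrivially). A secondary subtlety is that the braid orientation need not extend across $S$, so care is needed in selecting the orientation $o'$ on $\widehat{\beta'}$ to which Rasmussen's functoriality applies.
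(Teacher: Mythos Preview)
The paper does not give its own proof of this proposition; it is quoted as Theorem~1 of \cite{grigsby_annular_2017} and used as a black box throughout. There is therefore nothing in the present paper to compare your argument against.

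That said, your outline is essentially the argument carried out in \cite{grigsby_annular_2017}: factor $S$ into elementary saddles, verify that each saddle map on the annular Lee complex is filtered of degree at least $-1$ with respect to $j_t$ for every $t\in[0,2]$, invoke Rasmussen--Beliakova--Wehrli functoriality to see that $[\mathfrak{s}_o]$ is sent to a nonzero class, and read off the inequality (the reverse coming from turning $S$ upside down). Your identification of the technical heart---the case analysis of the $(j,k)$-bidegree of a saddle according to whether the participating circles are essential or trivial in the annulus---is accurate; that analysis is exactly where the work lies in \cite{grigsby_annular_2017}. Both obstacles you anticipate (the per-saddle filtration estimate and the orientation compatibility for the canonical class) are genuine and are addressed in that reference, so your plan is sound.
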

\begin{prop}[Proposition~4 of~\cite{grigsby_annular_2017}]\label{DisjointUnion}
If $\beta_0$ and $\beta_1$ are braids and $\beta_2$ is braid that is the disjoint union of $\beta_0$ and $\beta_1$ then $d_t(\widehat{\beta_2}) = d_t(\widehat{\beta_1}) + d_t(\widehat{\beta_2})$.
\end{prop}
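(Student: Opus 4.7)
The plan is to show that the annular Khovanov--Lee complex of a disjoint union splits as a tensor product of the complexes of the two pieces in a filtration-preserving way, to identify the canonical Lee cycle of the disjoint union with the tensor product of the canonical Lee cycles of the two factors, and then to deduce the equality $d_t(\widehat{\beta_2}) = d_t(\widehat{\beta_0}) + d_t(\widehat{\beta_1})$ from a filtered K\"unneth argument.

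First I would verify the diagrammatic identification. Since $\beta_2$ is the split braid placing $\beta_0$ on its first $n_0$ strands and $\beta_1$ on its remaining $n_1$ strands, the annular diagram for $\widehat{\beta_2}$ is the nested disjoint union of the annular diagrams for $\widehat{\beta_0}$ and $\widehat{\beta_1}$. Consequently a complete resolution of $\widehat{\beta_2}$ is a pair of complete resolutions of the two factors, its circles are the disjoint union of the circles of the two factors, and a circle wraps around the puncture in the combined resolution if and only if the corresponding circle wraps around in its factor. The writhe normalizations defining the $i$ and $j$ gradings are additive since $n_\pm(\beta_2)=n_\pm(\beta_0)+n_\pm(\beta_1)$, so all three of $i$, $j$, $k$ add under the natural bijection of generators. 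Because the two pieces share no crossings, the differentials $\partial$ and $\Phi$ (which are defined locally at each crossing) satisfy the Leibniz rule with respect to this decomposition, and the bijection assembles into a $\mathbb{Z}\oplus\mathbb{Z}$-filtered chain isomorphism $C(\widehat{\beta_2})\cong C(\widehat{\beta_0})\otimes C(\widehat{\beta_1})$. The Lee cycle $\mathfrak{s}_o$ is a signed sum over labelings of the oriented resolution where each circle receives a label determined by a local rule involving only that circle and its orientation data; this rule factors across a nested disjoint union, so under the isomorphism above $\mathfrak{s}_{o_2}$ corresponds to $\mathfrak{s}_{o_0}\otimes \mathfrak{s}_{o_1}$.

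The inequality $d_t(\widehat{\beta_2})\geq d_t(\widehat{\beta_0})+d_t(\widehat{\beta_1})$ follows immediately by tensoring optimal cycles: if $x_i$ is a cycle with $[x_i]=[\mathfrak{s}_{o_i}]$ and $j_t(x_i)=d_t(\widehat{\beta_i})$, then $x_0\otimes x_1$ is a cycle representing $[\mathfrak{s}_{o_2}]$ whose $j_t$-filtration equals $j_t(x_0)+j_t(x_1)$. The main obstacle is the reverse inequality, since an arbitrary cycle representing $[\mathfrak{s}_{o_2}]$ need not be a pure tensor. To handle this I would invoke a filtered K\"unneth argument over $\mathbb{Q}$: each $C(\widehat{\beta_i})$ is filtered chain homotopy equivalent to its Lee homology endowed with the filtration $d_t$, obtained by iteratively cancelling filtered acyclic pairs of generators in increasing order of filtration. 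Tensoring the two filtered retractions gives a filtered retraction of $C(\widehat{\beta_2})$ onto $\Lee(\widehat{\beta_0})\otimes\Lee(\widehat{\beta_1})\cong\Lee(\widehat{\beta_2})$ under which $[\mathfrak{s}_{o_2}]$ maps to $[\mathfrak{s}_{o_0}]\otimes[\mathfrak{s}_{o_1}]$, and any representative $x$ of $[\mathfrak{s}_{o_2}]$ satisfies $j_t(x)\leq d_t(\widehat{\beta_0})+d_t(\widehat{\beta_1})$. Combining the two inequalities yields the desired equality.
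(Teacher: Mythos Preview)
The paper does not give its own proof of this proposition; it is quoted verbatim as Proposition~4 of \cite{grigsby_annular_2017} and used as a black box. There is therefore nothing in this paper to compare your argument against.

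That said, your outline is the standard route to this result and is essentially correct. The tensor product identification of the annular Khovanov--Lee complex for a split link, the additivity of the $(i,j,k)$ gradings, and the factorization $\mathfrak{s}_{o_2}=\mathfrak{s}_{o_0}\otimes\mathfrak{s}_{o_1}$ are all valid and are exactly what is used in \cite{grigsby_annular_2017}. One small imprecision: when you write that each $C(\widehat{\beta_i})$ is ``filtered chain homotopy equivalent to its Lee homology endowed with the filtration $d_t$,'' you mean the induced $j_t$-filtration on homology (of which $d_t$ is the single value on the class $[\mathfrak{s}_{o_i}]$); with that reading, the filtered K\"unneth step goes through over $\mathbb{Q}$, since one can choose filtered bases on each factor and verify that the pure tensor $[\mathfrak{s}_{o_0}]\otimes[\mathfrak{s}_{o_1}]$ has $j_t$-filtration exactly $d_t(\widehat{\beta_0})+d_t(\widehat{\beta_1})$. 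Also note that the statement in the paper contains a typographical error (the right-hand side should read $d_t(\widehat{\beta_0})+d_t(\widehat{\beta_1})$), which you have silently corrected.
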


Given a braid closure $\widehat{\beta}$, Plamenevskaya showed how to associate to $\widehat{\beta}$ a homology class $\psi(\widehat{\beta})$ in Khovanov homology which is well-defined up to sign~\cite{plamenevskaya_transverse_2006}. The homology class $\psi(\widehat{\beta})$ is invariant not only under braid conjugation but also under positive stabilization and destabilization and so it is an invariant of the transverse isotopy class of $\widehat{\beta}$. We recall the definition and a few properties of this invariant. The invariant is defined to be the homology class of the distinguished generator $v_-$ where all the circles of the oriented resolution are labeled with a $-$ sign in the Khovanov homology of $ \widehat{\beta}$.
\begin{prop}[Proposition 3.1 of~\cite{baldwin_categorified_2015}]\label{RV} If $\psi( \widehat{\beta}) \not = 0$ then $\beta$ is right-veering. \end{prop}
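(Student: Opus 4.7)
I would prove the contrapositive: assume $\beta \in B_n$ fails to be right-veering and construct an explicit chain $y$ in the Khovanov complex of $\widehat{\beta}$ with $\partial y = v_-$, so that $\psi(\widehat{\beta}) = [v_-] = 0$.

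The first ingredient is the Honda-Kazez-Matic characterization of non-right-veering elements of $\Mod(D_n)$: there is a properly embedded essential arc $\gamma$ from $\partial D_n$ to a puncture whose image $\beta(\gamma)$ lies strictly to the left of $\gamma$ rel endpoints. After conjugating $\beta$, I can arrange $\gamma = \gamma_i$ to be one of the standard radial arcs from $\partial D_n$ to the $i$-th puncture, and I can choose a diagrammatic representative of $\beta$ in which the leftward push of $\gamma_i$ is realized by a distinguished negative crossing $c$ (of the form $\sigma_{i-1}^{-1}$ or $\sigma_i^{-1}$) sitting at the top of the word.

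Next I examine the Khovanov cube of resolutions of $\widehat{\beta}$ locally at $c$. Because $c$ is negative, the oriented resolution assigns $c$ its $1$-smoothing, and $v_-$ lives at the oriented vertex with every circle labeled $x_-$. Let $v'$ be the adjacent vertex obtained by switching $c$ to its $0$-smoothing; topologically this is either a merge or a split of circles in the annular diagram. I would define a chain $y$ supported at $v'$ by labeling circles appropriately, placing one $x_+$ on the circle that is about to be merged in the merge case, or both $x_-$'s on the two circles about to be created in the split case, so that the saddle map $v' \to$ (oriented vertex) sends $y$ to $v_-$ via $m(x_+ \otimes x_-) = x_-$ or $\Delta(x_-) = x_- \otimes x_-$.

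The main obstacle is controlling the other components of $\partial y$ coming from the differentials associated to the remaining crossings of the diagram: these extra terms must cancel or themselves lie in $\im \partial$. This is where the global geometric meaning of non-right-veering, rather than just the existence of one negative crossing, becomes essential: the leftward displacement of $\gamma$ by $\beta$ supplies the bookkeeping needed to iterate the local construction above along the full sequence of crossings traversed by $\gamma$, so that the error terms telescope. I would execute this by induction on the geometric intersection number of $\beta(\gamma)$ with the standard collection of arcs, the base case being $\beta$ conjugate to a braid starting with $\sigma_i^{-1}$ and with $\gamma_i$ otherwise unobstructed, which reduces to a one-line Khovanov differential computation. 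The hardest part will be the sign bookkeeping in the inductive step and certifying that the predicted cancellations truly occur, since the annular setting forces one to track which circles are essential and which are trivial at each intermediate vertex.
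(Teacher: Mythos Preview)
The paper does not prove this proposition at all: it is quoted verbatim from Baldwin--Grigsby (Proposition~3.1 of~\cite{baldwin_categorified_2015}) and used as a black box in the background section. So there is no ``paper's own proof'' to compare against; your task was to reprove a cited external result.

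That said, your outline is in the right spirit of the Baldwin--Grigsby argument: one does prove the contrapositive, one does use the left-veered arc to locate a distinguished negative crossing, and one does exhibit $v_-$ as a boundary by looking at the adjacent vertex in the cube. Where your proposal is thin is exactly where you flag it: the ``error terms telescope'' step. In the actual argument one does not perform an open-ended induction on intersection number; instead one invokes a sharper algebraic characterization of non-right-veering braids (coming from the Dehornoy/HKM picture) that lets you conjugate $\beta$ into a form where a single negative crossing is genuinely isolated, so that the unwanted components of $\partial y$ vanish for grading reasons rather than by an iterated cancellation. Your plan as written would require you to control a growing collection of correction terms and their signs across many vertices of the cube simultaneously, and you have not said what invariant decreases in the induction or why the base case is reachable from an arbitrary non-right-veering $\beta$. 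That is the gap: the reduction to a clean local model is the substance of the proof, not a preliminary normalization, and your sketch treats it as the latter.
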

Note that there are examples of right-veering braids whose transverse invariants $\psi$ vanish~\cite{plamenevskaya_braid_2018}.
\begin{prop}[\cite{plamenevskaya_transverse_2006}]\label{slands} For any $n$-braid, if $s( \widehat{\beta}) - 1 = w( \widehat{\beta}) - n$ then $\psi( \widehat{\beta}) \not = 0$.\end{prop}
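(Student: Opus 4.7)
The plan is to prove the contrapositive: if $\psi(\widehat{\beta}) = 0$, then $s(\widehat{\beta}) - 1 > w(\widehat{\beta}) - n$. The key observation is that the class $\psi(\widehat{\beta}) = [v_-]$ lives in exactly the homological bigrading $(i, j) = (0, w - n)$, and the $v_-$ labeling of the oriented resolution also appears (with nonzero coefficient) as the unique lowest $j$-grading term in the Lee cycle $\mathfrak{s}_o$.

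First I would verify the grading computations. The oriented resolution of an $n$-braid closure consists of $n$ parallel circles around the braid axis, so labeling every circle by $v_-$ gives a generator in homological grading $0$ and quantum grading $q(v_-) = -n + n_+ - 2n_- + r = w(\widehat{\beta}) - n$, using $r = n_-$ for the oriented resolution. Next I would check that $v_-$ is a Khovanov cycle by observing that at each positive crossing of $\widehat{\beta}$ the two local strands lie in distinct circles of the oriented resolution, so changing that crossing from 0-resolution to 1-resolution is a merge, and the contribution is $m(v_- \otimes v_-) = 0$. Thus $\partial v_- = 0$ and $\psi(\widehat{\beta}) = [v_-] \in \Kh^{0, w - n}(\widehat{\beta})$ is well-defined. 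Finally, writing $\mathfrak{s}_o = \epsilon\, v_- + R$ for some sign $\epsilon = \pm 1$, where every term of $R$ has $j$-grading at least $w - n + 2$ (replacing any single $v_-$ by $v_+$ in the labeling raises $j$ by $2$), shows that the minimum $j$-grading achieved in $\mathfrak{s}_o$ is $w - n$.

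Now suppose $\psi(\widehat{\beta}) = 0$, so $v_- = \partial y$ for some Khovanov chain $y$ which, since $\partial$ preserves $j$, may be taken homogeneous with $j(y) = w - n$. In the Lee complex the differential is $\partial_{\Lee} = \partial + \Phi$, where $\Phi$ strictly raises the $j$-filtration (by $4$), so
\[
\mathfrak{s}_o - \epsilon\, \partial_{\Lee}(y) = \epsilon v_- + R - \epsilon v_- - \epsilon\, \Phi(y) = R - \epsilon\, \Phi(y),
\]
which represents the same Lee class as $\mathfrak{s}_o$ but has all terms in $j$-filtration $\geq w - n + 2$. By the characterization $s(\widehat{\beta}) - 1 = \max\{gr_j(x) \mid [x] = [\mathfrak{s}_o]\}$, this forces $s(\widehat{\beta}) - 1 \geq w(\widehat{\beta}) - n + 2$, which is the desired contrapositive.

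The whole argument is essentially bookkeeping once the bigrading of $v_-$ and the fact that $v_-$ appears in $\mathfrak{s}_o$ as its unique minimum-$j$ term are pinned down; there is no serious obstacle. The one point to be careful about is the interaction between the Khovanov differential $\partial$ (which preserves $j$) and the Lee perturbation $\Phi$ (which strictly raises $j$), since the proof crucially uses that cancelling $v_-$ via $\partial y$ cannot re-introduce anything at $j$-filtration $w - n$.
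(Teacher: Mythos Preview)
The paper does not give its own proof of this proposition; it is quoted from Plamenevskaya's paper and used as a black box. So there is no in-paper argument to compare against.

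That said, your argument is correct and is essentially the standard one. The ingredients you isolate --- that the oriented resolution of an $n$-braid closure consists of $n$ parallel circles so that $v_-$ sits in bigrading $(0,w-n)$, that $v_-$ is a Khovanov cycle because every outgoing edge of the oriented resolution (coming from a positive crossing) is a merge with $m(v_-\otimes v_-)=0$, and that $v_-$ is the unique minimum-$j$ summand of $\mathfrak{s}_o$ --- are exactly the ones used in Plamenevskaya's and Rasmussen's papers. The filtration trick at the end (subtract $\epsilon\,\partial_{\Lee}y$ to kill the $v_-$ term while only introducing terms of higher $j$-grading via $\Phi$) is also the standard maneuver. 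One small comment: you phrase the conclusion as $s-1\geq w-n+2$, which is stronger than the bare contrapositive $s-1\neq w-n$; this is fine, and in fact combined with the observation that $\mathfrak{s}_o$ itself has filtered degree $w-n$ (so $s-1\geq w-n$ always) it recovers the usual dichotomy $s-1\in\{w-n,\,w-n+2,\ldots\}$.
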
 This proposition includes all quasipositive braids as examples. More generally for any $n$-braid, if $d_t( \widehat{\beta})$ has slope $n$ at any point in the interval $[0,1)$ then $\psi( \widehat{\beta}) \not = 0$~\cite{grigsby_annular_2017}.

The transverse invariant $\psi$ is also functorial in the the following sense.
\begin{prop}[Theorem~4 of~\cite{plamenevskaya_transverse_2006}]\label{functorial} If $\beta'$ is a braid obtained from $\beta$ by adding a positive crossing then the crossing change induces a map $f: \Kh( \widehat{\beta}') \to \Kh( \widehat{\beta})$ and $f(\psi( \widehat{\beta}')) = \pm \psi( \widehat{\beta})$\end{prop}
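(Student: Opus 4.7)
The plan is to realize $f$ as the natural chain map coming from the cube of resolutions at the new positive crossing, and then to observe that $\psi(\widehat{\beta'})$ and $\psi(\widehat{\beta})$ both sit in the oriented resolution part and are matched up to sign by this chain map.

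First I would set up notation. Let $c$ denote the extra positive crossing in $\widehat{\beta'}$, so $\widehat{\beta'}_0$ (the $0$-resolution at $c$) is exactly $\widehat{\beta}$ as an oriented diagram, and $\widehat{\beta'}_1$ is the $1$-resolution. Because $c$ is positive, the grading convention for the Khovanov cube of resolutions places $\widehat{\beta'}_0$ in homological degree $0$ relative to $c$ and $\widehat{\beta'}_1$ in homological degree $+1$, with the appropriate quantum shift. Writing the Khovanov chain complex as the mapping cone
\[
CKh(\widehat{\beta'}) \;=\; \mathrm{Cone}\bigl(d_c : CKh(\widehat{\beta'}_0) \to CKh(\widehat{\beta'}_1)[1]\{1\}\bigr),
\]
there is a canonical short exact sequence of chain complexes
\[
0 \to CKh(\widehat{\beta'}_1)[1]\{1\} \to CKh(\widehat{\beta'}) \to CKh(\widehat{\beta'}_0) \to 0,
\]
where the right-hand map is the projection onto the $0$-resolution summand. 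Identifying $\widehat{\beta'}_0 = \widehat{\beta}$ and passing to homology (after absorbing the grading shift coming from the fact that $\widehat{\beta'}$ has one more positive crossing than $\widehat{\beta}$ in its oriented diagram), this projection descends to the desired map $f: \Kh(\widehat{\beta'}) \to \Kh(\widehat{\beta})$.

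Next I would identify $\psi(\widehat{\beta'})$ inside this decomposition. By definition $\psi(\widehat{\beta'})$ is the class of the distinguished generator $v_-$ living at the oriented resolution of $\widehat{\beta'}$. Since $c$ is a positive crossing, its oriented resolution is the $0$-resolution, so this cycle lies entirely in the summand $CKh(\widehat{\beta'}_0)$. On the other hand, the collection of circles obtained by oriented-resolving $\widehat{\beta'}$ is identical to the collection obtained by oriented-resolving $\widehat{\beta}$, and the $-$ labels are placed on exactly the same circles in both cases. Consequently the chain-level projection sends the $v_-$-generator of $\widehat{\beta'}$ to the $v_-$-generator of $\widehat{\beta}$, which represents $\psi(\widehat{\beta})$.

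The remaining work is bookkeeping: one must verify that the homological and quantum degree shifts absorbed into the definition of $f$ match $\psi$ to $\psi$ rather than to a nontrivial multiple, and that the sign ambiguity in both $\psi$ classes is exactly what absorbs the $\pm$ in the statement. This is the part I expect to be the main obstacle, since it requires carefully comparing the $(n_+, n_-)$ counts for $\widehat{\beta}$ versus $\widehat{\beta'}$ and tracking through the shift conventions for positive-crossing resolutions; the homological and quantum degrees of $v_-$ at the oriented resolution transform in exactly the way needed to compensate, so up to the unavoidable sign the induced map identifies the two transverse invariants, yielding $f(\psi(\widehat{\beta'})) = \pm\, \psi(\widehat{\beta})$.
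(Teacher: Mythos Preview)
The paper does not prove this statement; it is quoted as Theorem~4 of Plamenevskaya's paper~\cite{plamenevskaya_transverse_2006} and used as a black box. Your argument---realizing $f$ as the projection onto the $0$-resolution summand of the mapping cone at the new positive crossing, and observing that the oriented resolution of $\widehat{\beta'}$ sits in that summand so that $v_-$ is sent to $v_-$---is exactly the standard proof, and is essentially Plamenevskaya's original argument.
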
 In particular if $\psi( \widehat{\beta}) \not = 0$ then $\psi( \widehat{\beta}') \not = 0$; equivalently if $\psi( \widehat{\beta}') = 0$ then $\psi( \widehat{\beta}) = 0$. 
 
For 3-braids, it is known exactly when the invariant $\psi$ is non-zero. This result was well known to experts but the author is not aware of a complete proof in the literature. The case when $d>1$ is also treated in~\cite{hubbard_note_2018}.  

\begin{lem}\label{psi3}
A $3$-braid $\beta$ has $\psi( \widehat{\beta}) \not = 0$ if and only if $\beta$ is conjugate to one of the following braids. 
\begin{enumerate}
\item $h^d\sigma_1 \sigma_2^{-a_1} \cdots \sigma_1 \sigma_2^{-a_n}$ with $a_i \geq 0$ and some $a_i > 0$ and $d > 0$.
\item $h^d \sigma_2^m$ with $m \in \Z$ and either $d = 0$ $m \geq 0$, $d = 1$  $m \geq -4$, or $d > 1$.
\item $h^d \sigma_1^m \sigma_2^{-1}$ with $m \in \{ -1 ,-2, -3\}$ and $d > 0$.
\end{enumerate}
\end{lem}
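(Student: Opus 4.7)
The plan is a case analysis on Murasugi's three families, using two main tools: Proposition~\ref{slands} (if $s(\widehat{\beta}) - 1 = w(\widehat{\beta}) - n$ then $\psi(\widehat{\beta}) \neq 0$) for establishing non-vanishing in the listed cases, and Proposition~\ref{RV} ($\psi(\widehat{\beta}) \neq 0$ implies $\beta$ is right-veering) for establishing vanishing in the unlisted cases. For $d > 1$ the claim is addressed in~\cite{hubbard_note_2018}, so the essential work is at $d \in \{-1, 0, 1\}$.

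For the listed cases, the strategy is to exhibit each $\beta$, up to conjugation, as a quasipositive braid, so that $s(\widehat{\beta}) = w(\widehat{\beta}) - n + 1$ and Proposition~\ref{slands} applies. The central identity is
\[h\sigma_2^{-1} = (\sigma_1\sigma_2)^3 \sigma_2^{-1} = (\sigma_1\sigma_2)^2 \sigma_1,\]
which absorbs one $\sigma_2^{-1}$ into the full twist. Iterating this with the braid relation $\sigma_1\sigma_2\sigma_1 = \sigma_2\sigma_1\sigma_2$ and the centrality of $h$ yields positive or explicitly quasipositive representatives in each listed case: for example $h\sigma_2^{-2} = \sigma_1\sigma_2^2\sigma_1$ is positive, and after cyclic conjugation $h\sigma_2^{-4}$ is conjugate to $\sigma_2^{-2}\sigma_1\sigma_2^2\sigma_1$, a product of two positive conjugates of $\sigma_1$ and hence quasipositive. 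The same kind of manipulation handles families 1 and 3 at $d > 0$ by pushing the full twists past the negative $\sigma_2^{-a_i}$ factors and absorbing them into positive conjugates of $\sigma_1$ and $\sigma_2$. The case $d = 0$, $m \geq 0$ of family 2 is immediate since the braid is already positive.

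For the unlisted cases I would show $\beta$ is not right-veering by tracking the image of an essential arc in the punctured disk $D_3$ and conclude $\psi = 0$ via Proposition~\ref{RV}. This is immediate when $d \leq -1$ (the inverse full twists dominate the arc action) and when $d = 0$, $m < 0$ in family 2 (where $\sigma_2^m$ is a negative half-twist that directly sends the arc between punctures 2 and 3 to the left); the analogous arc analyses cover the unlisted cases of families 1 and 3 at $d \leq 0$ after using centrality of $h$ to reorganize the braid word. The main obstacle is the boundary at $m = -4$ in family 2 with $d = 1$, where one must establish both the quasipositivity of $h\sigma_2^{-4}$ (via the explicit rewriting above) and the failure of right-veering for $h\sigma_2^{-5}$; for the latter the single full twist provides just enough right-veering ``buffer'' to compensate for four copies of $\sigma_2^{-1}$ but not five, and I would exhibit an explicit arc whose image under $h\sigma_2^{-5}$ lies strictly to the left. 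Once this boundary is pinned down, the remaining cases in all three families follow from the templates above.
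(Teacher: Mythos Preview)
Your approach has two genuine gaps, both occurring precisely at the delicate boundary cases where the dichotomy ``quasipositive vs.\ not right-veering'' breaks down.

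First, for family~1 at $d=1$: these braids are \emph{not} in general quasipositive. A quasipositive braid has non-negative writhe (it is a product of conjugates of positive generators, each contributing $+1$), but $h\sigma_1\sigma_2^{-a_1}\cdots\sigma_1\sigma_2^{-a_n}$ has writhe $6+n-\sum a_i$, which is negative whenever $\sum a_i > n+6$. So the ``absorb the negative crossings into conjugates'' manipulation you sketch cannot succeed in general, and Proposition~\ref{slands} is not directly available. The paper instead invokes Baldwin's result that these braid closures are quasi-alternating, which forces $\psi\neq 0$ by a different mechanism; for $d>1$ one then adds positive crossings and uses the functoriality of $\psi$ (Proposition~\ref{functorial}), which your proposal does not mention.

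Second, and more seriously, the braid $h\sigma_2^{-5}$ \emph{is} right-veering. The paper notes this explicitly in Section~\ref{compare}: the braids $h\sigma_2^m$ with $m\leq -5$ are right-veering but have $\psi=0$, and indeed their Heegaard--Floer transverse invariant $\widehat{\Theta}$ is nonzero. So no arc argument can show $\psi(\widehat{h\sigma_2^{-5}})=0$ via Proposition~\ref{RV}; the paper establishes vanishing here by a direct computation in Khovanov homology (citing~\cite{baldwin_khovanov_2010}) and then propagates to $m<-5$ via functoriality. Your proposal's reliance on right-veering as the sole vanishing criterion therefore cannot close this case.
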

\begin{proof}

For the first family, when $d <1$ the braids are not right-veering and so their $\psi$ invariants vanish by Proposition~\ref{RV}. When $d = 1$ the argument that these braids are quasi-alternating gives you that the $\psi$ invariants are non-vanishing (see Remark 7.6 of~\cite{baldwin_khovanov_2010}). For $d>1$ these braids can be obtained from braids in this family with $d = 1$ by adding positive crossings and so their $\psi$ invariants are also non-vanishing by Proposition~\ref{functorial}. 

For the second family, when $d < 0 $ or $d = 0 $ and $m < 0$ the braids are all not right-veering so their $\psi$ invariants vanish by Proposition~\ref{RV}~\cite{plamenevskaya_braid_2018}. When $d = 1$ and $m = -5$ a calculation shows that the $\psi$ invariant vanishes and so it also vanishes when $m < -5$ by Proposition~\ref{functorial}~\cite{baldwin_khovanov_2010}. When $d = 1$ and $m = -4$, this braid is quasipositive so the $\psi$ invariant is non-zero by Proposition~\ref{slands} and then so are the $\psi$ invariants of all braids with $d = 1$ and $m > -4$ by Proposition~\ref{functorial}. Finally, if $d > 1$ then the braids can be obtained by adding positive crossings to braids in the first family with non-vanishing $\psi$ invariants and so the $\psi$ invariant is non-zero for these braids as well by Proposition~\ref{functorial}. 

For the third family, note that when $d \leq 0$ the braids are not right-veering and so their $\psi$ invariants vanish by Proposition~\ref{RV} and when $d>0$ the braids are quasipositive so their $\psi$ invariants are all non-zero by Proposition~\ref{slands}~\cite{plamenevskaya_braid_2018}.

\end{proof}

\section{Additional properties of the annular Rasmussen invariant}\label{additionalproperties}

In this section we aim to place constraints on the $d_t$ invariant of a braid. These constraints will limit the possible shapes of $d_t$. For a 3-braid, these constraints will imply that $d_t$ is determined entirely by the $s$ invariant and the writhe of the braid. 

\begin{prop}
If $\beta$ is a negative $n$ braid, then the slope $m_t( \widehat{\beta})$ is in the set $\{n , n -2 , \ldots , -n + 2 \}$ for $t \in (0,1)$.
\end{prop}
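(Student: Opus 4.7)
The plan is to reduce the proposition to showing that $-n$ never occurs as a slope, i.e., that the leading ($\min j_t$) component of any optimal cycle has annular grading strictly less than $n$. The key structural setup, which I would establish first, is that for a negative $n$-braid every crossing is negative, so the oriented resolution of $\widehat{\beta}$ is the ``all-1'' vertex of the cube of resolutions; since the homological grading shift is exactly $-n_-$, this vertex is the \emph{unique} vertex in homological grading zero, and therefore every cycle representing $[\mathfrak{s}_o]$ is supported entirely on the $n$ concentric essential circles of this resolution.

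At the all-1 vertex, each basis labeling $v_{\vec{\epsilon}}$ has quantum grading $j = \sum_i \epsilon_i + w(\widehat{\beta}) = k + w(\widehat{\beta})$, so $j_t = (1-t)k + w(\widehat{\beta})$ is strictly increasing in $k$ whenever $t \in (0,1)$. In particular, the unique basis element with $k = n$ is $v_+^{\otimes n}$, and by this monotonicity it attains the \emph{maximum} value of $j_t$ among all basis labelings at the oriented resolution. If the slope $m_t(\widehat{\beta})$ equaled $-n$ at some $t \in (0,1)$, the optimal cycle $x$ would need to have $v_+^{\otimes n}$ as its $\min j_t$ component; but because $v_+^{\otimes n}$ is actually the $\max j_t$ element at the all-1 vertex, this forces $x$ to have no other support, so $x = c\, v_+^{\otimes n}$ for some nonzero scalar $c$.

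The main step is to rule out this configuration via the subcomplex decomposition of $\Lee(\widehat{\beta})$ into $L_1 \oplus L_2$ by quantum grading modulo $4$. Since $v_+^{\otimes n}$ is quantum-homogeneous, the class $[c\, v_+^{\otimes n}]$ lies entirely in a single one of these subcomplexes, say $L_1$. On the other hand, Proposition~\ref{subcomplex} guarantees that $[\mathfrak{s}_o]$ is non-zero in \emph{both} $L_1$ and $L_2$. Projecting onto $L_2$ then yields $[c\, v_+^{\otimes n}]|_{L_2} = 0 \neq [\mathfrak{s}_o]|_{L_2}$, so $[c\, v_+^{\otimes n}] \neq [\mathfrak{s}_o]$ for any $c$, giving the required contradiction and establishing that $m_t(\widehat{\beta}) \in \{n, n-2, \ldots, -n+2\}$. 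The step I expect to require the most care is verifying the first reduction—that every cycle representing $[\mathfrak{s}_o]$ is really confined to the all-1 vertex—since this is precisely where the hypothesis that $\beta$ is negative enters; once this is in place, the monotonicity of $j_t$ in $k$ together with the obstruction from Proposition~\ref{subcomplex} combine immediately to eliminate slope $-n$.
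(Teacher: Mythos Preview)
Your proposal is correct and follows essentially the same approach as the paper: both arguments use that for a negative braid the oriented resolution is the extremal vertex (so representatives of $[\mathfrak{s}_o]$ live entirely there), that $v_+^{\otimes n}$ is the unique generator with $k=n$ and has maximal $j_t$ at that vertex, and then invoke the $L_1\oplus L_2$ decomposition via Proposition~\ref{subcomplex} to rule it out. The only cosmetic difference is that you push the contradiction to ``the optimal cycle would have to equal $c\,v_+^{\otimes n}$, which lies in a single subcomplex,'' whereas the paper phrases it as ``any representative must contain a generator in $L_2$, which has strictly smaller $j_t$ than $v_+^{\otimes n}$''; these are the same observation.
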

In~\cite{grigsby_annular_2017} it was proved that $m_t( \widehat{\beta})$ is in the set $\{n , n -2 , \ldots , -n + 2 , -n \}$ so we rule out the possibility of $m_t( \widehat{\beta}) = -n$ for $t \in (0,1)$.
\begin{proof}
For a negative braid closure $ \widehat{\beta}$, the oriented resolution is at the far right of the cube of resolutions, so the distinguished generators from the oriented resolution can only be homologous to linear combinations of other generators from the oriented resolution. The only generator $x$ with $k$-grading $n$ is the labeling of every circle with a $+$. The generator $x$ is contained in exactly one of the subcomplexes $L_1, L_2$ of the Lee chain complex, assume that it is in $L_1$. The homology class $[\fraks_0]$ is non-zero in both $L_1$ and $L_2$ by Proposition~\ref{subcomplex}, and so any representative of the homology class $[\fraks_0]$ has elements in both $L_1$ and $L_2$. Every distinguished generator $y$ in $L_2$ has $j_t$ grading less than $x$ for any $t \in [0,1]$ which means that $x$ is never the generator that determines $d_t( \widehat{\beta})$ for $t \in [0,1)$. The slope at $t$ is the negative $k$ grading of the generator determining $d_t( \widehat{\beta})$ so the slope $m_t( \widehat{\beta})$ is never $ -n$ for $t \in [0,1]$.

\end{proof}

The condition that negative $n$ braids don't have slope $-n$ is enough to show that all $n$ braids can't have slope $n$.

\begin{prop}
If $\beta$ is an $n$-braid, then the slope $m_t( \widehat{\beta})$ is in the set $\{n , n -2 , \ldots , -n + 2 \}$ for $ t \in (0,1)$.
\end{prop}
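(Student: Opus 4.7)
The plan is to argue by contradiction, adapting the argument of the previous proposition and then reducing to it. The key structural observation — that the only generator with $k$-grading $n$ is $x_+$, the all-$+$ labeling of the braid-like (oriented) resolution — carries over unchanged to an arbitrary $n$-braid: in any annular resolution of an $n$-braid the total winding of annular components equals $n$ and each annular circle has winding at least $1$, so there are at most $n$ annular circles with equality only for the braid-like smoothing at every crossing; then achieving $k = n$ forces every such circle to be labeled $+$. This input does not depend on the signs of the crossings of $\beta$.

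Now suppose for contradiction $\beta$ is an $n$-braid with $m_{t_0}(\widehat{\beta}) = -n$ for some $t_0 \in (0,1)$. The slope-$-n$ hypothesis forces the cycle $c$ representing $[\mathfrak{s}_o]$ that realizes $d_{t_0}(\widehat{\beta})$ to have its max-$j_{t_0}$ generator at $k$-grading $n$, hence equal to $x_+$, so $d_{t_0}(\widehat{\beta}) = j_{t_0}(x_+) = (n + w(\widehat{\beta})) - t_0 n$. Placing $x_+ \in L_1$, Proposition~\ref{subcomplex} then forces every such representative to have a nontrivial $L_2$-piece of $k$-grading at most $n-2$.

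To derive the contradiction I would reduce to the previous proposition via Proposition~\ref{cobordism}. Let $\beta_-$ be the negative $n$-braid obtained from $\beta$ by changing each of its $p$ positive crossings to a negative crossing; each crossing change is realized by a genus-$0$ cobordism with two odd-index critical points, giving a cobordism from $\widehat{\beta}$ to $\widehat{\beta_-}$ with at most $2p$ odd-index critical points, so $|d_{t_0}(\widehat{\beta}) - d_{t_0}(\widehat{\beta_-})| \leq 2p$. Since $w(\widehat{\beta_-}) = w(\widehat{\beta}) - 2p$, one has $j_{t_0}(x_+^{(\beta_-)}) = d_{t_0}(\widehat{\beta}) - 2p$; combining this with the cobordism bound and the always-valid lower bound $d_{t_0}(\widehat{\beta_-}) \geq j_{t_0}(x_+^{(\beta_-)})$ (realized by $\mathfrak{s}_o$ for $\beta_-$) pins down $d_{t_0}(\widehat{\beta_-})$ to $j_{t_0}(x_+^{(\beta_-)})$, which forces slope $-n$ for $\widehat{\beta_-}$ at $t_0$ and contradicts the previous proposition.

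The main obstacle I anticipate is the last step: Proposition~\ref{cobordism} controls pointwise values of $d_t$ rather than slopes, so one must upgrade a pointwise equality at $t_0$ to a slope condition for $\widehat{\beta_-}$. I expect this is handled by using piecewise linearity of $d_t(\widehat{\beta})$ — the slope-$-n$ hypothesis holds on an open subinterval of $(0,1)$ containing $t_0$, and applying the cobordism bound at each point of that interval transfers the equality $d_t(\widehat{\beta_-}) = j_t(x_+^{(\beta_-)})$ to the full subinterval, producing an open interval on which $\widehat{\beta_-}$ exhibits slope $-n$. If this cobordism reduction proves stubborn, an alternative path is to extend directly the $j_t$-grading inequality $j_{t_0}(y) < j_{t_0}(x_+)$ from the oriented resolution (which the negative case used) to all distinguished generators $y \in L_2$ of the full Lee chain complex of $\beta$, treating non-oriented resolutions whose labelings could in principle carry high $j$-grading once positive crossings of $\beta$ are present.
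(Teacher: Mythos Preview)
Your overall approach---reduce to the negative-braid case via a cobordism and invoke the previous proposition---is exactly the paper's approach. The paper deletes each positive crossing (one saddle apiece) rather than changing it to a negative one (two saddles apiece), but either choice works since the writhe drop matches the number of odd-index critical points.

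There is, however, a genuine error in your third paragraph. The inequality $d_{t_0}(\widehat{\beta_-}) \geq j_{t_0}(x_+^{(\beta_-)})$ is \emph{not} ``realized by $\mathfrak{s}_o$'': for any chain $c$ the filtration level $j_t(c)$ is the \emph{minimum} of $j_t$ over its homogeneous summands, so taking $c=\mathfrak{s}_o$ yields only $d_t \geq j_t(x_-)$, not $j_t(x_+)$. The inequality you want does hold, but it comes from the cobordism bound itself together with your contradiction hypothesis $d_{t_0}(\widehat\beta)=j_{t_0}(x_+^{(\beta)})$. To pin the value down you need the \emph{upper} bound $d_{t_0}(\widehat{\beta_-}) \le j_{t_0}(x_+^{(\beta_-)})$, and that is precisely what the previous proposition supplies---indeed as a strict inequality for every $t\in[0,1)$, since for a negative braid every representative of $[\mathfrak{s}_o]$ lives in the oriented resolution and must contain an $L_2$-summand with $j_t$ strictly below $j_t(x_+)$.

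Once you see this, the contradiction-at-a-point framework and the pointwise-to-slope ``obstacle'' you flag both disappear. The paper argues directly: the previous proposition gives $d_t(\widehat{\beta'}) < w(\widehat{\beta'}) - n(t-1)$ for every $t\in[0,1)$, and the cobordism bound then transports this to $d_t(\widehat{\beta}) < w(\widehat{\beta}) - n(t-1)$ for every $t\in[0,1)$. Since $j_t(x_+)$ is exactly that line, $x_+$ can never be the determining generator, and the slope is never $-n$.
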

In~\cite{grigsby_annular_2017} it was proved that $m_t( \widehat{\beta})$ is in the set $\{n , n -2 , \ldots , -n + 2 , -n \}$ so we rule out the possibility of $m_t( \widehat{\beta}) = -n$ for $t \in (0,1)$.

\begin{proof}
The only generator with $k$ grading $n$ is the labeling of all circles in the oriented resolution with a $+$ and this generator lies on the line $w( \widehat{\beta}) - n(t-1)$ so it is enough to show that $d_t( \widehat{\beta})$ does not lie on this line for $t \in [0,1)$.

Let $\beta$ have $n_+$ positive crossings and $n_-$ negative crossings and $\beta'$ be the negative $n$ braid that is the result of removing all positive crossings from $\beta$. There is a cobordism from $\beta$ to $\beta'$ with $n_+$ odd-index annular critical points, one for each positive crossing removed from $\beta$. By Proposition~\ref{cobordism}, the cobordism gives the bound  $\vert d_t( \widehat{\beta}) - d_t( \widehat{\beta}') \vert \leq n_+$. For any $t \in [0,1)$ the previous proposition implies that $d_t(\beta') < -n_-  -n(t-1)$ so then $d_t( \widehat{\beta}) < n_+ -n_-  - n(t-1) = w - n(t-1)$ and $d_t( \widehat{\beta})$ doesn't lie on this line for $t \in [0,1)$.

\end{proof}

The following technical proposition will be used to provide strong restrictions on the shape of the $d_t$ invariant.

\begin{prop}\label{techprop}
Let $x,y$ are homogenous elements in different subcomplexes $L_1$ and $L_2$. Suppose in some neighborhood $(t_0, t_2)$ that $j_t(y) < j_t(x)$ on $(t_0 , t_1)$ and $j_t(x) < j_t(y)$ on $(t_1 , t_2)$. Additionally suppose that if $a \in L_1$ with $x$ and $j_t(a) > j_t(x)$ on $(t_1, t_2)$ then $j_t(a) > j_t(x)$ on $(t_0 , t_2)$ and similarly if $b \in L_2$ with $y$ and $j_t(b) > j_t(y)$ on $(t_0 , t_1)$ then $j_t(b) > j_t(y)$ on $(t_1, t_2)$. Then it is not possible to have $x$ determine $d_t$ on $(t_0 , t_1)$ and $y$ determine $d_t$ on $(t_1, t_2)$.
\end{prop}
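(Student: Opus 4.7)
The plan is to argue by contradiction, exploiting the decomposition $\Lee(\widehat{\beta}) = L_1 \oplus L_2$ together with Proposition~\ref{subcomplex}. The key observation is that $d_t$ can be optimized independently in each subcomplex and then combined by a minimum: if $d_t^{(i)}$ denotes the maximum of $j_t(z_i)$ over all $z_i \in L_i$ representing the $L_i$-component of $[\mathfrak{s}_o]$, then because the $j_t$-filtration on a direct sum is given by the minimum of the component filtrations, and because the two components can be optimized independently, one gets $d_t(\widehat{\beta}) = \min\{d_t^{(1)}, d_t^{(2)}\}$. Proposition~\ref{subcomplex} ensures that both of these maxima are actually defined, since neither $L_i$-component of $[\mathfrak{s}_o]$ is zero.

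With this reformulation in hand, assume for contradiction that $x \in L_1$ determines $d_t$ on $(t_0, t_1)$ and $y \in L_2$ determines $d_t$ on $(t_1, t_2)$ (the opposite assignment is symmetric). ``$x$ determines $d_t$'' forces $d_t^{(1)} = j_t(x)$ on $(t_0, t_1)$, so in particular no element of $L_1$ homologous to $x$ has strictly larger $j_t$-grading than $x$ on that interval. On the other side, $d_t = j_t(y)$ on $(t_1, t_2)$, and combining with the inequality $d_t \leq d_t^{(1)}$ yields $d_t^{(1)} \geq j_t(y) > j_t(x)$ there. Consequently, on $(t_1, t_2)$ there must exist some $a \in L_1$ in the same homology class as $x$ with $j_t(a) > j_t(x)$.

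This is precisely the situation to which the proposition's hypothesis applies: such an $a$ must satisfy $j_t(a) > j_t(x)$ on all of $(t_0, t_2)$, and in particular on $(t_0, t_1)$. But this contradicts the fact that $x$ is optimal in its $L_1$-class on $(t_0, t_1)$, completing the argument. The only real obstacle here is conceptual rather than computational: one has to recognize that the seemingly opaque convexity-type hypothesis on $a$ (resp.\ $b$) is exactly tailored to prevent the ``swap'' between $x$ and $y$ at $t_1$. The two halves of the hypothesis are symmetric in $x$ and $y$, and while only the $a$-direction is needed to derive the contradiction above, including both sides is natural as the roles of $(x,L_1)$ and $(y,L_2)$ are interchangeable.
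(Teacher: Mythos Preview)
Your proof is correct and is precisely the ``partial $d_t$'' perspective that the paper mentions in the remark immediately following its own proof of this proposition (attributed there to Wehrli). The paper's argument is more hands-on: it takes an optimal cycle $c_1 = x + \sum a_i + \sum b_j$ on $(t_0,t_1)$ and an optimal cycle $c_2 = y + \sum a'_i + \sum b'_j$ on $(t_1,t_2)$, then swaps the $L_1$- and $L_2$-parts to form $c_3 = \sum a'_i + \sum b_j$, and checks via the hypotheses that $j_t(c_3) > d_t$ on the whole interval. Your formulation $d_t = \min\{d_t^{(1)}, d_t^{(2)}\}$ packages this swapping step once and for all; after that, the contradiction reduces to showing $d_t^{(1)} > j_t(x)$ on $(t_0,t_1)$, which is exactly the paper's computation restricted to the $L_1$-summand. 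Both arguments in fact only need one of the two symmetric hypotheses, as you note.

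One small point worth tightening: the hypothesis of the proposition is phrased for \emph{homogeneous} elements $a \in L_1$, whereas the $a$ you produce is a representative of the $L_1$-component of $[\mathfrak{s}_o]$, hence a sum of homogeneous pieces. To apply the hypothesis cleanly, decompose $a = \sum a'_i$ into homogeneous summands; since $j_t(a) = \min_i j_t(a'_i) > j_t(x)$ on $(t_1,t_2)$, each $a'_i$ individually satisfies $j_t(a'_i) > j_t(x)$ there, the hypothesis extends this to $(t_0,t_2)$ for each $a'_i$, and taking the minimum gives $j_t(a) > j_t(x)$ on $(t_0,t_1)$ as needed. Also, to obtain a single such $a$ working on all of $(t_1,t_2)$ (rather than a $t$-dependent one), the cleanest route is to take the $L_1$-part of the optimal representative realizing $d_t = j_t(y)$ on that interval---which is exactly the $\sum a'_i$ appearing in the paper's $c_2$.
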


Note that a sufficient condition on when the assumptions of the theorem are met is when no other generator's line passes through the point of intersection of the lines of $x$ and $y$.

\begin{proof}
Suppose that $x$ determines $d_t$ on $(t_0 , t_1)$ and $y$ determines $d_t$ on $(t_1, t_2)$. Then there is a cycle $c_1$ representing $[\fraks_0]$ with $c_1 = x + \sum a_i + \sum b_j$ with $a_i \in L_1$ and $b_j \in L_2$ and $j_t(x) < j_t(b_j) $ for $t \in (t_0 , t_1)$. Similarly there is a cycle $c_2$ representing $[\fraks_0]$ with $c_2 = y + \sum a'_i + \sum b'_j$ with $a'_i \in L_1$ and $b'_j \in L_2$ and $j_t(y) < j_t(a'_i) $ for $t \in (t_1 , t_2)$. Then $j_t(x)<j_t(a'_i)$ for $t \in (t_0 , t_2)$ and $j_t(y) < j_t(b_j)$ for $t \in (t_0 , t_2)$. Then $c_3 = \sum a'_i + \sum b_j$ is a representative of $[\fraks_0]$ with $j_t(x) < j_t(c_3) $ for $t \in (t_0 , t_1)$ and $j_t(y) < j_t (c_3)$ for $t \in (t_1 , t_2)$. Constructing this representative of $[\fraks_0]$ with higher $j_t$ grading contradicts that $x$ and $y$ determined $d_t$ on these intervals.
\end{proof}

Informally, Proposition~\ref{techprop} means that you can't see a change from being determined by an element in one subcomplex to being determined by an element in the other subcomplex with lower $k$ grading/higher $j_t$ slope under certain additional assumptions.

\begin{rem}
It was pointed out to the author by Stephan Wehrli that it should also be possible to define two ``partial" $d_t$ invariants by using the portions of $[\mathfrak{s}_o]$ in each of the subcomplexes $L_1$ and $L_2$. In this case the $d_t$ invariant would be the minimum of the two ``partial" invariants for each $t$. This definition would provide a slightly different perspective to view the restrictions on the $d_t$ invariant provided by Proposition~\ref{techprop}.
\end{rem}

The $d_t$ invariant of a braid is determined by the $j_t$ grading of some generator of the braid's Lee chain complex. Noticing this and a few properties of the $d_t$ invariant, it is possible to restrict the shape of the $d_t$ invariant of a braid.

\begin{thm}\label{dtn}
For an $n$-braid $\beta$, there are only finitely many possible shapes that $d_t( \widehat{\beta})$ can take and there is a method for enumerating all the possibilities.
\end{thm}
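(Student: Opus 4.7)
The plan is to reduce the problem to enumerating piecewise linear functions on $[0,1]$ drawn from a finite pool of admissible lines. Set $f(t) := d_t(\widehat{\beta}) - w(\widehat{\beta})$. Since $d_1(\widehat{\beta}) = w(\widehat{\beta})$, we have $f(1) = 0$, and by the two preceding propositions the right-hand slope of $f$ lies in the finite set $\{n, n-2, \ldots, -n+2\}$. It follows that $|f(t)| \leq n(1-t) \leq n$ for all $t \in [0,1]$, so the normalized shape is uniformly bounded in terms of $n$ alone.

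The key observation is that every homogeneous Khovanov--Lee generator $x$ which can determine the value of $d_t(\widehat{\beta})$ at some $t \in [0,1]$ satisfies both $|k(x)| \leq n$, because any complete resolution of an $n$-braid closure has at most $n$ annular circles, and $|j(x) - w(\widehat{\beta})| \leq n$: indeed if $j(x) - t\,k(x) = d_t(\widehat{\beta})$ then
\[
|j(x) - w(\widehat{\beta})| \leq |d_t(\widehat{\beta}) - w(\widehat{\beta})| + t\,|k(x)| \leq n(1-t) + t\,n = n.
\]
Hence in normalized coordinates every line that can appear as a piece of the graph of $f$ is of the form $y = j^* - t\,k^*$ with $j^* = j(x) - w(\widehat{\beta})$ and $k^* = k(x)$ both integers in $[-n,n]$, so the pool of candidate lines is finite. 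Their pairwise intersections occur at $t$-coordinates that are rationals of denominator at most $2n$, and at values whose absolute value is bounded. Consequently any candidate shape for $f$ is a piecewise linear graph drawn from this finite pool subject to $f(1) = 0$ and the slope restrictions, and only finitely many such graphs exist.

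To enumerate the genuine shapes, invoke Proposition~\ref{subcomplex} to write $[\mathfrak{s}_o] = [\mathfrak{s}_o^1] + [\mathfrak{s}_o^2]$ with both summands non-zero, and observe that $d_t(\widehat{\beta}) = \min\bigl(d_t^1(\widehat{\beta}),\, d_t^2(\widehat{\beta})\bigr)$ where $d_t^i$ is the analogous invariant computed inside the subcomplex $L_i$. A candidate shape from the previous paragraph is realizable only if it arises as the minimum of two such partial invariants whose transitions obey the restrictions of Proposition~\ref{techprop}. Running through the finite candidate list and retaining those which pass this admissibility test yields the enumeration. The main obstacle is establishing the bound $|j(x) - w(\widehat{\beta})| \leq n$ on determining generators: this is what cuts down an a priori unbounded search over all generators of the Khovanov--Lee complex to a bounded search, and it relies on the slope bound together with $d_1 = w$. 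Once this bound is in hand, the finiteness and enumeration are essentially combinatorial bookkeeping.
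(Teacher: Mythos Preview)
Your proof is correct and follows essentially the same approach as the paper's: confine $d_t$ to the triangle with apex at $(1,w(\widehat{\beta}))$ via the slope bounds, observe that only finitely many generator lines can meet this region (your bound $|j(x)-w|\le n$ is exactly the paper's observation that the relevant $j_0$ values lie in $\{w-n,\ldots,w+n-2\}$), and then prune candidate paths using Proposition~\ref{techprop}. Your explicit use of the decomposition $d_t=\min(d_t^1,d_t^2)$ appears in the paper only as a remark (attributed to Wehrli) rather than in the proof proper, but it encodes the same restriction that Proposition~\ref{techprop} provides.
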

\begin{proof}
 
At $t=1$ the value of $d_t( \widehat{\beta})$ is known, $d_1( \widehat{\beta}) = w( \widehat{\beta})$ and in the interval $[0,1)$ the slope $m_t( \widehat{\beta})$ is bounded between $n$ and $-n + 2$. So in this interval, $d_t( \widehat{\beta})$ lies in the triangular region bounded by the lines passing through $(1, w( \widehat{\beta}))$ with slopes $n$ and $-n + 2$. A start to understanding the possible shapes of $d_t$ is listing all possible $j_t$ gradings of generators in this region for some $t \in (0,1)$.

There are generators with $j_t$ gradings on the line of slope $-n +2$ from $(0, w( \widehat{\beta}) + n -2)$ to $(1, w( \widehat{\beta}))$ and a single generator with $j_t$ grading on the line from $(0, w( \widehat{\beta}) - n )$ to $(1, w(\beta))$. Additionally there could be generators with $j_0$ grading from the set $\{w(\beta) - n + 2 , w(\beta) - n + 4, \ldots , w( \widehat{\beta}) + n - 4\}$ with any slope from the set $\{ n - 2, n - 4, \ldots , -n +2\}$. Now every possible shape of $d_t(\beta)$ must be some path along the lines coming from these generators which stays in the triangular region, never has $t$ decrease, and ends at the point $(1,w( \widehat{\beta}))$.

Using the restriction from Proposition~\ref{techprop}, some of these paths can be ruled out as possibilities for the shape of $d_t( \widehat{\beta})$.

\end{proof}

\begin{example}
\begin{figure}[h!]

  \centering
    \includegraphics[width=0.4\textwidth]{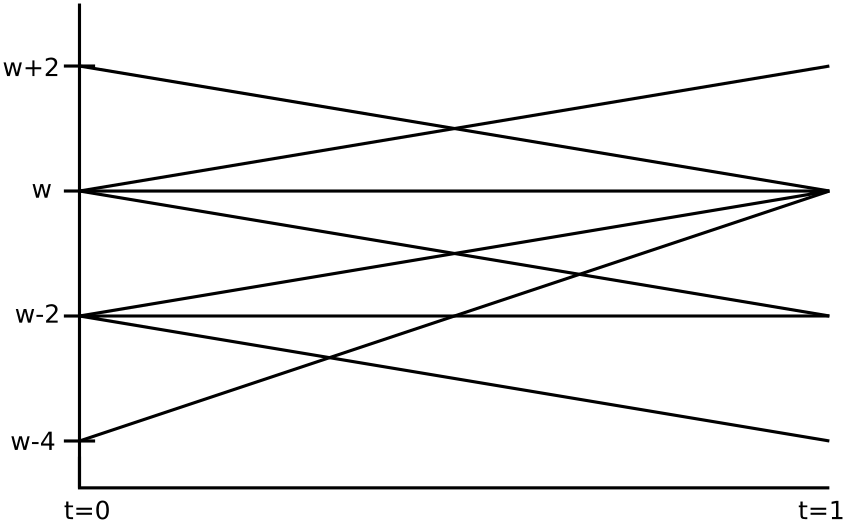}
      \caption{Possible generators for a 4-braid}\label{4braidexa}
\end{figure}

As an example of Theorem~\ref{dtn}, we can enumerate all possible shapes of $d_t( \widehat{\beta})$ when $\beta$ is a 4-braid. For a 4-braid, the shape of $d_t$ is restricted to paths to $(1,w( \widehat{\beta}))$ in Figure~\ref{4braidexa}. Examining the figure, there are ten possible paths; the six with non-constant slope are shown in Table~\ref{fig:4braids}. Some of these paths can not be the shape of $d_t$ however because of the restrictions on $d_t$ from Proposition~\ref{techprop}. Specifically the first and second paths in the top row and the second path in the bottom row can not be the shape of $d_t$ because of Proposition~\ref{techprop}.

\begin{table}
  \centering
  \begin{tabular}{ccc}
 \includegraphics[width=0.3\textwidth]{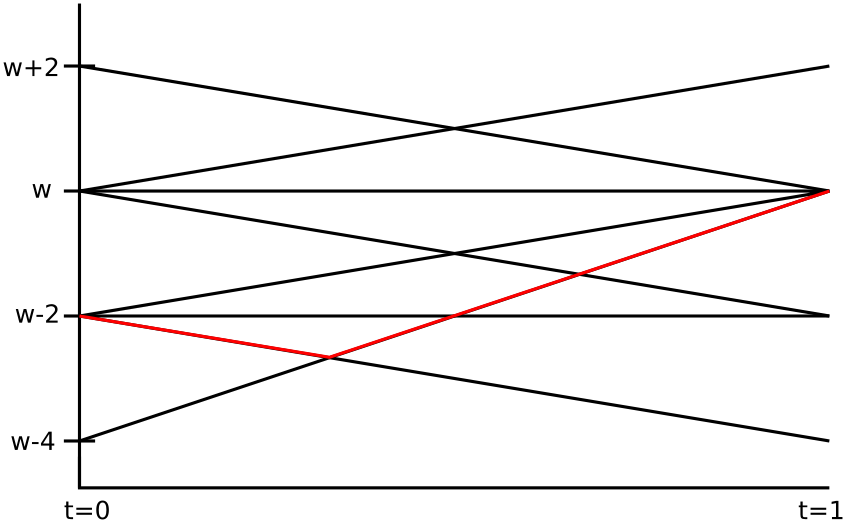}&\includegraphics[width=0.3\textwidth]{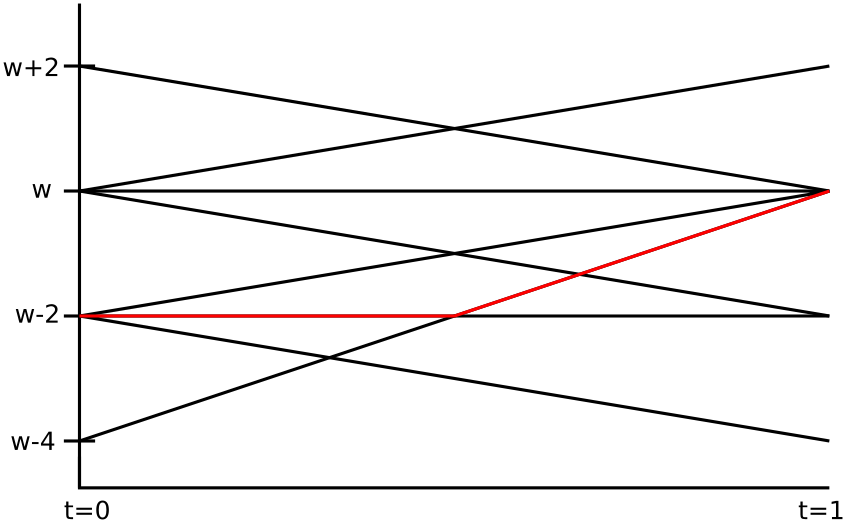}&   \includegraphics[width=0.3\textwidth]{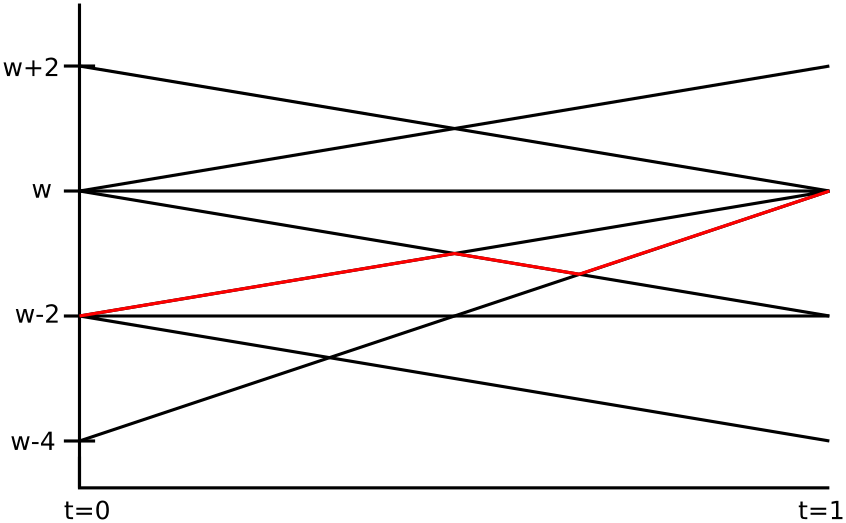}\\
 \\
 \includegraphics[width=0.3\textwidth]{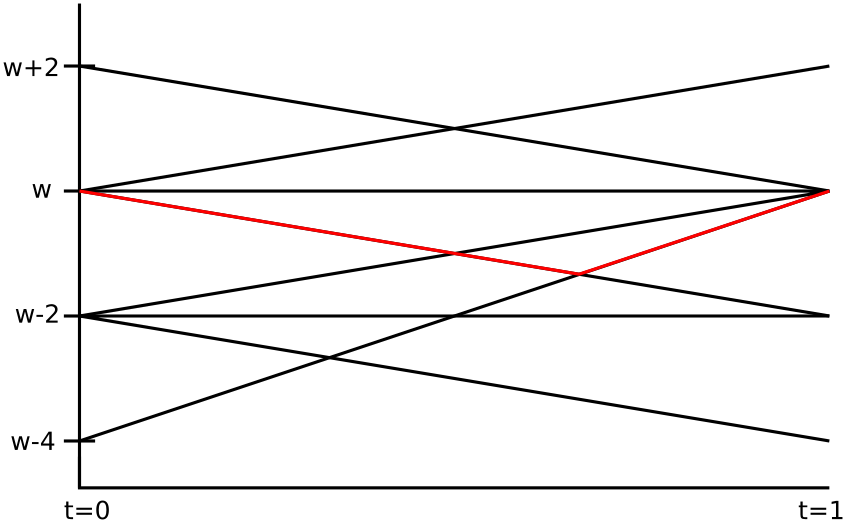}&\includegraphics[width=0.3\textwidth]{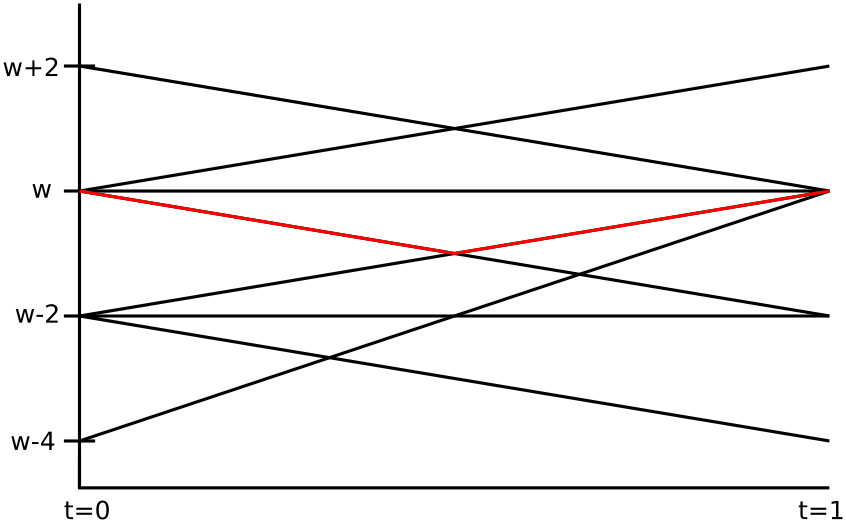}&   \includegraphics[width=0.3\textwidth]{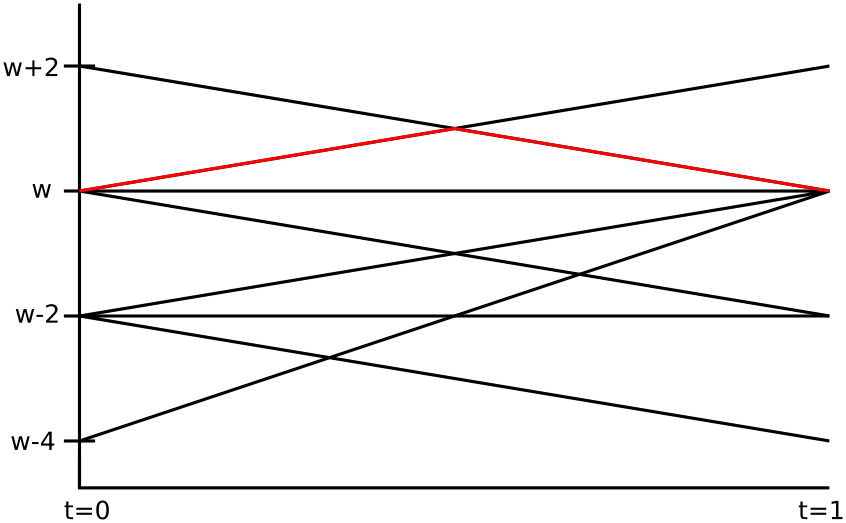}\\

   \end{tabular}
 \caption{The six possible shapes of $d_t$ of a 4-braid with non-constant slope}\label{fig:4braids}
\end{table}

\end{example}

Replicating the same process for 3-braids provides even stronger restrictions on the shape of the $d_t$ invariant.

\begin{thm}\label{dt3}
When $\beta$ is a 3-braid, $d_t( \widehat{\beta})$ is entirely determined by if $s( \widehat{\beta})= w(\widehat{\beta})-2$, $s( \widehat{\beta})= w(\widehat{\beta})$, or $s( \widehat{\beta})= w(\widehat{\beta})+2$.
\end{thm}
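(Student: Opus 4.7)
The plan is to specialize the enumeration procedure of Theorem~\ref{dtn} to $n = 3$ and then apply Proposition~\ref{techprop} to eliminate every multi-piece candidate for the shape of $d_t(\widehat{\beta})$.

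Write $w = w(\widehat{\beta})$. By Theorem~\ref{dtn} specialized to $n = 3$, the graph of $d_t(\widehat{\beta})$ is piecewise linear on $[0, 1]$ with slopes in $\{3, 1, -1\}$, passes through $(1, w)$, and traces a path along the lines determined by candidate generators in the triangular region. The unique generator with $k$-grading $-3$ contributes the line $A : y = w - 3 + 3t$, while the remaining generators whose lines enter the region have $j_0 \in \{w-1, w+1\}$ with slope $\pm 1$, giving
\begin{align*}
B &: y = w - 1 + t, & C &: y = w - 1 - t, & D &: y = w + 1 - t.
\end{align*}
Of these four lines, only $A$, $B$, and $D$ pass through $(1, w)$.

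Next I would compute all pairwise intersections having $t \in (0, 1)$. A direct check shows that every pair other than $\{A, C\}$ meets only at $t = 0$, at $t = 1$, or not at all; the pair $\{A, C\}$ crosses at $(1/2,\, w - 3/2)$. Hence any genuine multi-piece shape for $d_t(\widehat{\beta})$ must involve a transition at $t = 1/2$ between $C$ and $A$.

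The key step is to rule out this transition via Proposition~\ref{techprop}. The two subcomplexes of the Lee chain complex are indexed by quantum grading mod $4$: the generators on $A$ and $D$ satisfy $j \equiv w + 1 \pmod{4}$, while those on $B$ and $C$ satisfy $j \equiv w - 1 \pmod{4}$, so $A$ and $C$ lie in different subcomplexes. At $(1/2,\, w - 3/2)$ neither $B$ nor $D$ passes through, since both are strictly larger at $t = 1/2$; therefore the sufficient condition noted immediately after Proposition~\ref{techprop} is satisfied, and the switch from $C$ to $A$ is forbidden. Consequently $d_t(\widehat{\beta})$ is affine on all of $[0, 1]$, and the endpoint condition $d_1 = w$ forces it to equal $A$, $B$, or $D$, corresponding respectively to $s(\widehat{\beta}) = w - 2$, $w$, or $w + 2$. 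The main obstacle is checking the auxiliary hypothesis of Proposition~\ref{techprop}, which reduces to the line orderings $D > B > A$ and $B > C$ on $(0, 1)$; these follow at once from the explicit formulas above.
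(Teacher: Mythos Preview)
Your proof is correct and follows essentially the same approach as the paper: specialize the enumeration of Theorem~\ref{dtn} to $n=3$, observe that the only non-affine candidate path is the one switching from the slope~$-1$ line through $(0,w-1)$ to the slope~$3$ line at $t=1/2$, and eliminate it via Proposition~\ref{techprop} using that the two relevant generators lie in different mod-$4$ subcomplexes. Your version is more explicit than the paper's (naming the lines $A,B,C,D$, computing the intersection, and checking the parity of the $j$-gradings), but the logical structure is identical.
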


\begin{proof}
\begin{figure}

  \centering
    \includegraphics[width=0.4\textwidth]{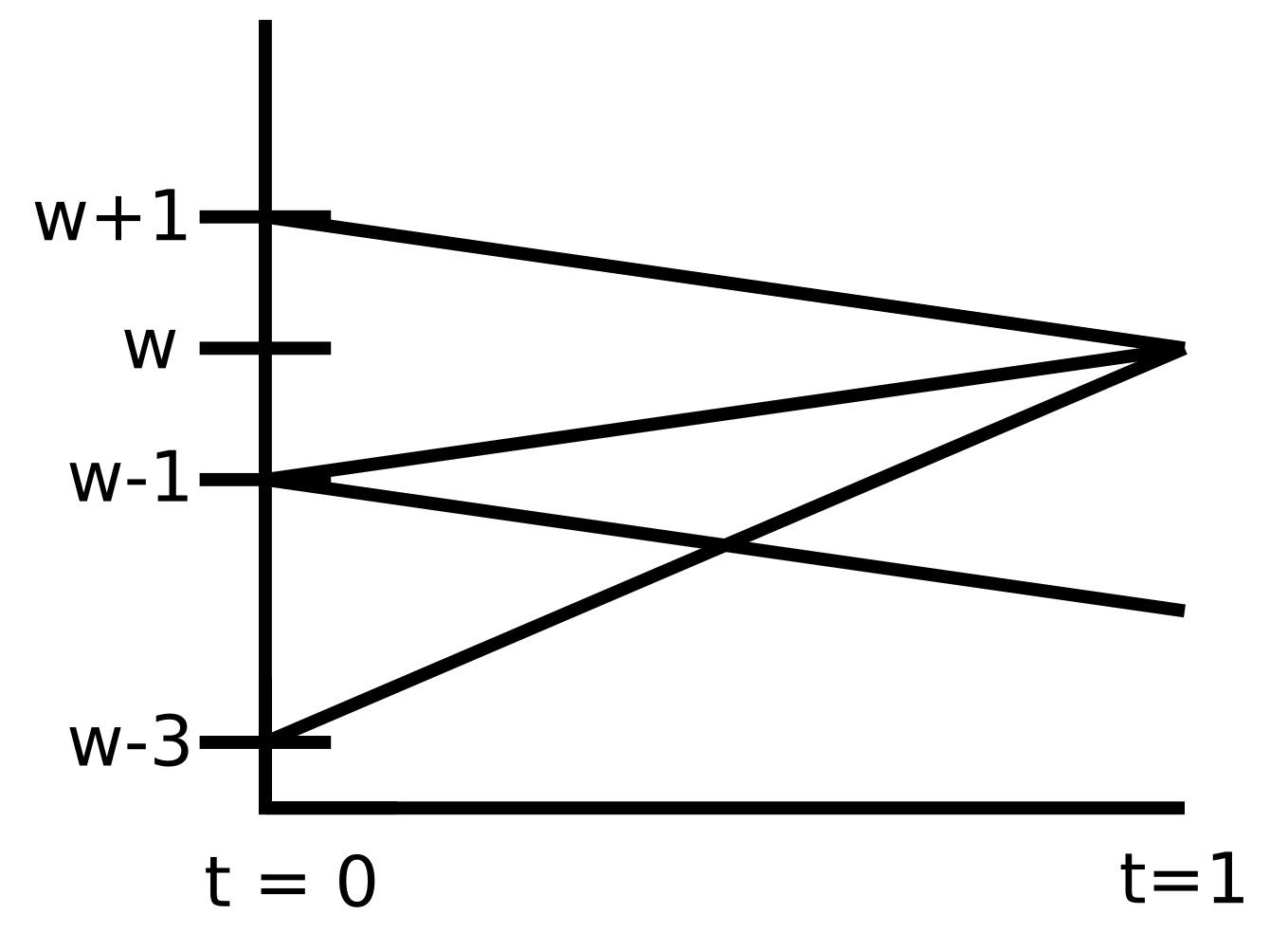}
      \caption{Possible generators for a 3-braid}\label{3braidexa}
\end{figure}

Following the process described in Theorem~\ref{dtn} for a 3-braid shows that the shape $d_t$ is restricted by the paths to $(1,w( \widehat{\beta}))$ in Figure~\ref{3braidexa}. Examining the figure, there are exactly four paths, three with constant slope and a single path that starts at $w-1$ with slope $-1$ until it intersects the line with slope $3$ and then follows the line with slope $3$ to the endpoint $(1,w( \widehat{\beta}))$. Note that the generators on these two lines are in different subcomplexes. So then this single path with non-constant slope can not be the shape of $d_t$ because it is ruled out by Proposition~\ref{techprop}. 

\end{proof}

\subsection{Application to the Upsilon invariant}\label{Upsilon}

The concordance invariant $\Upsilon_K(t)$ of a knot is defined in an algebraically analogous way to the annular Rasmussen $d_t$ invariant but with a $\mathbb{Z} \oplus \mathbb{Z}$ filtered complex $\CFK^\infty(K)$ coming from knot Floer homology. Because the two invariants are defined in a similar way, some of the perspective used to prove Theorem~\ref{dtn} can be used to prove a similar statement about $\Upsilon_K(t)$.

The invariant $\Upsilon_K(t)$ was originally defined by Ozsv\'{a}th, Stipsicz, and Szab\'{o} in~\cite{ozsvath_concordance_2017} and reinterpreted by Livingston in~\cite{livingston_notes_2017}. A complete definition of the invariant $\Upsilon_K(t)$ would be outside the scope of the present paper but a reader can refer to Livingston's article~\cite{livingston_notes_2017} for a definition in terms of a $\mathbb{Z} \oplus \mathbb{Z}$ filtered complex $\CFK^\infty(K)$ filtered by the Alexander filtration $j$ and the Algebraic filtration $i$. The following facts about $\Upsilon_K(t)$ are all that are needed for our application.

\begin{prop}[\cite{livingston_notes_2017}]\label{UpsilonFacts}
The piecewise linear function $\Upsilon_K(t)$ has the following properties:
\begin{enumerate}
\item $\Upsilon_K(0) = 0$.
\item  $\Upsilon'_K(t) = \pm ( i(x) - j(x)) $ for some generator $x \in \CFK^\infty(K)$.
\item $\Upsilon'_K(t) \in \{ -g(K) , -g(K) + 1 , \ldots, g(K) - 1, g(K) \}$ where $g(K)$ is the 3-dimensional genus of $K$.
\item  $\Upsilon_K(t)$ is symmetric about $t = 1$.
\item  $\Upsilon_K(t)$ is a concordance invariant.
\end{enumerate}
\end{prop}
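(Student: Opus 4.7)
My plan is to work directly from Livingston's formulation of $\Upsilon_K(t)$. I will introduce the one-parameter family of filtrations on $\CFK^\infty(K)$ given by $\mathcal{F}_t(x) = (1 - t/2)\, i(x) + (t/2)\, j(x)$ and set
\[
\Upsilon_K(t) = -2 \max\{ \mathcal{F}_t(\xi) : \xi \text{ is a cycle whose class generates the } U\text{-tower in } H_*(\CFK^\infty(K))\}.
\]
With this definition in hand, properties (1)--(3) are essentially formal consequences, while (4) and (5) reduce to known symmetries of $\CFK^\infty$. My strategy is therefore to state the definition carefully, verify (1)--(3) in sequence as bookkeeping, and then cite the two structural inputs needed for (4) and (5).

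I would verify (1), (2), and (3) in that order. At $t = 0$ the filtration $\mathcal{F}_0$ coincides with the algebraic filtration $i$, and the tower generator admits a cycle representative with $i = 0$ (this is essentially the definition of the $U$-action), giving $\Upsilon_K(0) = 0$. For (2), each homogeneous generator $x$ contributes the linear function $-2\mathcal{F}_t(x) = -2 i(x) + t(i(x) - j(x))$ whose slope is $i(x) - j(x)$; since $\Upsilon_K(t)$ is a piecewise linear extremum over finitely many such lines coming from cycle representatives, its one-sided derivatives must be of the form $\pm(i(x) - j(x))$ for some generator $x$. For (3), the Alexander-grading bound $|j(x) - i(x)| \le g(K)$ for nonzero generators of $\CFK^\infty(K)$ (a consequence of the genus bound on Alexander gradings in knot Floer homology) immediately restricts these slopes to the stated integer range.

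Properties (4) and (5) are more substantial. For (4), I would invoke the conjugation symmetry of the knot Floer complex, which provides a filtered chain homotopy equivalence swapping the roles of the $i$ and $j$ filtrations; under this swap $\mathcal{F}_t$ is sent to $\mathcal{F}_{2 - t}$, and optimizing cycle representatives are carried to each other, yielding $\Upsilon_K(t) = \Upsilon_K(2 - t)$. For (5), the key input is the concordance invariance of the $(i,j)$-filtered chain homotopy type of $\CFK^\infty(K)$ in the sense of Ozsv\'{a}th--Stipsicz--Szab\'{o}: a concordance from $K$ to $K'$ induces bifiltered chain maps in both directions that carry tower generator to tower generator, and pulling optimal cycle representatives back and forth shows the two $\mathcal{F}_t$-maxima agree. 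This last step is the one I expect to be the main obstacle, since it is where all of the non-formal topology lives; the remaining four properties amount to unpacking definitions once this input is taken as given, and I would simply cite the arguments of Ozsv\'{a}th--Stipsicz--Szab\'{o} and Livingston for the filtered concordance invariance itself.
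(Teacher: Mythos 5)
The paper does not actually prove Proposition~\ref{UpsilonFacts}; it is quoted verbatim from Livingston's notes, and your sketch is essentially a reconstruction of the standard arguments given there (slopes of the supporting lines $-2\mathcal{F}_t(x)$ for item (2), the Alexander-grading genus bound for item (3), conjugation symmetry of $\CFK^\infty$ for item (4), and filtered concordance invariance \`a la Ozsv\'ath--Stipsicz--Szab\'o for items (1) and (5)). So your route matches the intended one. The only point to watch is the convention in your displayed definition: with Livingston's filtration $\mathcal{F}_t$, the filtration level of a cycle is the \emph{maximum} over its homogeneous terms, and $\Upsilon_K(t)$ is $-2$ times the \emph{minimum} of this quantity over cycle representatives of the tower generator; writing a single ``max'' over representatives only works if you flip the sign convention on the filtration (as this paper does for $d_t$), so fix one convention and carry it through items (1) and (2) consistently.
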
 

Because of the symmetry about $t=1$ we restrict our focus to the value of $\Upsilon_K(t)$ for $t$ in the interval $[0,1]$.

\begin{thm}\label{UpsilonFinite}
For a fixed concordance genus $c$ there are finitely many possibilities for $\Upsilon_K(t)$ for any $K$ of concordance genus $c$ and a method for enumerating all the possibilities.
\end{thm}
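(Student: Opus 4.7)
The plan is to run the strategy of Theorem~\ref{dtn} almost verbatim, with the Khovanov--Lee complex replaced by $\CFK^\infty(K)$ and the two filtrations $(j,j-2k)$ replaced by the filtrations $(i,j)$ of Livingston's formulation. By concordance invariance (Proposition~\ref{UpsilonFacts}(5)), within any concordance class of concordance genus $c$ I may replace $K$ by a concordant knot $K'$ of three-dimensional genus $c$; then $\Upsilon_K=\Upsilon_{K'}$, and property~(3) upgrades to the statement that every slope of $\Upsilon_K(t)$ on $[0,1]$ lies in the finite set $\{-c,-c+1,\ldots,c-1,c\}$.

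Properties~(1) and~(4) pin the graph at $(0,0)$ and reduce the problem to the interval $[0,1]$. The slope bound then confines the graph to the closed triangular region bounded by the two rays from $(0,0)$ of slopes $\pm c$. By property~(2), each linear piece of $\Upsilon_K(t)$ is traced by the linear function associated to a single generator $x\in\CFK^\infty(K)$, with slope $\pm(i(x)-j(x))$; the potential breakpoints of $\Upsilon_K$ are the $t$-values at which the linear traces of two such generators cross inside the triangular region. Enumerating the possible shapes of $\Upsilon_K$ thus amounts to (a) listing all admissible line segments inside this region coming from generators $x$ with $|i(x)-j(x)|\leq c$, and (b) listing all monotone piecewise linear paths from $(0,0)$ that remain in the region and are built from these admissible segments. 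For each fixed $c$, this is a finite combinatorial enumeration.

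The main obstacle is showing that the collection of admissible line segments is finite. In Theorem~\ref{dtn} this was automatic because the cube of resolutions has only finitely many distinguished generators, whereas $\CFK^\infty(K)$ is infinitely generated over $\mathbb{F}$. To get around this, I would replace $\CFK^\infty(K)$ by a filtered chain-homotopy equivalent reduced model whose finitely many generators have bigradings $(i(x),j(x))$ bounded in terms of $g(K)=c$; such reduced models, obtained by simplifying both the vertical and horizontal differentials, are standard in the knot Floer literature. Boundedness of the generator coordinates confines all potential line segments to a finite collection inside the triangular region, so the two-step enumeration above yields a finite explicit list of candidate shapes of $\Upsilon_K(t)$, together with an algorithm to produce them. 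Any combinatorially possible shape that cannot be realized is then discarded by inspecting the actual filtered chain-homotopy types of reduced models that can arise for knots of genus $c$, in the same spirit that Proposition~\ref{techprop} prunes the list of candidate shapes in the Khovanov--Lee setting.
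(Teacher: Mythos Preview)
Your overall strategy---reduce to a genus-$c$ representative by concordance invariance, confine the graph to the triangle via the slope bound, and enumerate paths along admissible segments---is exactly the paper's. The divergence is in how you establish that only finitely many line segments are admissible.

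The paper does this in one line from integrality, with no appeal to reduced models: at $t=0$ the relevant filtration is the algebraic filtration $i$, which is $\mathbb{Z}$-valued, and the slopes lie in the integer set $\{-c,\ldots,c\}$. Hence every generator's line has integer value at $t=1$ as well, and any such line that enters the triangle must hit $t=1$ at an integer in $[-c,c]$. So the admissible lines are parametrized by a pair (value at $t=1$, slope) drawn from $\{-c,\ldots,c\}\times\{-c,\ldots,c\}$, a finite set. That is the whole argument; no pruning step analogous to Proposition~\ref{techprop} is carried out for $\Upsilon$.

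Your proposed workaround---replace $\CFK^\infty(K)$ by a filtered chain-homotopy equivalent complex with finitely many $\mathbb{F}$-generators whose bigradings are bounded---does not exist as stated. The total homology of $\CFK^\infty(K)$ is $HF^\infty(S^3)\cong\mathbb{F}[U,U^{-1}]$, which is infinite-dimensional over $\mathbb{F}$, so no $\mathbb{F}$-finitely-generated complex can be filtered chain-homotopy equivalent to it. You can pass to a finite model over $\mathbb{F}[U,U^{-1}]$, but then the $\mathbb{F}$-generators $U^kx$ still have unbounded $(i,j)$; what saves you is that the $U$-translates contribute only finitely many \emph{distinct} lines inside the triangle, and seeing that brings you right back to the integrality-plus-slope-bound observation above. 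So the fix is to drop the reduced-model detour and argue directly from integrality of $i$ and of the slope.
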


\begin{proof}

Because $\Upsilon_K(t)$ is a concordance invariant, we can assume, after possibly choosing a different knot in its concordance class, that the 3-dimensional genus of $K$ is $c$.

At $t = 0$ the filtration is the Algebraic filtration $i$ which is a $\mathbb{Z}$ filtration so all the generators sit at some integer filtration level. Additionally, the slope of the generators is in the set $\{ - c , -c + 1, \ldots , c - 1 , c\}$ and so then also at $t = 1$ the generators all sit at some integer filtration level.

We know that $\Upsilon_K(0) = 0$ and so then $-tc \leq \Upsilon_K(t) \leq tc$ for $t \in [0,1]$ by the bounds on the slope $\Upsilon'_K(t)$. So then we just draw all the possible generators that will sit in this triangle between the points $(0,0)$, $(1 , c)$ and $(1 , -c)$. There are two lines from $(0,0)$ to $(1,c)$ and $(1, -c)$. For all the other generators at $t = 1$ the lines must pass through a point in the set $\{ (1, -c +1) , (1 , -c+2) , \ldots , ( 1 , c - 2) , (1,c-1)\}$ and have slope in the set $\{ - c , -c + 1, \ldots , c - 1 , c\}$. So if we draw all these lines then the possibilities for $\Upsilon_K(t)$ are all paths with non-decreasing values of $t$ from $(0,0)$ to the line $t = 1$ along these lines.

\end{proof}
\begin{figure}

  \centering
     \begin{overpic}[width=0.5\textwidth]{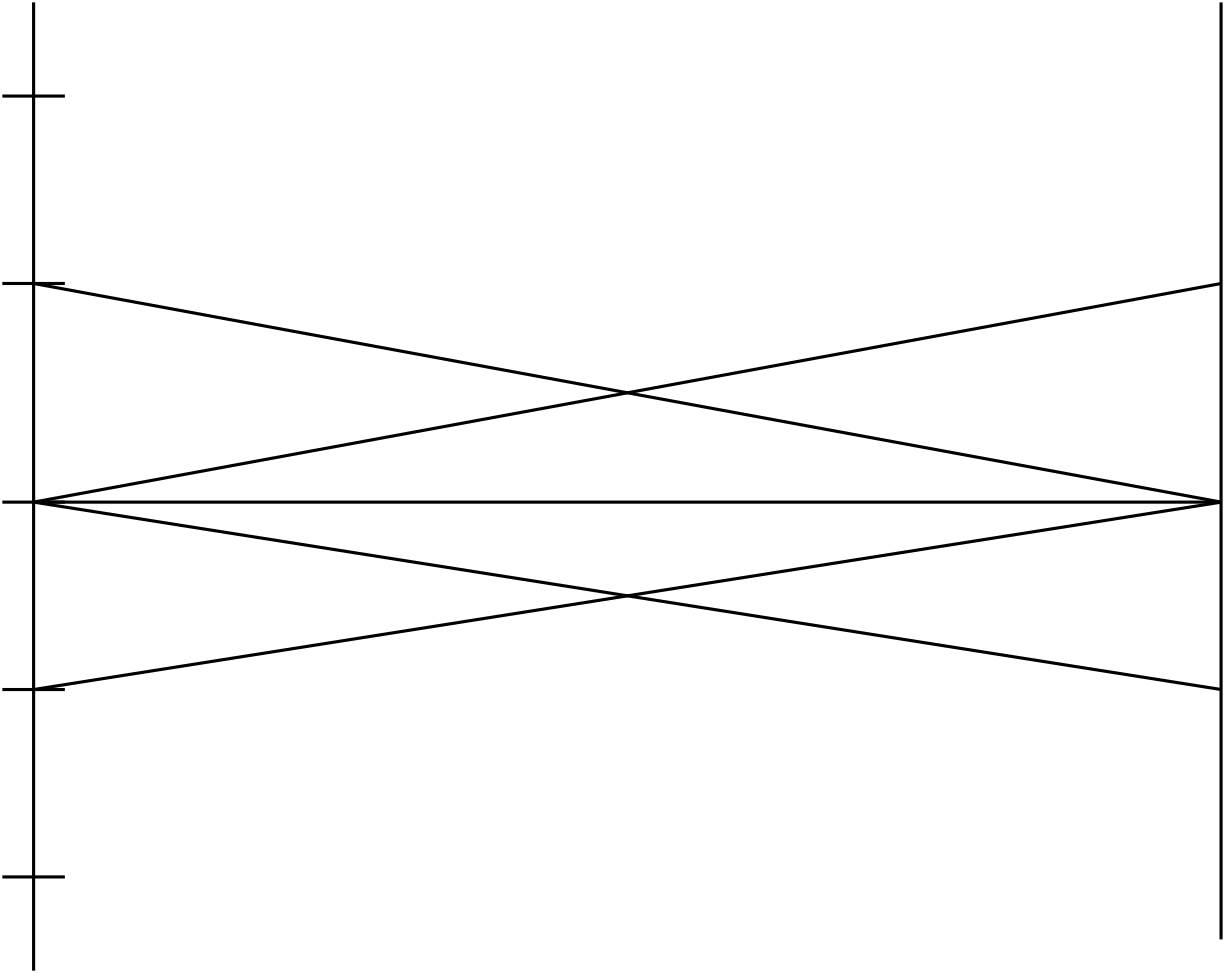}
    \put(-5,37){$0$}
     \put(-5,54){$1$}
     \put(-7,22){$-1$}
     \put(-3,-4){$t = 0$}
      \put(95,-4){$t = 1$}
    \end{overpic}
      \caption{Possible shapes of $\Upsilon_K(t)$ for $g_c(K) = 1$}\label{Upsilon1}
\end{figure}
\begin{example}
When $c = 1$ applying the process of drawing lines from Theorem~\ref{UpsilonFinite} gives the graph shown in Figure~\ref{Upsilon1}. Counting all paths shows that there are exactly five possibilities for $\Upsilon_K(t)$ if $K$ has concordance genus 1.
\end{example}

\begin{rem}
The number of possibilities for $\Upsilon_K(t)$ given by the process in Theorem~\ref{UpsilonFinite} seems to grow much faster than the number of possible shapes of $d_t(\widehat{\beta})$. For example focusing on $K$ with concordance genus 2, the restrictions from Theorem~\ref{UpsilonFinite} still allow for over 50 possibilities for $\Upsilon_K(t)$. 
\end{rem}

\section{The annular Rasmussen invariants of 3-braids}\label{s3braids}

For a 3-braid $\beta$, the $s$ invariant of $ \widehat{\beta}$ is either $w( \widehat{\beta}) - 2$, $w( \widehat{\beta})$, or $w( \widehat{\beta}) + 2$ depending on if the slope of $d_t$ in $(0,1)$ is $3,1$, or $-1$. The following theorems completely classify when each of these possibilities occur.

\begin{thm}\label{slope3}
A 3-braid $\beta$ has $s( \widehat{\beta}) = w( \widehat{\beta}) - 2 $ if and only if $\beta$ is conjugate to a braid of the form: \begin{enumerate}
\item $h^d\sigma_1 \sigma_2^{-a_1} \cdots \sigma_1 \sigma_2^{-a_n}$ with $a_i \geq 0$ and some $a_i > 0$ and $d > 0$.
\item $h^d \sigma_2^m$ with $m \in \Z$ and either $d = 0$ $m \geq 0$, $d = 1$  $m \geq -4$, or $d > 1$.
\item $h^d \sigma_1^m \sigma_2^{-1}$ with $m \in \{ -1 ,-2, -3\}$ and $d > 0$.
\end{enumerate}
\end{thm}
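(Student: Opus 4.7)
The plan is to route the theorem through Lemma~\ref{psi3}: the three families listed are exactly those 3-braids with $\psi(\widehat{\beta}) \neq 0$, so the task reduces to proving the equivalence ``$s(\widehat{\beta}) = w(\widehat{\beta}) - 2$ if and only if $\psi(\widehat{\beta}) \neq 0$'' for 3-braids. The forward direction falls out of the setup already in place: if $s(\widehat{\beta}) = w(\widehat{\beta}) - 2$, then Theorem~\ref{dt3} forces $d_t(\widehat{\beta}) = w(\widehat{\beta}) - 3 + 3t$, whose slope $3$ on $[0,1)$ is the maximum possible for a 3-braid, and the Grigsby--Licata--Wehrli result cited after Proposition~\ref{slands} then yields $\psi(\widehat{\beta}) \neq 0$.

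The reverse direction is driven by a single cobordism reduction. If $\beta$ is obtained from some $\beta_0$ by inserting $k$ additional positive crossings, then smoothing those crossings produces a cobordism between $\widehat{\beta}$ and $\widehat{\beta_0}$ with $k$ odd-index critical points, so Proposition~\ref{cobordism} evaluated at $t = 0$ gives $|s(\widehat{\beta}) - s(\widehat{\beta_0})| \leq k$. Combined with $w(\widehat{\beta}) = w(\widehat{\beta_0}) + k$ and the assumption $s(\widehat{\beta_0}) = w(\widehat{\beta_0}) - 2$, this forces $s(\widehat{\beta}) \leq w(\widehat{\beta}) - 2$, and the lower bound $s(\widehat{\beta}) \geq w(\widehat{\beta}) - 2$ supplied by Theorem~\ref{dt3} pins down the equality.

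Applying this engine requires only a handful of base cases. Quasipositive 3-braids satisfy $s = w - n + 1 = w - 2$ by the standard Bennequin--Plamenevskaya equality, which covers family~3 with $d > 0$ and the braid $h\sigma_2^{-4}$ of family~2. From these, family~2 with $d = 1$ and $m > -4$ is reached by appending $m + 4$ positive copies of $\sigma_2$ to $h\sigma_2^{-4}$, family~1 with $d > 1$ is reached from family~1 with $d = 1$ by appending full twists $h$, and family~2 with $d > 1$ reduces to family~1 via the same chain of positive-crossing insertions invoked in the proof of Lemma~\ref{psi3}. In each case the cobordism bound above delivers $s = w - 2$.

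The technical heart of the argument, and the step I expect to be the main obstacle, is establishing $s(\widehat{\beta}) = w(\widehat{\beta}) - 2$ for the quasi-alternating braids of family~1 with $d = 1$, namely $h\sigma_1 \sigma_2^{-a_1} \cdots \sigma_1 \sigma_2^{-a_n}$. These are quasi-alternating by Baldwin's argument (Remark~7.6 of~\cite{baldwin_khovanov_2010}) but are not in general quasipositive, so they cannot be produced from a quasipositive base by adding only positive crossings, and the cobordism reduction above does not apply. To handle this case I would compute $s$ directly by exploiting that quasi-alternating links have thin Khovanov homology and hence have $s$-invariant determined by the signature (via $s = -\sigma$ for knots, together with the Beliakova--Wehrli extension to links), and then evaluate $\sigma$ inductively on the syllable count $\sum (1 + a_i)$ starting from the simplest case $h\sigma_1 \sigma_2^{-1}$. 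Verifying the formula $\sigma = 2 - w$ uniformly across this two-parameter family is the delicate calculation that the proof will hinge on.
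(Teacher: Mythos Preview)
Your overall architecture matches the paper's: reduce to Lemma~\ref{psi3} via Proposition~\ref{slands} for the forward direction, and for the reverse direction propagate from a few base cases by adding positive crossings (your cobordism bound plus the lower bound from Theorem~\ref{dt3} is exactly the content of Lemma~\ref{crossings}). The quasipositive base cases and the chains of positive-crossing insertions you describe are the same ones the paper uses.

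The only substantive divergence is your treatment of family~1 with $d=1$, and here you are working harder than necessary. The paper dispatches this case without any signature computation: since these closures are quasi-alternating their Khovanov homology is thin, supported on the two diagonals $j-2i\in\{s-1,s+1\}$; the nonvanishing class $\psi$ sits in bidegree $(0,w-3)$, so $w-3\in\{s-1,s+1\}$, i.e.\ $s\in\{w-2,w-4\}$; and the general lower bound $s\geq w-n+1=w-2$ (equivalently the slope bound built into Theorem~\ref{dt3}) then forces $s=w-2$. So the information ``thin'' and ``$\psi\neq 0$'' together already pin down $s$, and no inductive evaluation of $\sigma$ is needed. Your proposed route via $s=-\sigma$ would work for the knots in this family, but as you note, many of these closures are links, and the thin-link relationship between $s$ and $\sigma$ involves a shift that you would have to track carefully; the paper's argument sidesteps this entirely.
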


\begin{thm}\label{slope-1}
A 3-braid $\beta$ has $s( \widehat{\beta}) = w( \widehat{\beta}) + 2 $ if and only if the mirror of $\beta$ is conjugate to a braid of the form:
\begin{enumerate}
\item $h^d\sigma_1 \sigma_2^{-a_1} \cdots \sigma_1 \sigma_2^{-a_n}$ with $a_i \geq 0$ and some $a_i > 0$ and $d > 0$.
\item $h^d \sigma_2^m$ with $m \in \Z$ and either $d = 1$  $m \geq -3$, or $d > 1$.
\item $h^d \sigma_1^m \sigma_2^{-1}$ with $m \in \{ -1 ,-2, -3\}$ and $d > 0$.
\end{enumerate}

\end{thm}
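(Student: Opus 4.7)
The strategy is to reduce Theorem~\ref{slope-1} to Theorem~\ref{slope3} using mirror symmetry of the Rasmussen invariant. Since the mirror map $m: B_3 \to B_3$ negates writhe and commutes with conjugation, and since Proposition~\ref{mirror} gives $s(\widehat{m(\beta)}) = -s(\widehat{\beta})$ whenever $\widehat{\beta}$ is a knot, the condition $s(\widehat{\beta}) = w(\widehat{\beta}) + 2$ for knot braid closures is equivalent to $s(\widehat{m(\beta)}) = w(\widehat{m(\beta)}) - 2$. By Theorem~\ref{slope3}, the latter holds if and only if $m(\beta)$ is conjugate to a braid in Theorem~\ref{slope3}'s list. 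Hence Theorem~\ref{slope-1} for knot closures reduces to comparing the two lists, restricted to the entries that produce knot closures.

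To carry out this comparison, I would analyze the underlying permutation in $S_3$ of braids from each of Murasugi's three families, using that $h = (\sigma_1 \sigma_2)^3$ has trivial permutation. In families 1 and 3 the two lists agree, and one verifies by the permutation analysis that these entries are compatible with the mirror reduction. In family 2, the braid $h^d \sigma_2^m$ has knot closure exactly when $m$ is odd; the discrepancies between Theorem~\ref{slope3}'s family 2 (which includes $d=0$, $m \geq 0$ and $d = 1$, $m = -4$) and Theorem~\ref{slope-1}'s family 2 (which omits these) correspond precisely to braids whose closures are multi-component links.

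For these link cases Proposition~\ref{mirror} no longer applies directly, and I would handle them by invoking Theorem~\ref{dt3} together with direct computation. Since Theorem~\ref{dt3} constrains $s - w$ to $\{-2, 0, 2\}$, ruling out two of the three possibilities for a given link closure determines $s$ uniquely. For each of the finitely many link closures where the mirror reduction leaves ambiguity, I would compute $s$ directly from the Lee chain complex, or, for split closures like the one arising from the trivial braid, via Proposition~\ref{DisjointUnion} together with the $s$-invariant of unlinks in the Beliakova--Wehrli convention, verifying that Theorem~\ref{slope-1}'s list correctly identifies those with $s = w + 2$.

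The main obstacle will be the link case, where the clean formula $s(m(K)) = -s(K)$ is no longer available and the asymmetry between the two lists must be accounted for. Once the permutation data for each Murasugi family is tabulated and the handful of explicit link $s$-values are in hand, matching the two lists is mechanical; the principal care lies in correctly identifying which of the extra entries in Theorem~\ref{slope3}'s family 2 become absorbed, after mirroring, into the excluded cases rather than contributing a new family 2 entry for Theorem~\ref{slope-1}.
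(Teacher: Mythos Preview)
Your reduction via mirror symmetry is the right starting point, and the paper uses it too, but your plan has a genuine gap in how the link closures are handled. The claim that $h^d\sigma_2^m$ closes to a knot exactly when $m$ is odd is incorrect: since $h$ is a pure braid and $\sigma_2^m$ permutes only strands $2$ and $3$, the closure has two components when $m$ is odd and three when $m$ is even, so \emph{no} braid in family~2 closes to a knot. More broadly, families~1 and~3 also contain infinitely many link closures (e.g.\ $h^d\sigma_1\sigma_2^{-2}$ in family~1, and $h^d\sigma_1^{-2}\sigma_2^{-1}$ in family~3), so the set of cases where Proposition~\ref{mirror} does not apply is not finite and cannot be dispatched by a handful of direct computations.

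What the paper does instead, and what your proposal is missing, is to use Lemma~\ref{crossings}: for $3$-braids, adding a positive crossing can only preserve or increase the slope of $d_t$. This lets one propagate the conclusion $s=w+2$ from a single well-chosen braid $\beta$ (either a knot, so that Proposition~\ref{mirror} applies, or a link whose $s$ is read off directly from its Khovanov homology in homological grading~$0$) to every $\beta'$ obtained from $\beta$ by deleting positive crossings. Concretely, within each family the paper adds negative crossings to $m(\beta')$ until it reaches such a $\beta$; for family~2 with $d=1$ the anchor is the explicit computation $s(\widehat{h\sigma_2^{-3}})=w+2$. Without this crossing-addition monotonicity, your mirror argument establishes the theorem only for the knot closures in the list, leaving the infinitely many link closures unproved. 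The ``only if'' direction is likewise not addressed in your proposal; in the paper it is deferred to the proof of Theorem~\ref{slope1}.
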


\begin{thm}\label{slope1}
All other 3-braids have $s( \widehat{\beta}) = w( \widehat{\beta}) $. 
\end{thm}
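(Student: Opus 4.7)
The plan is to obtain Theorem~\ref{slope1} as a direct corollary of the trichotomy established in Theorem~\ref{dt3} together with the two preceding characterization theorems. By Theorem~\ref{dt3}, every 3-braid closure satisfies $d_t(\widehat{\beta}) \in \{w - 3 + 3t,\ w - 1 + t,\ w + 1 - t\}$ on $[0,1]$, so setting $t = 0$ and using $d_0(\widehat{\beta}) = s(\widehat{\beta}) - 1$ forces $s(\widehat{\beta}) \in \{w(\widehat{\beta}) - 2,\ w(\widehat{\beta}),\ w(\widehat{\beta}) + 2\}$. Thus the only thing to show is that a 3-braid $\beta$ avoiding both of the characterization lists must land in the middle case.

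Concretely, the argument I would give is as follows. Let $\beta$ be a 3-braid such that $\beta$ is not conjugate to any of the braids enumerated in Theorem~\ref{slope3}, and such that $m(\beta)$ is not conjugate to any of the braids enumerated in Theorem~\ref{slope-1}. The reverse direction (``only if'') of Theorem~\ref{slope3} then gives $s(\widehat{\beta}) \neq w(\widehat{\beta}) - 2$, and the reverse direction of Theorem~\ref{slope-1} gives $s(\widehat{\beta}) \neq w(\widehat{\beta}) + 2$. Combined with the trichotomy from Theorem~\ref{dt3}, the remaining possibility is $s(\widehat{\beta}) = w(\widehat{\beta})$, as claimed.

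The only background verification is that the phrase ``all other 3-braids'' is unambiguous, which follows from Murasugi's classification: every 3-braid is conjugate to a unique-family representative, so the exclusion conditions from Theorems~\ref{slope3} and~\ref{slope-1} carve out a well-defined complementary set inside Families 1, 2, and 3. One could, if desired, unwind the three families to describe this complementary set explicitly (for instance, Family~1 with $d \leq 0$ combined with certain $d \leq 0$ or low-$m$ cases of Families~2 and~3, together with the mirror constraints), but this is cosmetic and not needed for the logical argument.

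There is no serious obstacle in this step: the theorem is essentially the residual case of a trichotomy whose two ``extreme'' cases have already been classified in Theorems~\ref{slope3} and~\ref{slope-1}. The main work of Section~\ref{s3braids} lies in those two theorems and in the preceding restriction on the shape of $d_t$; Theorem~\ref{slope1} is just the bookkeeping that the three cases of Theorem~\ref{dt3} exhaust all possibilities.
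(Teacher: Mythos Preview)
Your argument is circular. You invoke the ``only if'' direction of Theorem~\ref{slope-1} to rule out $s(\widehat{\beta}) = w(\widehat{\beta})+2$, but if you look at the paper's proof of Theorem~\ref{slope-1}, that direction is not established there: the proof ends with the sentence ``To complete the proof that these are the only 3-braids with $s(\widehat{\beta}) = w(\widehat{\beta}) + 2$ we will show that all other 3-braids have $s(\widehat{\beta}) = w(\widehat{\beta})$.'' In other words, the ``only if'' half of Theorem~\ref{slope-1} is deduced \emph{from} Theorem~\ref{slope1}, not the other way around. So Theorem~\ref{slope1} is not mere bookkeeping; it is where the actual work for that direction lives.

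Concretely, the paper's proof of Theorem~\ref{slope1} has to handle several cases that your argument skips entirely. For split braids (family~2 with $d=0$, $m<0$) it uses the disjoint-union formula Proposition~\ref{DisjointUnion}. For the single braid $h^{-1}\sigma_2^4$ it requires the explicit computation of Section~\ref{computation}. For the remaining braids with $\psi(\widehat{\beta}) = 0 = \psi(\widehat{m(\beta)})$, it uses a crossing-addition argument via Lemma~\ref{crossings}: one adds negative crossings to reach a braid $\beta$ that closes to a knot (so that Proposition~\ref{mirror} applies and forces $s=w$), and then the vanishing of $\psi$ together with Proposition~\ref{slands} pins down $s(\widehat{\beta}') = w(\widehat{\beta}')$ for the original braid. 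None of this is available to you if you only cite the trichotomy and the two characterization theorems.
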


\begin{rem}
The example of the braids $h^{1}\sigma_2^{-4}$ and $h^{-1}\sigma_2^4$ show that there is no simple formula for the behavior of $d_t$ under mirroring for links. It is not known if there is a simple formula for the behavior of $d_t$ under mirroring for knots.
\end{rem}

The following property of the $d_t$ invariant of 3-braids is useful for the computation $s$ invariant of 3-braid closures.

\begin{lem}\label{crossings}
Adding positive crossings to a 3-braid can only preserve or increase the slope of $d_t( \widehat{\beta})$ in the interval $(0,1)$. Similarly adding negative crossings to a 3-braid can only preserve or decrease the slope of $d_t( \widehat{\beta})$ in the interval $(0,1)$.
\end{lem}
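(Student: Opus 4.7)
The plan is to reduce the statement to a comparison of $d_t$ at the two endpoints $t = 0$ and $t = 1$, exploiting the fact that for 3-braid closures the function $d_t$ is linear on $[0,1]$ (Theorem~\ref{dt3}). In particular, the slope $m_t(\widehat{\beta})$ on $(0,1)$ equals the single-interval rise $d_1(\widehat{\beta}) - d_0(\widehat{\beta})$, so it suffices to control how those two values respond to the insertion of a crossing.

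First I would handle the positive-crossing case. Suppose $\beta'$ is obtained from $\beta$ by inserting a single positive generator $\sigma_i$. The oriented $0$-resolution of that extra crossing is the identity tangle on two strands, so it turns $\beta'$ back into $\beta$. This produces a cobordism from $\widehat{\beta'}$ to $\widehat{\beta}$ with exactly one saddle, and Proposition~\ref{cobordism} then yields
\[
\bigl|d_t(\widehat{\beta'}) - d_t(\widehat{\beta})\bigr| \leq 1 \qquad \text{for all } t \in [0,1].
\]
Meanwhile $d_1(\widehat{\gamma}) = w(\widehat{\gamma})$ for every braid $\gamma$, so
\[
d_1(\widehat{\beta'}) - d_1(\widehat{\beta}) = w(\widehat{\beta'}) - w(\widehat{\beta}) = 1.
\]
Subtracting these gives
\[
m_t(\widehat{\beta'}) - m_t(\widehat{\beta}) = 1 - \bigl(d_0(\widehat{\beta'}) - d_0(\widehat{\beta})\bigr) \geq 0,
\]
which is exactly the desired monotonicity.

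The negative-crossing statement will then follow either by running the identical argument with the one-saddle cobordism oriented the other way (so that the $t=1$ difference is $-1$ rather than $+1$ and the same $d_0$ bound now forces $m_t$ to weakly decrease) or by applying the positive-crossing case to the mirror braid together with the standard mirror symmetry of the construction.

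The only step that calls for real care, and is therefore the mild obstacle, is verifying that inserting a single positive generator corresponds to a cobordism with exactly one odd-index critical point in the sense Proposition~\ref{cobordism} requires. This is a standard picture, since the $0$-resolution of a positive crossing is the identity two-strand tangle and the induced surface is a single band attachment, so no genuine difficulty arises; the rest of the argument is linear arithmetic in the two quantities $d_0(\widehat{\beta})$ and $d_1(\widehat{\beta})$ made available by Theorem~\ref{dt3}.
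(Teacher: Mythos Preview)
Your argument is correct and is essentially the paper's own proof: both use the single-saddle cobordism to get $|d_0(\widehat{\beta'}) - d_0(\widehat{\beta})| \leq 1$, note that $d_1$ changes by exactly $+1$ since it equals the writhe, and then invoke the linearity of $d_t$ on $[0,1]$ for 3-braid closures (Theorem~\ref{dt3}) to read off the slope as $d_1 - d_0$. The paper phrases the last step slightly more informally (``if equality holds then same slope, otherwise more positive slope'') but the content is identical.
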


\begin{rem}
This fact about adding crossings and how that changes the slope of the $d_t$ invariant is a specific property of 3-braids and not true in general. For example, for 4-braids it should be possible to begin with a braid whose shape is shown in the top right of Table~\ref{fig:4braids} add a positive crossing and get a braid whose shape has constant slope of $2$. This would decrease the slope for some values of $t$.
\end{rem}
\begin{proof}
Let $\beta$ be a 3-braid and consider $\beta\sigma_i$ the result of adding a positive crossing, notice that there is a cobordism from $\beta$ to $\beta\sigma_i$ with a single index one critical point. Now by Proposition~\ref{cobordism}, $d_1( \widehat{\beta}) +1= w( \widehat{\beta}) +1  = d_1( \widehat{\beta \sigma_i})$ which means that if $d_0(\widehat{\beta\sigma_i}) = d_0(\widehat{\beta}) +1$ then the two braids have the same $d_t$ slope. The bound on $d_t$ from cobordisms gives that $d_0(\widehat{\beta\sigma_i}) \leq d_0( \widehat{\beta}) +1$. If equality holds then $d_t$ of $\widehat{\beta\sigma_i}$ has the same slope as that of $ \widehat{\beta}$ and if equality does not hold then $d_t$ of $\widehat{\beta\sigma_i}$ has a more positive slope than that of $ \widehat{\beta}$. The same argument proves the case of adding a negative crossing. 
\end{proof}

With this lemma, we are now ready to prove Theorems~\ref{slope3},~\ref{slope-1},~and~\ref{slope1}.

\begin{proof}[Proof of Theorem~\ref{slope3}]
If a 3-braid $\beta$ has $s( \widehat{\beta}) = w( \widehat{\beta}) - 2$ then $\psi( \widehat{\beta}) \not= 0$ by Proposition~\ref{slands} so the braids listed are the only possible 3-braids with $s( \widehat{\beta}) = w( \widehat{\beta}) - 2$ by Lemma~\ref{psi3}.

The braids in the family 1 with $d = 1$ are all quasi-alternating and have $\psi( \widehat{\beta}) \not = 0$ so they have $s( \widehat{\beta}) = w( \widehat{\beta}) - 2 $. For $d > 1$ then the braids can be obtained from a braid with $d = 1$ by adding positive crossings and so they also have $s( \widehat{\beta}) = w( \widehat{\beta}) - 2 $.

In family 2, when $d > 1$ then the braids can be obtained from the braids in the first family by adding positive crossings and so these braids have maximal $d_t$ slope on (0,1) and $s( \widehat{\beta}) = w( \widehat{\beta}) - 2$ by Lemma~\ref{crossings}. If $d=1$ and $m = -4$ then the braid is quasipositive and so $s( \widehat{\beta}) = w( \widehat{\beta}) - 2$ by Proposition~\ref{slands}. Adding positive crossings shows that the $s( \widehat{\beta}) = w( \widehat{\beta}) - 2$ also holds when $m > -4$ by Lemma~\ref{crossings}. Finally if $d = 0$ and $m\geq 0$ then the braids are quasipositive and so have $s( \widehat{\beta}) = w( \widehat{\beta}) -2$.

The braids in family 3 are all quasipositive and so they have $s( \widehat{\beta}) = w( \widehat{\beta}) - 2$ by Proposition~\ref{slands}.
\end{proof}

\begin{proof}[Proof of Theorem~\ref{slope-1}]
First, notice that if $\beta$ is a 3-braid whose closure is a knot and $m(\beta)$ has $\psi(m( \widehat{\beta})) \not = 0$ then $s( \widehat{\beta}) = - s(m( \widehat{\beta})) = -(w(m( \widehat{\beta})) - 2) = w( \widehat{\beta}) +2$ by Proposition~\ref{mirror}. More generally if $\beta'$ is any 3-braid where it is possible to obtain $\beta$ as above by adding positive generators to $\beta'$ then $d_0( \widehat{\beta}') = w + 1$ because of Lemma~\ref{crossings}. Also, adding positive generators to $\beta'$ is equivalent to adding negative generators to $m(\beta')$ and arriving at $m(\beta)$.

If $m(\beta')$ is conjugate to a braid in family 1 and has $d>0 $ for $m(\beta')$ then you can add negative crossings to $m(\beta')$ to get to the braid $m(\beta) = h^d \sigma_1 \sigma_2^{-n}$ with $d>1$ and then after maybe adding an additional negative crossing $m(\beta)$ closes up to a knot and $m( \widehat{\beta})$ also has $\psi$ non-vanishing. Then $\beta$ has $s( \widehat{\beta}) = w( \widehat{\beta}) + 2$ and so then the same holds for $ \widehat{\beta}'$.

If $m(\beta')$ is conjugate to a braid in family 2 with $\psi(m( \widehat{\beta}')) \not= 0 $ and $d > 1$ then you can add negative crossings to the mirror to get to a braid $m(\beta)$ in family 1 with $d \geq 1$ by canceling out all of a full twist except a single $\sigma_1$. Then $\beta$ has $s( \widehat{\beta}) = w( \widehat{\beta}) + 2$ and so then the same holds for $ \widehat{\beta}'$.

For $\beta$ with $d = 1$ and $m = -3$ then the Khovanov homology of $ \widehat{\beta}$ in homological grading 0 has dimension 2 and is supported in quantum gradings -2 and 0. This implies that $s( \widehat{\beta}) = -1 = w( \widehat{\beta}) + 2$. For $\beta'$ with $d = 1$ and $m > -3$, it is possible to add negative crossings to $m(\beta')$ to arrive at $m(\beta)$ and so $s( \widehat{\beta}') = w( \widehat{\beta}') + 2$ as well.

If $m(\beta')$ is conjugate to a braid in family 3 and has $\psi(m( \widehat{\beta}')) \not = 0 $ then you can add negative crossings to $m(\beta')$ to get to $m(\beta) = h^d \sigma_1^{-3} \sigma_2^{-1}$ which is a knot with $\psi( m(\widehat{\beta})) \not = 0$. So then $\beta$ has $s( \widehat{\beta}) = w( \widehat{\beta}) + 2$ and so then the same holds for $ \widehat{\beta}'$.

To complete the proof that these are the only 3-braids with $s( \widehat{\beta}) = w( \widehat{\beta}) + 2$ we will show that all other $3$ braids have $s( \widehat{\beta}) = w( \widehat{\beta})$.
\end{proof}

\begin{proof}[Proof of Theorem~\ref{slope1}] 

If $\beta$ is conjugate to a braid in family 2 with $d= 0$ and $m < 0$ then $\beta$ is conjugate to a split braid. Specifically it is the union of a trivial braid on a single strand $\mathbbm{1}_1$ and a braid $\alpha$ with $\vert m\vert $ half-twists on two strands. Proposition~\ref{DisjointUnion} implies that $d_0(\widehat{\beta}) =d_0(\widehat{\mathbbm{1}_1}) + d_0(\widehat{\alpha}) = -1 + ( w(\widehat{\alpha)}  ) = w(\widehat{\beta})-1$ and so then $s(\widehat{\beta}) = w(\widehat{\beta})$.

If $\beta$ is conjugate to the braid $h^{-1}\sigma_2^4$ in family 2 then an explicit computation shows that $s(\widehat{\beta}) = -2 = w(\widehat{\beta})$. This computation is included in Section~\ref{computation}.

All remaining 3-braids have $\psi(\widehat{\beta}) = 0 = \psi(m(\widehat{\beta}))$. 

First notice that if $\beta$ is a 3-braid whose closure is a knot and $\psi(\widehat{\beta})= 0  = \psi(m(\widehat{\beta}))$ then $s(\widehat{\beta}) = w(\widehat{\beta})$ by Proposition~\ref{mirror}. More generally if $\beta'$ is a 3-braid with $\psi(\widehat{\beta}') = 0 $ and it is possible to add negative crossings to $\beta'$ to arrive at $\beta$ with $s(\widehat{\beta}) = w(\widehat{\beta})$ then $s(\widehat{\beta}') = w(\widehat{\beta}')$ as well. This is because by Lemma~\ref{crossings} we know that $s(\widehat{\beta}') = w(\widehat{\beta}')$ or $w(\widehat{\beta}') - 2$ but the fact that $\psi(\widehat{\beta}')  = 0$ rules out the second possibility by Proposition~\ref{slands}. Finally, adding negative crossings to $\beta'$ is the same as adding positive crossings to $m(\beta')$.

Notice that if we add negative crossings to $\widehat{\beta}'$ to arrive at $\widehat{\beta}$ and $\psi(\widehat{\beta}') = 0 = \psi(m(\widehat{\beta}'))$ then the functoriality of $\psi$ implies that $\psi(\widehat{\beta})= \pm f(\psi(\widehat{\beta}')) = 0$ so we only need to check that $\psi(m(\widehat{\beta})) = 0$ as well if $\beta$ closes up to a knot. 

For $m(\beta')$ is conjugate to a braid in family 1 with $\psi(\widehat{\beta}') = 0 = \psi(m(\widehat{\beta}')) $, you can add positive crossings to $m(\beta')$ to get to $h^d \sigma_1^k \sigma_2^{-1} \sigma_1^j$ with $d \leq 0$ and after possibly adding another positive crossing this is a braid $m(\beta)$ in family 1 whose closure is a knot and has $\psi(\widehat{\beta}) = 0 = \psi(m(\widehat{\beta}))$. So then $s(\widehat{\beta}') = w(\widehat{\beta}')$. Note that we only require that $d \leq 0$ and don't identify specific values of $k,j,$ and $d$ which satisfy $\psi(\widehat{\beta}) = 0 = \psi(m(\widehat{\beta}))$ but as long as the braid $\widehat{\beta}'$ we started with satisfies $\psi(\widehat{\beta}') = 0 = \psi(m(\widehat{\beta}')) $ then we know that the braid $\widehat{\beta}$ that we obtain will as well.

For $m(\beta')$ is conjugate to a braid in family 2 with $\psi(\widehat{\beta}') = 0 = \psi(m(\widehat{\beta}'))$, we have that either $\beta' = h \sigma_2^{k}$ for $k \leq -5$ or $\beta' = h^{-1} \sigma_2^{k}$ for $k \geq 5$. If $\beta'$ is of the form $\beta' = h^{-1} \sigma_2^{k}$ for $k \geq 5$ then it is possible to add negative crossings to $\beta'$ and arrive at $\beta = h^{-1} \sigma_2^{5}$. Examining the Khovanov homology of $\widehat{\beta}$ in homological grading $0$, it has dimension $1$ in quantum grading $-2$ and dimension $2$ in quantum grading $0$ and is zero in all other quantum gradings. This means that $s(\widehat{\beta}) = -1 = w(\widehat{\beta})$ and so then $s(\widehat{\beta}') = w(\widehat{\beta}')$ as well for all $\beta' = h^{-1} \sigma_2^{k}$ with $k \geq 5$.

If $\beta' = h \sigma_2^{k}$ for $k \leq -5$ then you can rewrite $\beta'$ as the word $\sigma_1 \sigma_2^2 \sigma_1 \sigma_2^{k+2}$ and then you can add negative crossings to $\beta'$ to arrive at the braid $\beta = \sigma_2^{k+2} $ which has $s(\widehat{\beta}) = w(\widehat{\beta})$ and so then $s(\widehat{\beta}') = w(\widehat{\beta}')$.

For $m(\beta')$ is conjugate to a braid in family 3 with $\psi(\widehat{\beta}') = \psi(m(\widehat{\beta}')) = 0$, you can add positive crossings to $m(\beta')$ and get to the braid $m(\beta) = h^d \sigma_1^{-1} \sigma_2^{-1}$ with $ d \leq 0$ which is a knot with $\psi(m(\widehat{\beta})) = 0 =\psi(\widehat{\beta})$, so then $s(\widehat{\beta}') = w(\widehat{\beta}')$.

\end{proof}

\section{Comparisons with other invariants of 3-braids}\label{compare} 

The results of Theorems~\ref{psi3},~\ref{slope3} and~\ref{slope-1} show a close connection between $d_t$ and $\psi$ for 3-braids.

\begin{cor}
A 3-braid $\beta$ has $\psi(\widehat{\beta}) \not= 0 $ if and only if $d_t(\widehat{\beta})$ has constant maximal slope, i.e. $s(\widehat{\beta}) = w(\widehat{\beta}) - 2$.
\end{cor}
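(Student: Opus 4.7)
The plan is to observe that this corollary essentially amounts to reading off the two classification results that have already been set up, and then translating the numerical condition $s(\widehat{\beta}) = w(\widehat{\beta}) - 2$ into the shape statement about $d_t$.

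First, I would handle the easy direction. If $s(\widehat{\beta}) = w(\widehat{\beta}) - 2$, then writing this as $s(\widehat{\beta}) - 1 = w(\widehat{\beta}) - 3$ puts us exactly in the hypothesis of Proposition~\ref{slands} for braid index $n = 3$, so $\psi(\widehat{\beta}) \neq 0$. (Alternatively, one could quote the strengthening noted in the paragraph after Proposition~\ref{slands}: for a 3-braid, $d_t$ having slope $3$ on $[0,1)$ forces $\psi \neq 0$.)

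For the converse, the strategy is to compare classifications. Lemma~\ref{psi3} lists exactly the 3-braids with $\psi(\widehat{\beta}) \neq 0$ in terms of Murasugi's families, and Theorem~\ref{slope3} lists exactly the 3-braids with $s(\widehat{\beta}) = w(\widehat{\beta}) - 2$ in terms of the same families. The key observation is that the two lists coincide on the nose (family 1 with $d > 0$; family 2 with either $d = 0, m \geq 0$, or $d = 1, m \geq -4$, or $d > 1$; and family 3 with $d > 0$). Hence $\psi(\widehat{\beta}) \neq 0$ forces $\beta$ into Theorem~\ref{slope3}'s list and thus $s(\widehat{\beta}) = w(\widehat{\beta}) - 2$.

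Finally, the reformulation "$d_t(\widehat{\beta})$ has constant maximal slope" is equivalent to "$s(\widehat{\beta}) = w(\widehat{\beta}) - 2$" by Theorem~\ref{dt3}, which says $d_t$ is determined by which of the three values $\{w-2, w, w+2\}$ the $s$ invariant takes, and the case $s = w-2$ is exactly the constant-slope-$3$ shape. There is no real obstacle here; the corollary is a direct bookkeeping consequence, and the only thing to verify carefully is that the enumerations in Lemma~\ref{psi3} and Theorem~\ref{slope3} match family-by-family, which they do by design.
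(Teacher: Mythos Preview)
Your proposal is correct and matches the paper's approach: the paper states this corollary without a separate proof, noting only that it follows from Lemma~\ref{psi3} and Theorem~\ref{slope3} (whose lists are identical), together with Theorem~\ref{dt3} for the translation into the slope statement. Your additional observation that the implication $s(\widehat{\beta}) = w(\widehat{\beta}) - 2 \Rightarrow \psi(\widehat{\beta}) \neq 0$ already follows directly from Proposition~\ref{slands} is a nice shortcut, but the argument is otherwise exactly the bookkeeping the paper intends.
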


\begin{cor}
If $\beta$ is a non-split 3-braid with $\psi(m(\widehat{\beta})) \not = 0$ then $\beta$ is conjugate to $h^{-1} \sigma_2^4$ or $d_t(\widehat{\beta})$ has constant minimal slope, i.e. $s(\widehat{\beta}) = w+2$.
\end{cor}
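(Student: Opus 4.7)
The plan is to assemble the conclusion by directly comparing the classification of 3-braids with $\psi \neq 0$ (Lemma~\ref{psi3}) against the classification of 3-braids with $s(\widehat{\beta}) = w(\widehat{\beta}) + 2$ (Theorem~\ref{slope-1}). Since the hypothesis concerns $\psi(m(\widehat{\beta}))$, I will apply Lemma~\ref{psi3} to the braid $m(\beta)$ rather than to $\beta$ itself, and then interpret the resulting families back through the mirror.

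First I would apply Lemma~\ref{psi3} to $m(\beta)$. This yields three possible conjugacy-type families for $m(\beta)$: family 1 with $d>0$; family 2 with either $(d=0, m\geq 0)$, $(d=1, m\geq -4)$, or $d>1$; and family 3 with $d>0$. Next I would place this list side-by-side with the list in Theorem~\ref{slope-1}, which characterizes precisely when $s(\widehat{\beta}) = w(\widehat{\beta})+2$ in terms of the conjugacy family of $m(\beta)$. Families 1 and 3 match exactly, and in family 2 the two lists agree on $d>1$ and on $(d=1, m\geq -3)$. So the only gaps lie in family~2, in exactly two subcases: (i) $d=0$ with $m\geq 0$, and (ii) $d=1$ with $m=-4$.

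The remaining work is to dispatch these two leftover subcases. In case (i), $m(\beta)$ is conjugate to $\sigma_2^m$ with $m\geq 0$, which in $B_3$ is a disjoint union of the trivial 1-braid on the first strand and a 2-braid on the last two strands; hence its closure is a split link. Since mirroring preserves splitness (the homomorphism $m\colon B_3 \to B_3$ carries the disjoint-union decomposition to a disjoint-union decomposition), $\widehat{\beta}$ is split as well, contradicting the non-split hypothesis. In case (ii), $m(\beta)$ is conjugate to $h\sigma_2^{-4}$; using that $m(h)=h^{-1}$ (both sides are central in $B_3$ with abelianization $-6$, and the center of $B_3$ is $\langle h\rangle$), applying $m$ to both sides gives that $\beta$ is conjugate to $h^{-1}\sigma_2^{4}$, which is exactly the exceptional braid in the statement. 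In every other case Theorem~\ref{slope-1} applies and gives $s(\widehat{\beta})=w(\widehat{\beta})+2$, i.e.\ $d_t(\widehat{\beta})$ has constant minimal slope.

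The main obstacle is really just bookkeeping: verifying that the family 2 discrepancy between Lemma~\ref{psi3} and Theorem~\ref{slope-1} consists of exactly the two subcases above, and checking that mirroring interchanges $h\sigma_2^{-4}$ and $h^{-1}\sigma_2^{4}$ rather than producing something more subtle. Once the identity $m(h)=h^{-1}$ and the split/non-split dichotomy for $\sigma_2^m$ are in hand, the corollary is immediate.
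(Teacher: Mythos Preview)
Your proposal is correct and follows exactly the approach the paper intends: the corollary is stated without proof as an immediate consequence of comparing the classification in Lemma~\ref{psi3} (applied to $m(\beta)$) against Theorem~\ref{slope-1}, and your case-by-case bookkeeping is precisely that comparison. The only discrepancies in family~2 are the split case $d=0$ and the single braid $h\sigma_2^{-4}$, which you handle correctly via the non-split hypothesis and the identity $m(h)=h^{-1}$.
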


An open problem is if $\psi$ is an ``effective" transverse invariant, that is if it contains more information about transverse links than the self-linking number and the smooth link type. A weaker question which is also unknown is if vanishing/non-vanishing of $\psi$ contains more information than the $s$ invariant and the self-linking number. Birman and Menasco showed that there are transversly non-isotopic 3-braid closures with the same underlying smooth link type and the same self linking number~\cite{birman_stabilization_2006} so it is meaningful to ask if a transverse invariant is effective for 3-braid closures.
\begin{cor}\label{NotEffective}
The invariant $\psi$ is not effective for 3-braid closures. In particular, the vanishing/non-vanishing of $\psi$ for 3-braids is determined by the $s$ invariant and the self-linking number.
\end{cor}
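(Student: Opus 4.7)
My plan is to prove this by directly comparing the classification in Theorem~\ref{slope3} with the classification in Lemma~\ref{psi3}. First I would observe that for a 3-braid $\beta$, the self-linking number of the transverse representative is $sl(\widehat{\beta}) = w(\widehat{\beta}) - 3$, so specifying the pair $(s(\widehat{\beta}), sl(\widehat{\beta}))$ is equivalent to specifying the pair $(s(\widehat{\beta}), w(\widehat{\beta}))$, and hence to specifying the integer $s(\widehat{\beta}) - w(\widehat{\beta}) \in \{-2, 0, 2\}$ (by Theorem~\ref{dt3}).

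Next, I would simply match the two lists. The three families of 3-braids characterized in Theorem~\ref{slope3} as having $s(\widehat{\beta}) = w(\widehat{\beta}) - 2$ are \emph{exactly} the three families characterized in Lemma~\ref{psi3} as having $\psi(\widehat{\beta}) \neq 0$. Therefore, for any 3-braid $\beta$, the non-vanishing of $\psi(\widehat{\beta})$ is equivalent to the condition $s(\widehat{\beta}) - w(\widehat{\beta}) = -2$, which by the previous paragraph is a function of $s(\widehat{\beta})$ and $sl(\widehat{\beta})$ alone. This establishes the ``in particular'' statement of the corollary.

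For the effectiveness conclusion, I would recall that the smooth link type of $\widehat{\beta}$ determines $s(\widehat{\beta})$, while $sl(\widehat{\beta})$ is the self-linking number. Hence the pair $(s(\widehat{\beta}), sl(\widehat{\beta}))$ is determined by the underlying smooth link type together with the self-linking number. Combined with the previous paragraph, vanishing/non-vanishing of $\psi$ for a 3-braid closure is determined by this same data, so $\psi$ cannot distinguish any two transverse 3-braid closures sharing the same smooth link type and self-linking number. Since Birman and Menasco exhibited transversely non-isotopic 3-braid closures with matching smooth link type and self-linking number~\cite{birman_stabilization_2006}, the invariant $\psi$ fails to be effective on 3-braid closures.

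There is essentially no obstacle here beyond carefully verifying that the two enumerations — the one in Theorem~\ref{slope3} and the one in Lemma~\ref{psi3} — are term-for-term identical (family by family, including the ranges of $d$ and $m$ in family 2, where both list $d=0, m\geq 0$; $d=1, m\geq -4$; or $d>1$). Once this matching is confirmed, the corollary is immediate and no further argument is required.
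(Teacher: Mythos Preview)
Your proposal is correct and matches the paper's approach: the paper observes (as the corollary immediately preceding Corollary~\ref{NotEffective}) that the lists in Lemma~\ref{psi3} and Theorem~\ref{slope3} coincide, so $\psi(\widehat{\beta})\neq 0$ if and only if $s(\widehat{\beta})=w(\widehat{\beta})-2$, and then invokes Birman--Menasco's examples to conclude non-effectiveness. Your write-up is slightly more explicit about the translation between $w$ and $sl$, but the argument is the same.
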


Along with the $d_t$ invariant and $\psi$ invariant, there are other invariants that have been previously computed for 3-braids. Two examples are the transverse invariant from knot Floer homology $\widehat{\Theta}$ and the contact invariant of double branched covers of 3-braids $c(T,\phi)$.

\begin{thm}[Theorem 4.1 of~\cite{plamenevskaya_braid_2018} ]
For a 3-braid $\beta$, the invariant $\widehat{\Theta}(\widehat{\beta}) $ is non-zero if and only if $\beta$ is conjugate to a braid of the following form:
\begin{enumerate}
\item $h^d\sigma_1 \sigma_2^{-a_1} \cdots \sigma_1 \sigma_2^{-a_n}$ with $a_i \geq 0$ and some $a_i > 0$ and $d > 0$.
\item $h^d \sigma_2^m$ with $m \in \Z$ and either $d = 0$ $m \geq 0$ or $d \geq 1$.
\item $h^d \sigma_1^m \sigma_2^{-1}$ with $m \in \{ -1 ,-2, -3\}$ and $d > 0$.
\end{enumerate}

\end{thm}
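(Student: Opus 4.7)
My plan is to mirror the proof of Lemma~\ref{psi3}, substituting the knot Floer transverse invariant $\widehat{\Theta}$ for the Khovanov invariant $\psi$ and using the analogous structural properties throughout. First, I would verify that the three families listed in the statement coincide exactly with the right-veering 3-braids in Murasugi's normal form; this is essentially the same combinatorial fact used to kill cases in the proof of Lemma~\ref{psi3} (compare Murasugi's list with the families appearing there). Combined with the knot Floer analog of Proposition~\ref{RV} — namely that $\widehat{\Theta}(\widehat{\beta})$ vanishes whenever $\beta$ is not right-veering, which in $\widehat{HFK}$ is a theorem obtained via the LOSS invariant of a Legendrian approximation — this disposes of the ``only if'' direction uniformly.

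For the ``if'' direction I would rely on three tools: (a) $\widehat{\Theta}(\widehat{\beta}) \neq 0$ for every quasipositive braid $\beta$, the analog of Proposition~\ref{slands}; (b) functoriality under adding a positive crossing, the analog of Proposition~\ref{functorial}, so that non-vanishing of $\widehat{\Theta}$ is preserved when positive generators are multiplied on; and (c) an explicit $\widehat{HFK}$ computation for a small set of base cases. Family~3 with $d>0$ and family~2 with $d=0$, $m\geq 0$ are quasipositive, so (a) applies directly. For family~1 with $d=1$ the closures are quasi-alternating and a direct identification of $\widehat{\Theta}$ in $\widehat{HFK}$ (or invocation of the quasi-alternating case in Baldwin's comparison theorem) gives $\widehat{\Theta}\neq 0$; the $d>1$ cases then follow from (b). Family~2 with $d\geq 2$ reduces to family~1 after rewriting $h=\sigma_1\sigma_2\sigma_1\sigma_2\sigma_1\sigma_2$ and adding positive crossings, and family~2 with $d=1$, $m\geq -4$ is conjugate to a quasipositive braid so (a) applies again.

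The main obstacle, and the case that genuinely distinguishes $\widehat{\Theta}$ from $\psi$, is family~2 with $d=1$ and $m\leq -5$: here $\psi$ vanishes by Lemma~\ref{psi3}, so no combinatorial reduction from a quasipositive base case via (a) and (b) is available, yet the statement asserts that $\widehat{\Theta}$ remains non-vanishing. My plan for this case is to compute $\widehat{\Theta}(\widehat{h\sigma_2^{-5}})$ directly from a grid diagram adapted to the braid word: present the closure as a grid, identify the canonical generator whose homology class represents $\widehat{\Theta}$, and verify by inspection of the differential in the appropriate Alexander grading that it is a cycle not hit by any boundary. The cases $m\leq -6$ then follow by a direct argument in $\widehat{HFK}$ using a skein-type long exact sequence for $h\sigma_2^m\mapsto h\sigma_2^{m-1}$. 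I expect this explicit grid computation to be the technical heart of the proof, since the content of the theorem in this range is a knot-Floer phenomenon with no Khovanov analog, and thus no purely formal argument transplanted from the proof of Lemma~\ref{psi3} can succeed there.
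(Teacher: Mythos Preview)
The paper does not contain a proof of this statement. It is quoted verbatim as Theorem~4.1 of~\cite{plamenevskaya_braid_2018} in Section~\ref{compare}, purely for the purpose of comparing the knot Floer transverse invariant $\widehat{\Theta}$ against the $\psi$ and $d_t$ computations carried out earlier; no argument for it is given or even sketched. So there is nothing in the paper to compare your proposal against.

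That said, a brief comment on your strategy. Your reduction of most cases to quasipositivity plus functoriality is sound and does parallel Lemma~\ref{psi3}. The genuine content, as you correctly identify, is family~2 with $d=1$ and $m\leq -5$, where the two invariants diverge. Your plan there --- a single grid computation for $m=-5$ followed by a skein-type inductive step for $m\leq -6$ --- is plausible in outline but underspecified: the skein/crossing-change maps in knot Floer homology do not in general send $\widehat{\Theta}$ to $\widehat{\Theta}$ in the direction you need (adding a \emph{negative} crossing), so you would have to argue carefully that non-vanishing propagates downward in $m$, which is the opposite direction from the functoriality you invoke in (b). The actual proof in~\cite{plamenevskaya_braid_2018} proceeds more conceptually, via the equivalence (for 3-braids) between non-vanishing of $\widehat{\Theta}$ and right-veering, using fractional Dehn twist coefficient bounds and contact-geometric input rather than case-by-case grid computations. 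If you want a self-contained argument along your lines, the $d=1$, $m\leq -5$ case will require either a uniform computation for the whole family (these closures are twist knots/links, so their $\widehat{HFK}$ is known explicitly) or an appeal to the right-veering criterion --- not an induction on $m$ via negative crossing changes.
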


\begin{thm}[Theorem 4.2 of~\cite{baldwin_heegaard_2008}]
For a 3-braid $\beta$, the contact invariant $c(T,\phi)$ of $\Sigma(\widehat{\beta})$ is non-vanishing if and only if $\widehat{\Theta}(\widehat{\beta}) $ is non-vanishing.
\end{thm}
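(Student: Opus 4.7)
The plan is to identify, for each conjugacy class of 3-braid in Murasugi's form, whether the contact invariant $c(T,\phi)$ vanishes, and to match the resulting list against the classification for $\widehat{\Theta}$ in the preceding theorem. The geometric input is the branched double cover construction: the double cover of $D^2$ over $3$ marked points is the once-punctured torus $T$, and the hyperelliptic involution embeds $B_3 \hookrightarrow \Mod(T)$. Under this embedding $\sigma_1, \sigma_2$ become positive Dehn twists about dual basis curves on $T$ and $h = (\sigma_1 \sigma_2)^3$ becomes the boundary Dehn twist, so $\Mod(T) \cong SL_2(\Z) \ltimes \langle \partial \rangle$, and a 3-braid $\beta$ lifts to a monodromy $\phi$ giving $\Sigma(\widehat{\beta})$ a natural open book $(T,\phi)$ supporting the lift $\xi$ of the standard tight contact structure on $S^3$.

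For the vanishing direction, I would argue that if $\beta$ is not on one of the listed families then $\beta$ fails to be right-veering as an element of $\Mod(D_3)$; its lift $\phi$ then also fails to be right-veering on $T$, so $(T,\phi)$ supports an overtwisted contact structure by Honda--Kazez--Mati\'c and $c(T,\phi)=0$. The non-right-veering analysis underlying Lemma~\ref{psi3} handles most cases, and the remaining borderline families can be dispatched by exhibiting an explicit negative stabilization in the open book.

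For the non-vanishing direction, families 1 (with $d > 0$) and 3 lift to products of positive Dehn twists, so $(T,\phi)$ is a positive open book supporting a Stein-fillable contact structure and $c(T,\phi) \neq 0$ by Ozsv\'ath--Szab\'o. The hard part will be family 2, the braids $h^d \sigma_2^m$ with $d = 0, m \geq 0$ or $d \geq 1$: the lifted monodromy is generally not a positive factorization, so fillability is not visible from the word. I would handle these by identifying $\Sigma(\widehat{\beta})$ as a small Seifert fibered space (or lens space) with a known list of tight contact structures, then either compute $c(T,\phi)$ from a Heegaard diagram adapted to the open book or recognize $\xi$ inside the classification of tight contact structures on that manifold. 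Cross-referencing the resulting list with the $\widehat{\Theta}$ classification completes the proof.
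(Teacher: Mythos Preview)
This theorem is not proved in the present paper; it is quoted verbatim from Baldwin's paper \cite{baldwin_heegaard_2008} and used only as input for the comparison in Section~\ref{compare}. So there is no ``paper's own proof'' to compare against here---the relevant comparison is with Baldwin's original argument.

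That said, your outline contains a genuine gap. You assert that the braids in family~1 with $d>0$ ``lift to products of positive Dehn twists,'' and deduce Stein fillability and hence $c(T,\phi)\neq 0$. This is false in general. The branched-cover map $B_3 \to \Mod(T)$ is an isomorphism, and every nonseparating simple closed curve on the once-punctured torus is conjugate to one of the two standard curves; thus $\phi$ admits a positive Dehn-twist factorization in $\Mod(T)$ if and only if $\beta$ is quasipositive in $B_3$. But family~1 with $d>0$ contains braids of arbitrarily negative writhe---for instance $h\,\sigma_1\sigma_2^{-a}$ has writhe $7-a$---and quasipositive braids have nonnegative writhe. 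So most of family~1 with $d=1$ is \emph{not} quasipositive, the lifted open book is not a positive factorization, and Stein fillability is unavailable as a mechanism for non-vanishing. (Indeed, many of the resulting contact $3$-manifolds are known not to be fillable at all, yet still have $c\neq 0$.)

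Baldwin's actual proof does not proceed via fillability for these cases. He works directly in Heegaard Floer homology, using a genus-one Heegaard diagram adapted to the open book $(T,\phi)$ together with the surgery exact triangle to compute $c(T,\phi)$ across the Murasugi families; the non-vanishing for family~1 with $d>0$ comes out of that computation rather than from any positivity of the monodromy. Your plan for family~2 (identify the double branched cover as a small Seifert fibered space and compute) is closer in spirit to what is actually needed, and you should expect to apply the same kind of direct Heegaard Floer argument to family~1 as well.
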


The following statements summarize how the four invariants, $d_t$, $\psi$, $\widehat{\Theta}$, and $c(T,\phi)$, compare for 3-braids.

\begin{cor}
The following 3-braids have $s(\widehat{\beta}) = w(\widehat{\beta}) -2$, $\psi(\widehat{\beta}) \not = 0$, $\widehat{\Theta}(\widehat{\beta}) \not= 0 $ and $c(T,\phi) \not = 0$:
\begin{enumerate}
\item $h^d\sigma_1 \sigma_2^{-a_1} \cdots \sigma_1 \sigma_2^{-a_n}$ with $a_i \geq 0$ and some $a_i > 0$ and $d > 0$.
\item $h^d \sigma_2^m$ with $m \in \Z$ and either $d = 0$ $m \geq 0$, $d = 1$  $m \geq -4$, or $d > 1$.
\item $h^d \sigma_1^m \sigma_2^{-1}$ with $m \in \{ -1 ,-2, -3\}$ and $d > 0$.
\end{enumerate}
\end{cor}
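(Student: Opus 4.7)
The plan is to observe that this corollary is essentially a bookkeeping consequence of the results already established or cited in the paper, and so the proof will be a short verification that the three lists in the statement coincide with (or are contained in) the relevant classifications.

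First I would note that the three families of braids listed in the corollary are \emph{exactly} the list appearing in Theorem~\ref{slope3} characterizing 3-braids with $s(\widehat{\beta}) = w(\widehat{\beta}) - 2$, and they are also \emph{exactly} the list appearing in Lemma~\ref{psi3} characterizing 3-braids with $\psi(\widehat{\beta}) \neq 0$. Thus the assertions $s(\widehat{\beta}) = w(\widehat{\beta}) - 2$ and $\psi(\widehat{\beta}) \neq 0$ follow immediately, with no additional work, from those two results.

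Next I would verify that every braid in the corollary's list also appears in Plamenevskaya's list for $\widehat{\Theta}(\widehat{\beta}) \neq 0$. This is a term-by-term comparison of the two enumerations: family 1 is identical ($d > 0$ in both), family 3 is identical ($d > 0$ in both), and for family 2 the conditions ``$d = 0$ and $m \geq 0$, $d = 1$ and $m \geq -4$, or $d > 1$'' are a subset of Plamenevskaya's conditions ``$d = 0$ and $m \geq 0$, or $d \geq 1$,'' since $d = 1$ with $m \geq -4$ and $d > 1$ are both cases of $d \geq 1$. Hence $\widehat{\Theta}(\widehat{\beta}) \neq 0$ by the cited Theorem~4.1 of~\cite{plamenevskaya_braid_2018}.

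Finally, the non-vanishing of $c(T,\phi)$ for the double branched cover follows immediately from the equivalence $c(T,\phi) \neq 0 \Leftrightarrow \widehat{\Theta}(\widehat{\beta}) \neq 0$ stated in Theorem~4.2 of~\cite{baldwin_heegaard_2008}. There is no real obstacle in this proof: the only thing to be careful about is matching conventions and ensuring the set-containment in family~2 is the right direction, which is straightforward from the listed inequalities. So the whole argument is two or three sentences invoking Theorem~\ref{slope3}, Lemma~\ref{psi3}, the cited Plamenevskaya theorem, and the cited Baldwin theorem.
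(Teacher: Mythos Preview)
Your proposal is correct and matches the paper's approach: the corollary is stated without proof in the paper precisely because it is an immediate bookkeeping consequence of Theorem~\ref{slope3}, Lemma~\ref{psi3}, and the two cited theorems on $\widehat{\Theta}$ and $c(T,\phi)$, and your term-by-term verification (including the containment check in family~2) is exactly what is needed.
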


\begin{cor}
The 3-braids $h\sigma_2^{m}$ with $m \leq -5$ have $\widehat{\Theta}(\widehat{\beta}) \not= 0 $ and $c(T,\phi) \not = 0$ but $\psi(\widehat{\beta}) = 0$ and $s(\widehat{\beta}) = w(\widehat{\beta})$.
\end{cor}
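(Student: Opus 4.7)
The plan is to verify each of the four claims by applying the relevant theorems already established, since the corollary collects facts that have each been independently proven for the specific braid family $h\sigma_2^m$ with $m \leq -5$. Set $\beta = h\sigma_2^m$; this is a braid in family 2 of Murasugi's classification with $d = 1$.

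First, for the two knot Floer/contact statements: the cited Theorem 4.1 of~\cite{plamenevskaya_braid_2018} asserts $\widehat{\Theta}(\widehat{\beta}) \neq 0$ for every family 2 braid with $d \geq 1$ and arbitrary $m \in \mathbb{Z}$, so this applies directly to our $\beta$. The cited Theorem 4.2 of~\cite{baldwin_heegaard_2008} then upgrades this to $c(T,\phi) \neq 0$ for $\Sigma(\widehat{\beta})$. Both invocations are immediate and require no additional argument.

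Next, the vanishing of $\psi$ is handled by Lemma~\ref{psi3}: in family 2 with $d = 1$, the $\psi$ invariant is non-zero precisely when $m \geq -4$. Since $m \leq -5$ violates this bound, $\psi(\widehat{\beta}) = 0$.

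Finally, to show $s(\widehat{\beta}) = w(\widehat{\beta})$ I would combine Theorems~\ref{slope3} and~\ref{slope-1}. By Theorem~\ref{slope3}, an $s$-value of $w - 2$ in family 2 with $d=1$ requires $m \geq -4$, so this option is ruled out. For $s = w + 2$ one uses Theorem~\ref{slope-1}, where one must examine the mirror $m(\beta) = h^{-1}\sigma_2^{-m}$ with $-m \geq 5$; this braid has $d = -1$, which fails all three of the conditions listed in Theorem~\ref{slope-1} (each requires $d > 0$ or $d \geq 1$). Hence by Theorem~\ref{slope1}, $s(\widehat{\beta}) = w(\widehat{\beta})$. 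The only potentially non-trivial step is confirming that the mirror of $h\sigma_2^m$ really is $h^{-1}\sigma_2^{-m}$ rather than something requiring further conjugation — this is a routine computation since $m(h) = (\sigma_1^{-1}\sigma_2^{-1})^3 = h^{-1}$ and $m(\sigma_2^m) = \sigma_2^{-m}$. No serious obstacle arises; the corollary is essentially a bookkeeping consequence of the preceding theorems, and the proof can be recorded in a few lines.
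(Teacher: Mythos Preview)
Your proposal is correct and follows essentially the same approach as the paper: the corollary is stated without proof in the paper because it is a direct assembly of Lemma~\ref{psi3}, Theorems~\ref{slope3}--\ref{slope1}, and the two cited external theorems on $\widehat{\Theta}$ and $c(T,\phi)$, exactly as you have laid out. Your verification that the mirror lands in family~2 with $d=-1$ (hence outside the list in Theorem~\ref{slope-1}) is the only point requiring a moment's thought, and you handle it correctly.
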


For the case of 3-braids the Heegaard Floer invariants completely classify the braids as right-veering and non right-veering depending on if the invariants vanish or not. However the comparison shows that the invariants defined from Khovanov homology may detect slightly more subtle information about the conjugacy class of $\beta$ as an element of $\Mod(D_3)$ because the invariants vanish/have non-maximal slope on the closures of $h\sigma_2^{m}$ with $m \leq -5$ and these braids are right-veering but contain large amounts of negative twisting inside a fixed circle in $D_3$ fixed by $\beta$.

\section{Computation of $s(\widehat{h^{-1}\sigma_2^4})$}\label{computation} 
The computation of the $s$ invariant of the braid closure of $h^{-1}\sigma_2^4$ makes use of Bar-Natan's cobordism category, which allows for a ``divide and conquer" approach to calculations. Recently Schuetz wrote a paper~\cite{schuetz_fast_2018} where he described using a ``divide and conquer" approach to compute the $s$ invariant of knots. While the specific ideas in his paper are not used in our calculation, his paper did prompt the author to consider using a ``divide and conquer" approach to compute $s(\widehat{h^{-1}\sigma_2^4})$.

We will make repeated use of the ideas in~\cite{bar-natan_fast_2007}, which allow us to build up the formal Bar-Natan complex crossing by crossing. We use Bar-Natan's delooping and cancellation lemmas (Lemmas 4.1 and 4.2 of~\cite{bar-natan_fast_2007}) to simplify the complex as much as possible before adding the next crossing. For the convenience of the reader, statements of these lemmas are included here. Additionally, the local relations in Bar-Natan's cobordism category are shown in Figure~\ref{Relations}.

\begin{lem}[Lemma~4.1~of~\cite{bar-natan_fast_2007}]
If an object $S$ in the Bar-Natan cobordism category contains a circle, then $S$ is isomorphic to $S' \{ 1\} \oplus S'\{-1\}$ where $S'$ is the object obtained from $S$ by removing the circle and the shifts are applied to the quantum grading.
\end{lem}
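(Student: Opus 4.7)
The plan is to exhibit an explicit isomorphism in the Bar-Natan cobordism category by writing down cobordisms in both directions and then invoking the local relations (sphere, dotted sphere, and neck-cutting) recorded in Figure~\ref{Relations} to verify that the compositions give the identity.

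First, I would set up notation: write $S = S' \sqcup \bigcirc$ where $\bigcirc$ is the circle to be removed, and let $\alpha, \beta : S' \{1\} \oplus S'\{-1\} \rightleftarrows S$ be given by the $2\times 2$ matrices whose entries are elementary cobordisms in Bar-Natan's category. Concretely, $\alpha$ sends $S'\{1\}$ into $S$ by an undotted cup (creating the circle $\bigcirc$ from the empty $1$-manifold) and sends $S'\{-1\}$ into $S$ by a dotted cup; in the reverse direction, $\beta$ projects $S$ onto $S'\{1\}$ by a dotted cap and onto $S'\{-1\}$ by an undotted cap. Using Bar-Natan's quantum-degree convention for dotted cobordisms ($\chi$ minus twice the dot count, with appropriate shift), one checks that the degree shifts $\{\pm 1\}$ are precisely what is needed to make each of these four elementary cobordisms preserve the quantum grading.

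Second, I would verify that $\beta \circ \alpha = \id_{S'\{1\}\oplus S'\{-1\}}$. Each of the four matrix entries of $\beta \circ \alpha$ is a cup-then-cap cobordism, i.e.\ a sphere (with a possible dot or two) disjoint from $\id_{S'}$. The diagonal entries produce a once-dotted sphere, which evaluates to $1$ by the dotted sphere relation. The off-diagonal entries produce either an undotted sphere, which is $0$ by the sphere relation, or a twice-dotted sphere, which also vanishes. This shows $\beta \circ \alpha = \id$.

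Third, and this is the only step with any real content, I would verify $\alpha \circ \beta = \id_S$. The composition is a sum of two cobordisms: a cap-then-cup joining the circle $\bigcirc$ in $S$ to itself, with a dot placed either on the bottom cap or the top cup. This is exactly the right-hand side of Bar-Natan's neck-cutting relation applied to the identity cylinder on $\bigcirc$; so modulo the relations, the sum equals the identity cobordism on $S$. The main (and essentially only) obstacle is bookkeeping the $\pm 1$ quantum shifts so that $\alpha$ and $\beta$ are genuinely degree-$0$ morphisms, after which the three local relations do all the work with no further calculation.
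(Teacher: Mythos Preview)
Your proposal is correct and is exactly the standard argument behind Bar-Natan's delooping lemma; the paper itself does not give a proof but simply records the isomorphisms in Figure~\ref{Deloop}, and those are precisely the dotted/undotted cup and cap cobordisms you describe. Your verification via the sphere, dotted-sphere, and neck-cutting relations is the intended one, so there is nothing to add.
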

The isomorphisms are shown in Figure~\ref{Deloop}.

\begin{figure}

  \centering
    \includegraphics[width=0.8\textwidth]{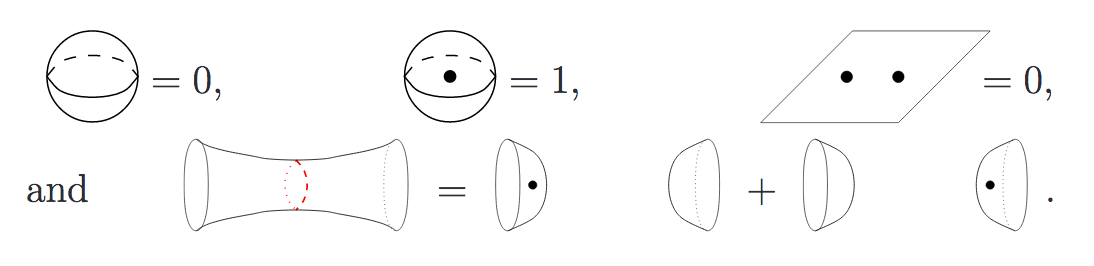}
      \caption{The local relations satisfied in Bar-Natan's cobordism category. Figure from~\cite{bar-natan_fast_2007}}\label{Relations}
\end{figure}
\begin{figure}

  \centering
    \includegraphics[width=0.6\textwidth]{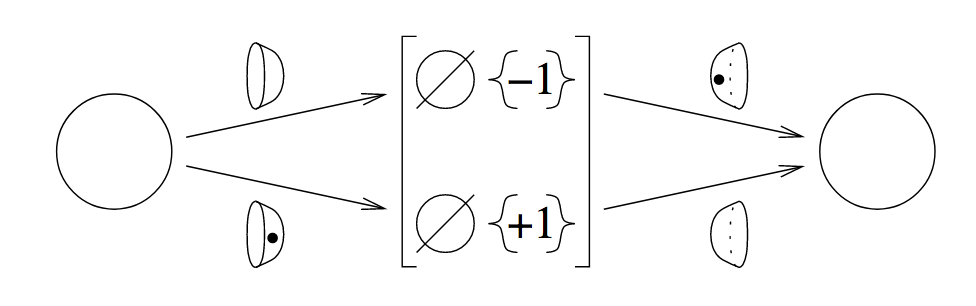}
      \caption{Delooping a circle to two shifted empty sets. Figure from~\cite{bar-natan_fast_2007}}\label{Deloop}
\end{figure}

\begin{lem}[Lemma~4.2 of~\cite{bar-natan_fast_2007} and Lemma~4.1 of \cite{baldwin_spectral_2011}]
If $\phi: B \to B'$ is an isomorphism, then the following chain complexes
 $$ \xymatrix@C=2cm{\cdots \ar[r] & A \ar[r]^{\begin{pmatrix}\alpha \\ \beta\end{pmatrix}} & B \oplus C \ar[r]^{\begin{pmatrix} \phi & \delta \\ \gamma & \epsilon \end{pmatrix}} & B' \oplus D \ar[r]^{\begin{pmatrix} \mu & \nu \end{pmatrix}} & E\ar[r] &\cdots}$$  is homotopy equivalent to the complex $$\xymatrix@C=2cm{\cdots \ar[r] & A \ar[r]^{\left(\beta\right)} &  C \ar[r]^{\left(\epsilon-\gamma\phi^{-1}\delta\right)} &  D \ar[r]^{\left(\nu\right)}& E\ar[r]&\cdots}$$

\end{lem}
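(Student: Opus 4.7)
The plan is to prove the cancellation lemma by Gaussian elimination: introduce change-of-basis automorphisms that diagonalize the middle differential, then observe that the chain-complex conditions force the resulting complex to split as a direct sum of the desired simpler complex and an acyclic two-term complex $B \xrightarrow{\phi} B'$. Concretely, I would define automorphisms $P : B \oplus C \to B \oplus C$ and $Q : B' \oplus D \to B' \oplus D$ by
$$P = \begin{pmatrix} 1 & \phi^{-1}\delta \\ 0 & 1\end{pmatrix}, \qquad Q = \begin{pmatrix} 1 & 0 \\ -\gamma\phi^{-1} & 1\end{pmatrix},$$
which are invertible since $\phi^{-1}$ exists by hypothesis. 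A direct matrix computation gives
$$Q \begin{pmatrix}\phi & \delta \\ \gamma & \epsilon\end{pmatrix} P^{-1} = \begin{pmatrix}\phi & 0 \\ 0 & \epsilon - \gamma\phi^{-1}\delta\end{pmatrix},$$
so after this isomorphism of chain complexes the middle differential becomes block-diagonal; meanwhile the new incoming and outgoing maps become $P\begin{pmatrix}\alpha \\ \beta\end{pmatrix} = \begin{pmatrix}\alpha + \phi^{-1}\delta\beta \\ \beta\end{pmatrix}$ and $\begin{pmatrix}\mu & \nu\end{pmatrix}Q^{-1} = \begin{pmatrix}\mu + \nu\gamma\phi^{-1} & \nu\end{pmatrix}$.

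The next step is to show that the off-diagonal components of these new maps vanish for free, by invoking the original chain-complex conditions. The identity $\begin{pmatrix}\phi & \delta \\ \gamma & \epsilon\end{pmatrix}\begin{pmatrix}\alpha\\\beta\end{pmatrix} = 0$ gives $\phi\alpha + \delta\beta = 0$, i.e.\ $\alpha + \phi^{-1}\delta\beta = 0$; similarly $\begin{pmatrix}\mu & \nu\end{pmatrix}\begin{pmatrix}\phi & \delta \\ \gamma & \epsilon\end{pmatrix} = 0$ yields $\mu + \nu\gamma\phi^{-1} = 0$. Thus in the new basis the complex splits literally as the direct sum
$$\Bigl(A \xrightarrow{\beta} C \xrightarrow{\epsilon - \gamma\phi^{-1}\delta} D \xrightarrow{\nu} E\Bigr) \ \oplus\ \Bigl(0 \to B \xrightarrow{\phi} B' \to 0\Bigr).$$

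Finally, since $\phi$ is an isomorphism the second summand is contractible, so projection onto the first summand is a chain homotopy equivalence (with the evident inclusion as homotopy inverse). This produces exactly the simplified complex stated in the lemma. The whole argument is valid in the Bar-Natan cobordism category because morphism sets between formal direct sums of smoothings are abelian groups under formal addition and composition is bilinear, so these matrix manipulations are legitimate. The main source of friction I expect is purely organizational — one must track matrix and sign conventions carefully — but there is no conceptual obstacle beyond the standard homological-algebra template of Gaussian elimination.
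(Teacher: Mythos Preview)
The paper does not supply its own proof of this lemma; it is quoted verbatim as Lemma~4.2 of Bar-Natan and Lemma~4.1 of Baldwin, and then used as a black box in the computation of Section~\ref{computation}. Your argument is correct and is exactly the standard Gaussian-elimination proof found in those references: conjugate the middle matrix to block-diagonal form, use $d^2=0$ to kill the resulting off-diagonal entries in the adjacent maps, and split off the contractible summand $B\xrightarrow{\phi}B'$.
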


With this background we are now ready to compute $s(\widehat{h^{-1}\sigma_2^4})$. A computer computation shows that the Khovanov homology of $\widehat{h^{-1}\sigma_2^4}$ with coefficients in $\mathbb{Q}$ in homological grading zero has dimension $6$ which is exactly the same as the dimension of the Lee homology of $\widehat{h^{-1}\sigma_2^4}$ in homological grading zero. So if you begin with the original Lee complex and use filtered chain homotopies to simplify it to a complex whose underlying generators are the generators of Khovanov homology, then there are no induced differentials entering or leaving homological grading 0. So in this homological grading, the filtration on Lee homology is determined by entirely by the grading on Khovanov homology.

The Khovanov homology of $\widehat{h^{-1}\sigma_2^4}$ in homological grading zero has dimension one in quantum grading $-3$, dimension three in quantum grading $-1$, and dimension two in quantum grading $1$. The part of the distinguished generator $\mathfrak{s}_o$ in quantum grading $-3$ is a cycle in the Khovanov homology of $\widehat{h^{-1}\sigma_2^4}$ so if it is non-zero in Khovanov homology then $s(\widehat{h^{-1}\sigma_2^4}) = -2$ because of the fact that the ranks of Khovanov homology and Lee homology are the same in homological grading 0.

To compute that the part of the distinguished generator $\mathfrak{s}_o$ in quantum grading $-3$ is non-zero in the Khovanov homology of the closure of $h^{-1}\sigma_2^4 = \sigma_1^{-1} \sigma_2^{-2} \sigma_1^{-1} \sigma_2^2$, first we split the braid into the words $\sigma_1^{-1} \sigma_2^{-2} \sigma_1^{-1}$ and $\sigma_2^2$ and compute a tangle complex for each. While computing and simplifying each tangle complex, one can keep track of which are in the image of the oriented resolution under chain homotopy. In this example, from the choice of where to cut the braid, the image is only the oriented resolution. In all of the tangle complexes, the homological and quantum grading of the objects are shown below the object as a pair $(i,j)$.

For the half of the braid $\sigma_2^2$ we start with the complex $A_1$ in Figure~\ref{A1} corresponding to $\sigma_2$ with a shift applied, which is the global shift of the braid $\sigma_1^{-1} \sigma_2^{-2} \sigma_1^{-1}\sigma_2^2$. To get a complex for $\sigma_2^2$ we take a complex $A_2$ in Figure~\ref{A2} for the second crossing $\sigma_2$ and tensor the two complexes to get the complex $A_3 = A_1 \otimes A_2$ in Figure~\ref{A3}. The object at the far right of $ A_3$ contains a trivial circle so it can be delooped to get the complex $A_4$ in Figure~\ref{A4}. The arrow at the bottom right of $A_4$ is an isomorphism so it can be eliminated to get the complex $B$ in Figure~\ref{B}. The only generator in $B$ that is in the image of the oriented resolution under the chain equivalence is the generator at the far left of the complex.

\begin{figure}

  \centering
    \includegraphics[width=0.6\textwidth]{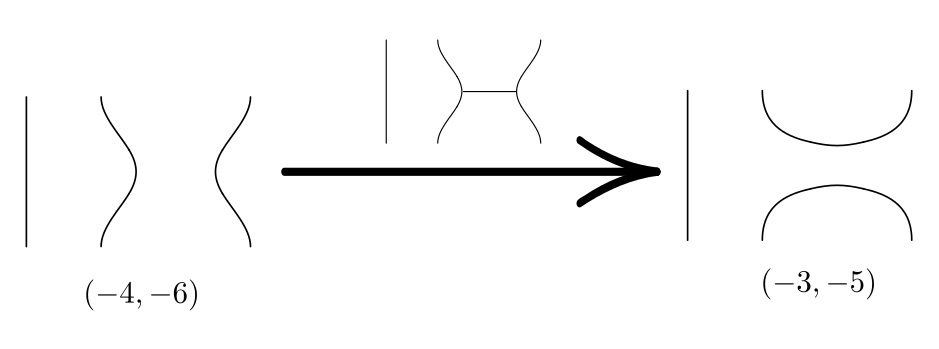}
      \caption{The complex $A_1$}\label{A1}
\end{figure}
\begin{figure}

  \centering
    \includegraphics[width=0.6\textwidth]{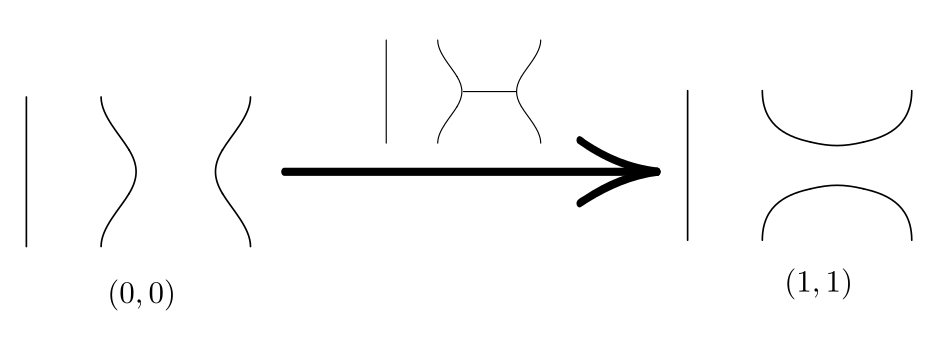}
      \caption{The complex $A_2$}\label{A2}
\end{figure}
\begin{figure}

  \centering
    \includegraphics[width=0.6\textwidth]{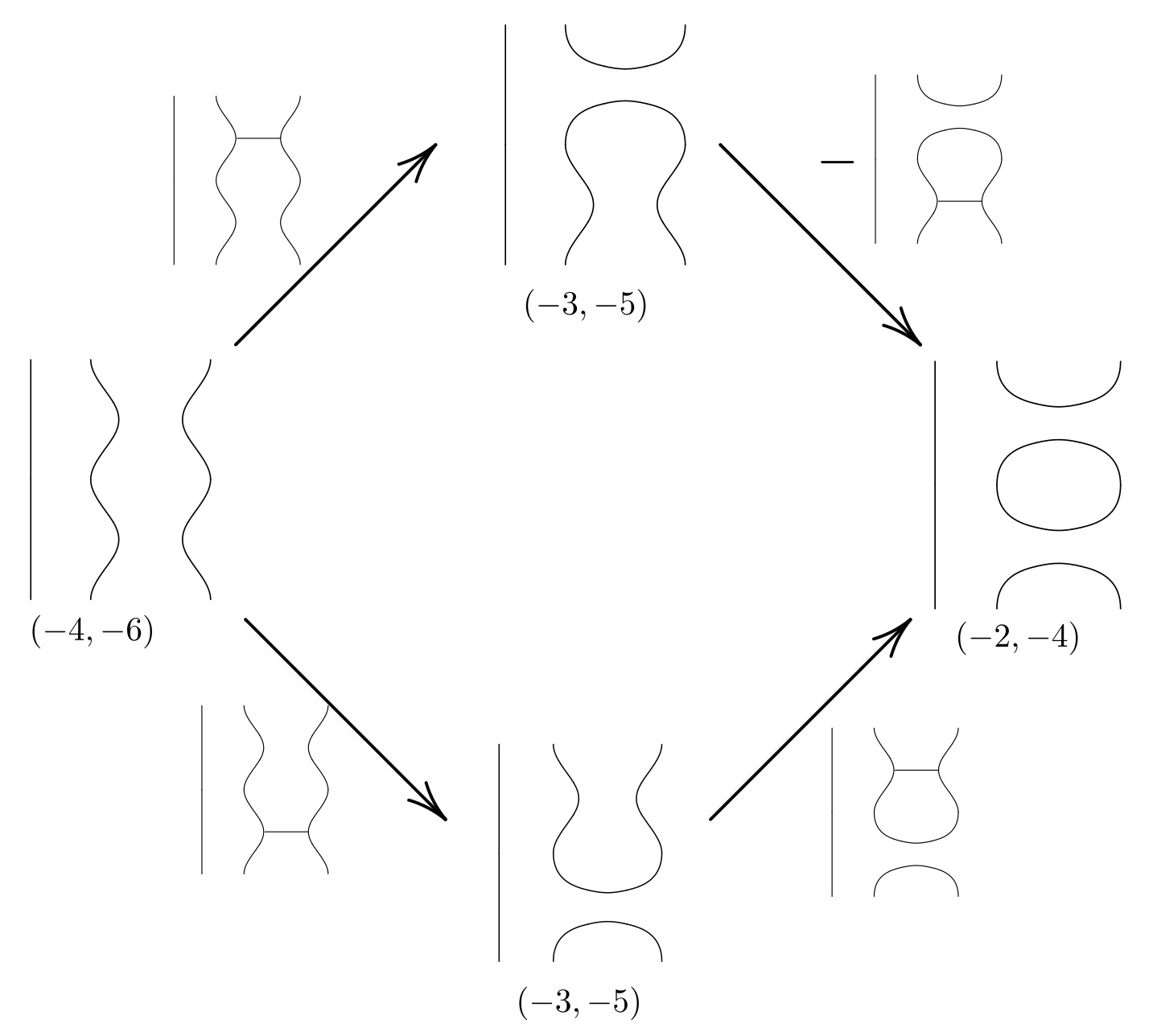}
      \caption{The complex $A_3=A_1\otimes A_2$}\label{A3}
\end{figure}
\begin{figure}

  \centering
    \includegraphics[width=0.9\textwidth]{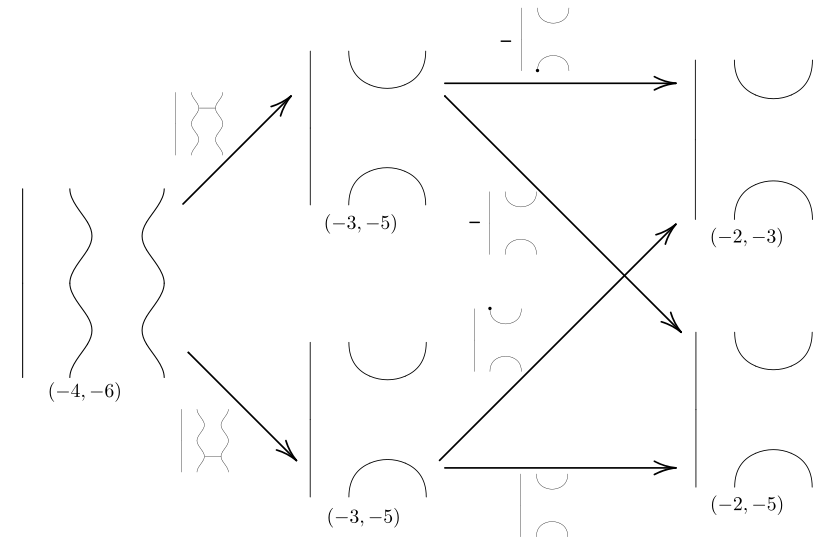}
      \caption{The complex $A_4$ from delooping $A_3$}\label{A4}
\end{figure}
\begin{figure}

  \centering
    \includegraphics[width=0.8\textwidth]{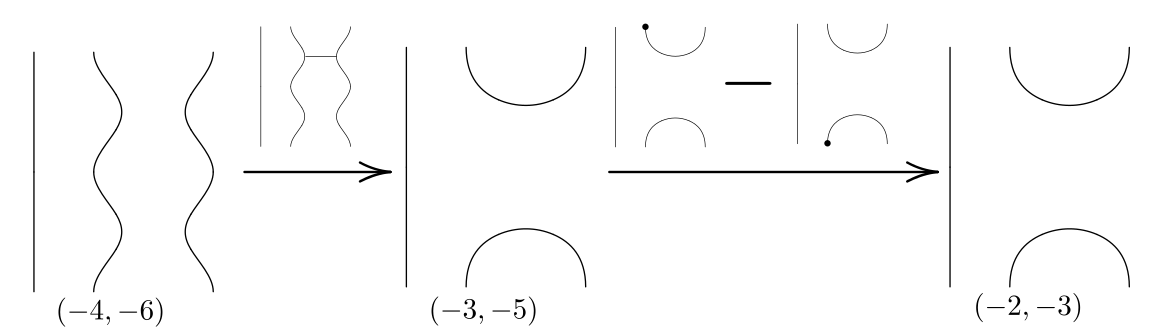}
      \caption{The complex $B$ from an elimination in $A_4$}\label{B}
\end{figure}

Now we compute the complex associated to the other half of the braid $\sigma_1^{-1} \sigma_2^{-2} \sigma_1^{-1}$. We start with the complex $C_1$ in Figure~\ref{C1} corresponding to $\sigma_2^{-1}$. Then we have the complex $C_2 = C_1 \otimes C_1$ in Figure~\ref{C2} which corresponds to $\sigma_2^{-2}$. The object at the far left of $ C_2$ contains a trivial circle so it can be delooped to get the complex $C_3$ in Figure~\ref{C3}. The arrow at the top left of $C_3$ is an isomorphism so it can be eliminated to get the complex $C_4$ in Figure~\ref{C4}. 

To get a complex representing $\sigma_2^{-2} \sigma_1^{-1}$ we take the complex $C_4$ and tensor it with the complex $C_5$ in Figure~{C5} to get the complex $C_6= C_4\otimes C_5$ in Figure~\ref{C6}. Then to represent the word $\sigma_1^{-1} \sigma_2^{-2} \sigma_1^{-1}$ we take $C_5 \otimes C_6$, which is shown as complex $D$ in Figures~\ref{ComplexGensD}~and~\ref{DiffsD}. The object $D_8$ in the complex $D$ has a closed circle so it is possible to deloop it and obtain the complex $E$ in Figures~\ref{GensE}~and~\ref{DiffsE}.

The map $E_4 \to E_8$ in the complex $E$ is an isomorphism so it can be eliminated. No other differentials change during this elimination because no other objects mapped into $E_8$. The map $E_8' \to E_{11}$ in the complex is an isomorphism as well so it can also be eliminated. The differential of $E_7$ changes with this elimination. After both eliminations we get a new complex $E'$ whose objects are $\{E_1 , E_2 , E_3, E_5 , E_6 , E_7, E_9 , E_ {10}, E_{12}\}$ and whose differentials are shown in Figure~\ref{DiffsEprime}.

Now we are ready to combine the two halves of the braid $\sigma_{1}^{1} \sigma_2^{-2} \sigma_1^{-1}$ and $\sigma_2^2$ by tensoring the complexes $E'$ and $B$. Because we are considering if a specific cycle in homological grading $0$ is non-vanishing in homology, we only need to consider objects in homological gradings $-1$ and $0$ of the complex $E' \otimes B$. The generators of this complex in these homological degrees are shown in Figure~\ref{FinalComplexGens} and the differentials from grading $-1$ to grading $0$ are shown in Figure~\ref{FinalComplexDiffs}. For the sake of clarity we number the objects of the complex $B$ from $1$ to $3$ by increasing homological grading and label the objects in $E' \otimes B$ as the tensor of an object from each complex.

We can close the generators of $E' \otimes B$ into planar unlinks and consider specifically the generators in quantum grading $-3$. These generators are enumerated in Figure~\ref{ClosedResolutions} where a list of $+$ and $-$ signs means labeling the circles with these signs in the order shown by the numbering of the circles. The cycle which is the part of $\mathfrak{s}_o$ in this quantum grading is $-v_{10} + v_{11} - v_{12}$.

For these generators in Figure~\ref{ClosedResolutions}, the differential from homological grading $-1$ to homological grading $0$ is shown in Table~\ref{DiffTable}. A computer computation then shows that the cycle $ -v_{10} + v_{11} - v_{12}$ is not in the image of the differential and so it is non-zero in the Khovanov homology of the link and so then $s(\widehat{h^{-1}\sigma_2^4}) = -2$.

\begin{figure}

  \centering
    \includegraphics[width=0.6\textwidth]{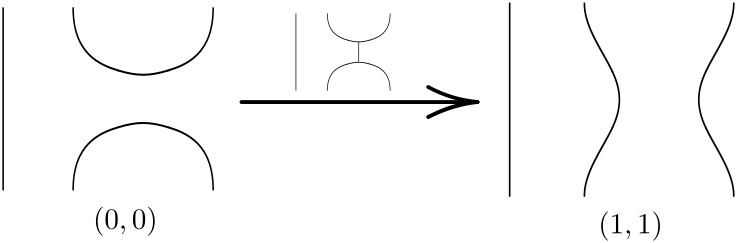}
      \caption{The complex $C_1$ }\label{C1}
\end{figure} 

\begin{figure}

  \centering
    \includegraphics[width=0.65\textwidth]{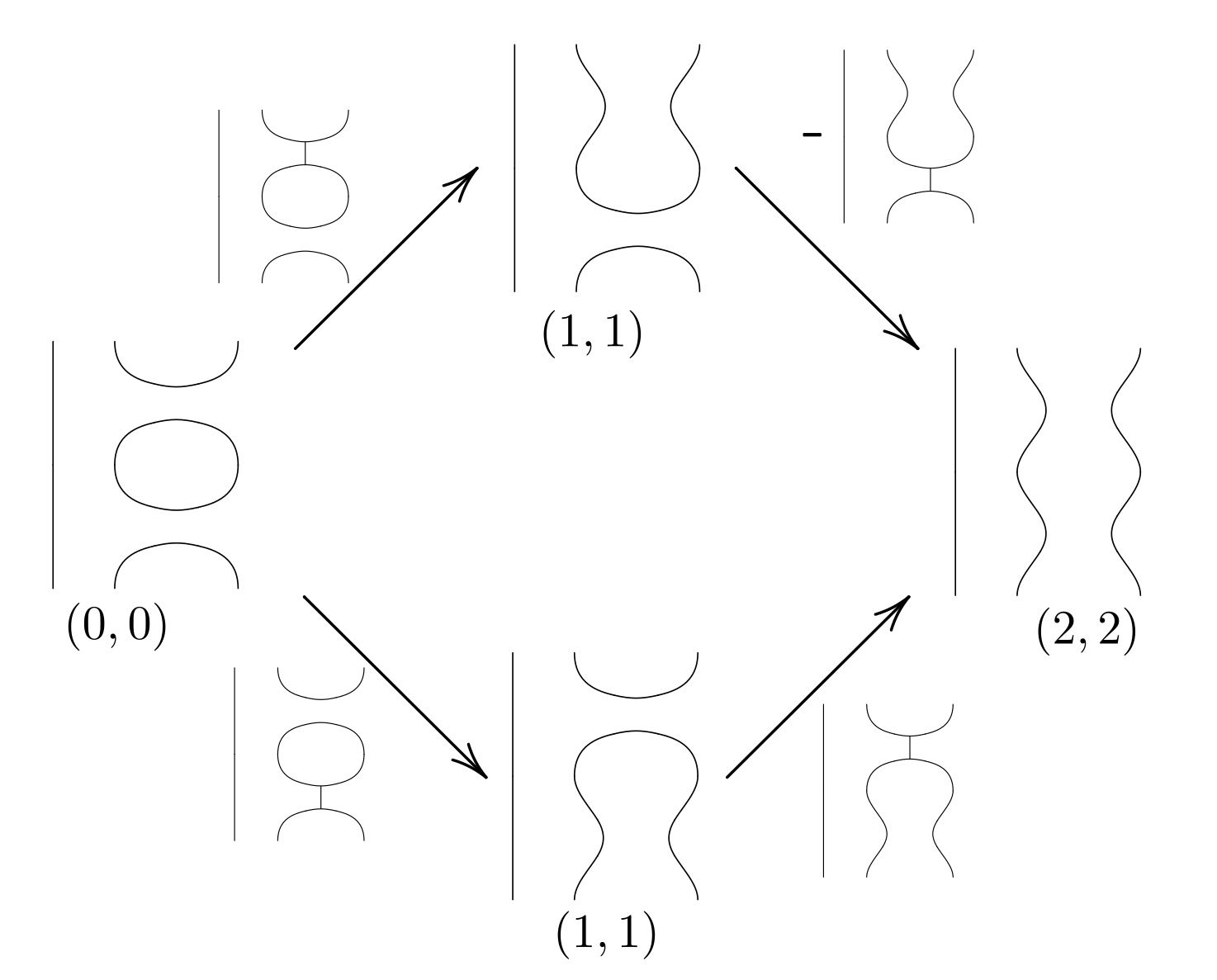}
      \caption{The complex $C_2 = C_1 \otimes C_1$ }\label{C2}
\end{figure} 

\begin{figure}

  \centering
    \includegraphics[width=0.55\textwidth]{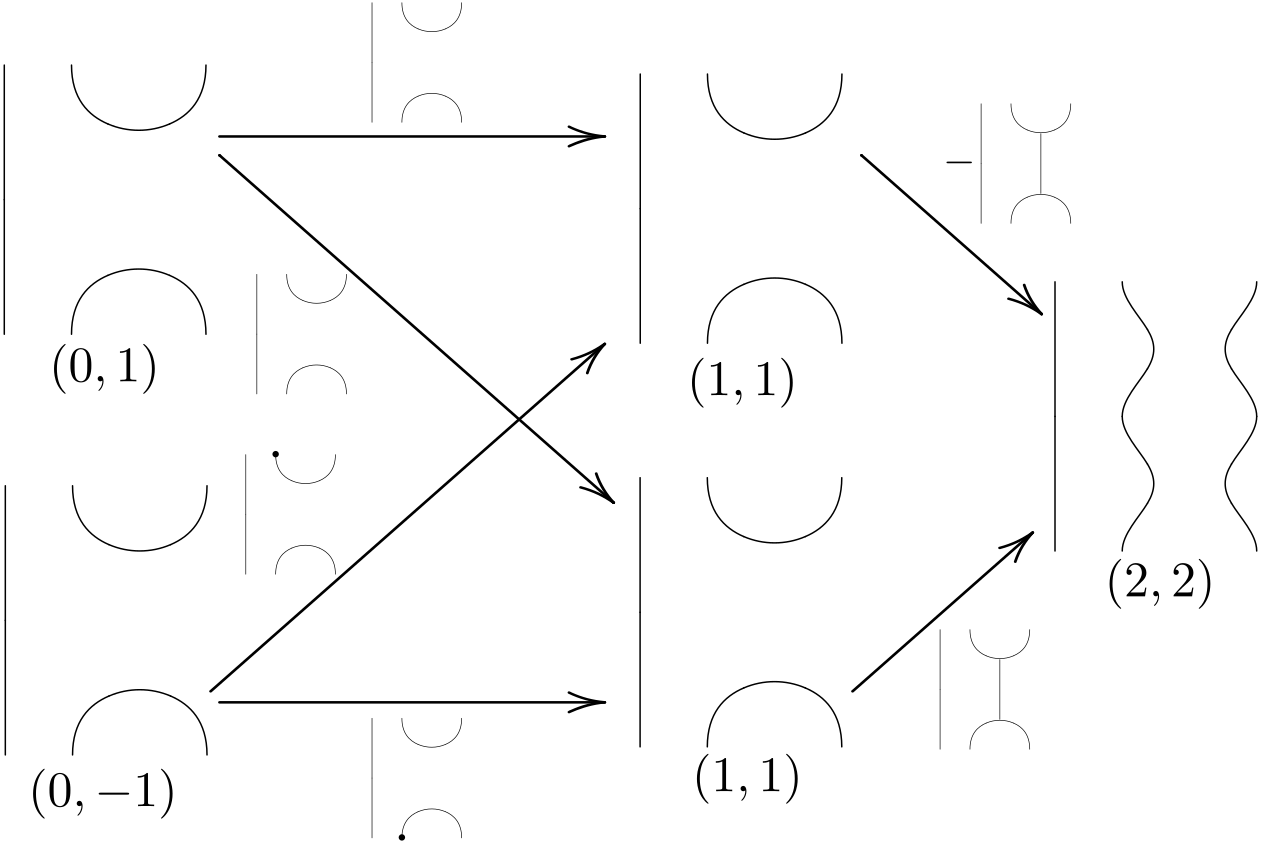}
      \caption{The complex $C_3$ from delooping $C_2$ }\label{C3}
\end{figure} 

\begin{figure}

  \centering
    \includegraphics[width=0.65\textwidth]{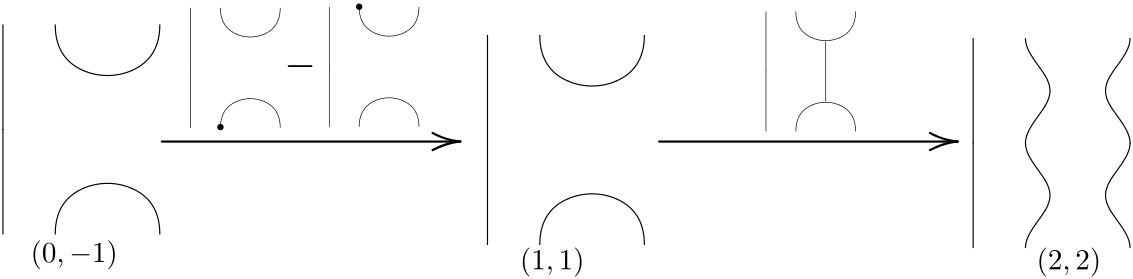}
      \caption{The complex $C_4$ from an elimination in $C_3$ }\label{C4}
\end{figure} 

\begin{figure}

  \centering
    \includegraphics[width=0.45\textwidth]{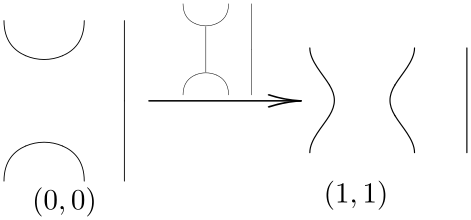}
      \caption{The complex $C_5$ representing $\sigma_1^{-1}$ }\label{C5}
\end{figure} 

\begin{figure}

  \centering
    \includegraphics[width=.9\textwidth]{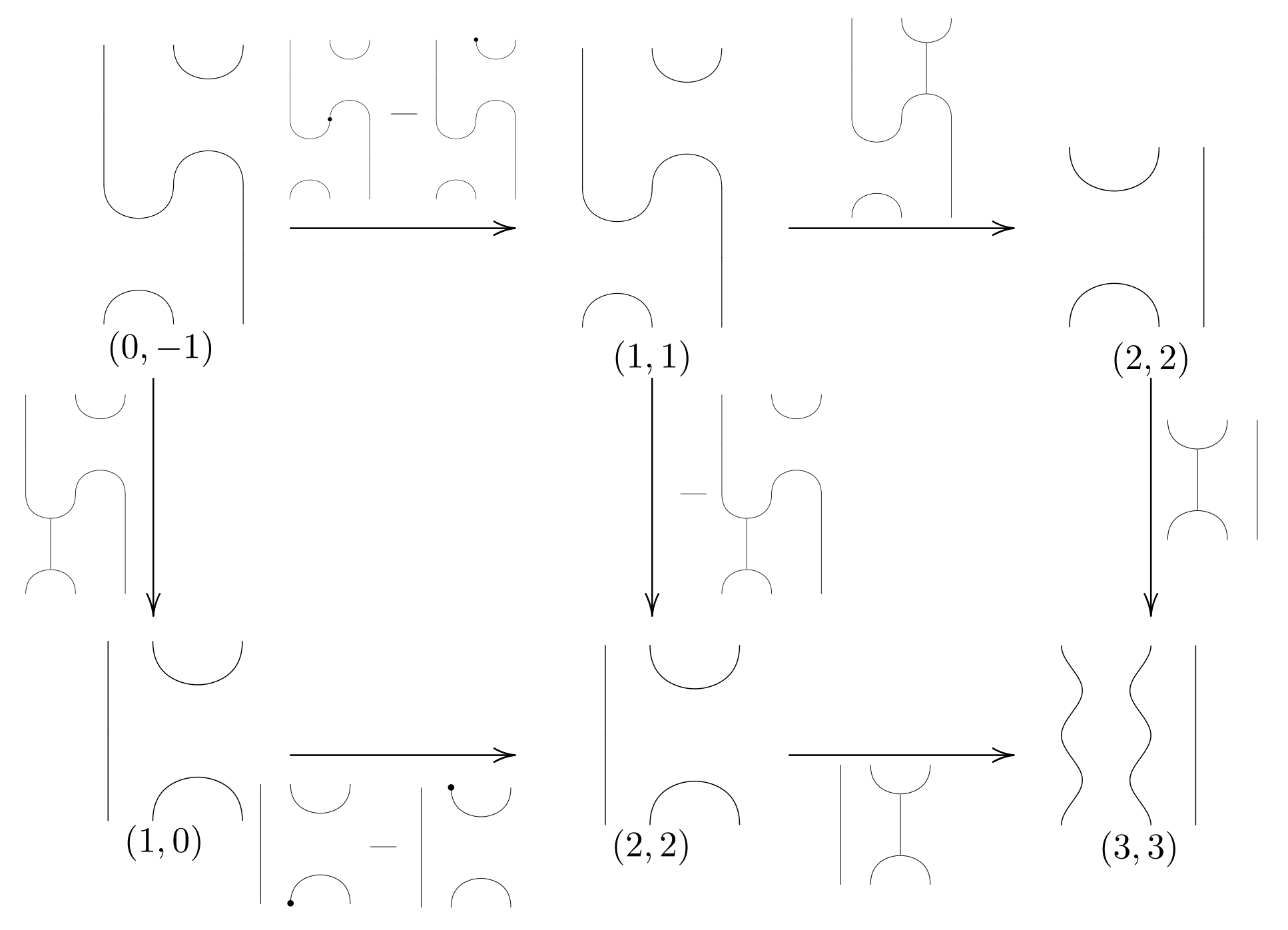}
      \caption{The complex $C_6 = C_4 \otimes C_5$ }\label{C6}
\end{figure} 

\begin{figure}

  \centering
    \includegraphics[width=0.6\textwidth]{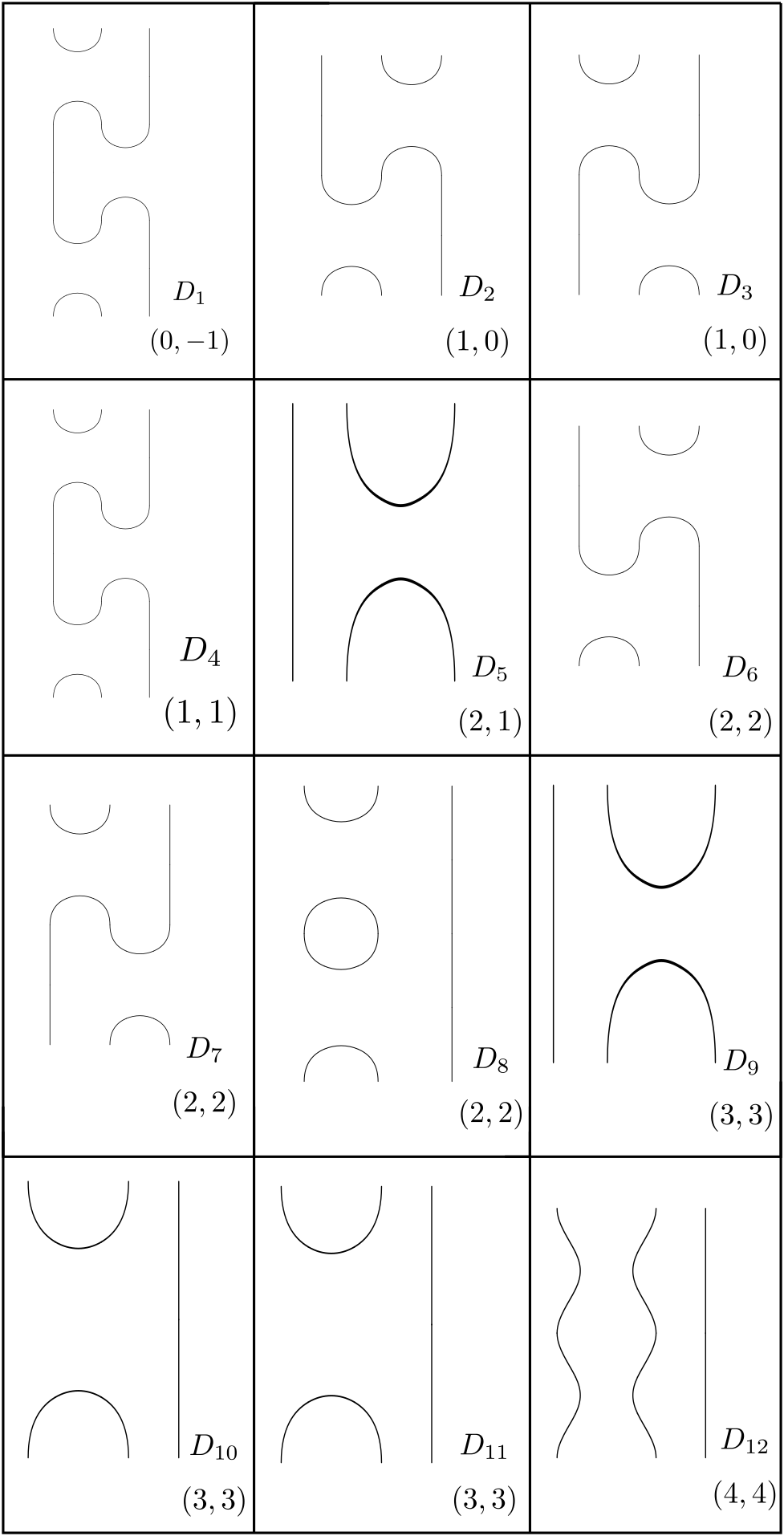}
      \caption{The generators of the complex $D= C_5 \otimes C_6$ }\label{ComplexGensD}
\end{figure} 

\begin{figure}

  \centering
    \includegraphics[width=\textwidth]{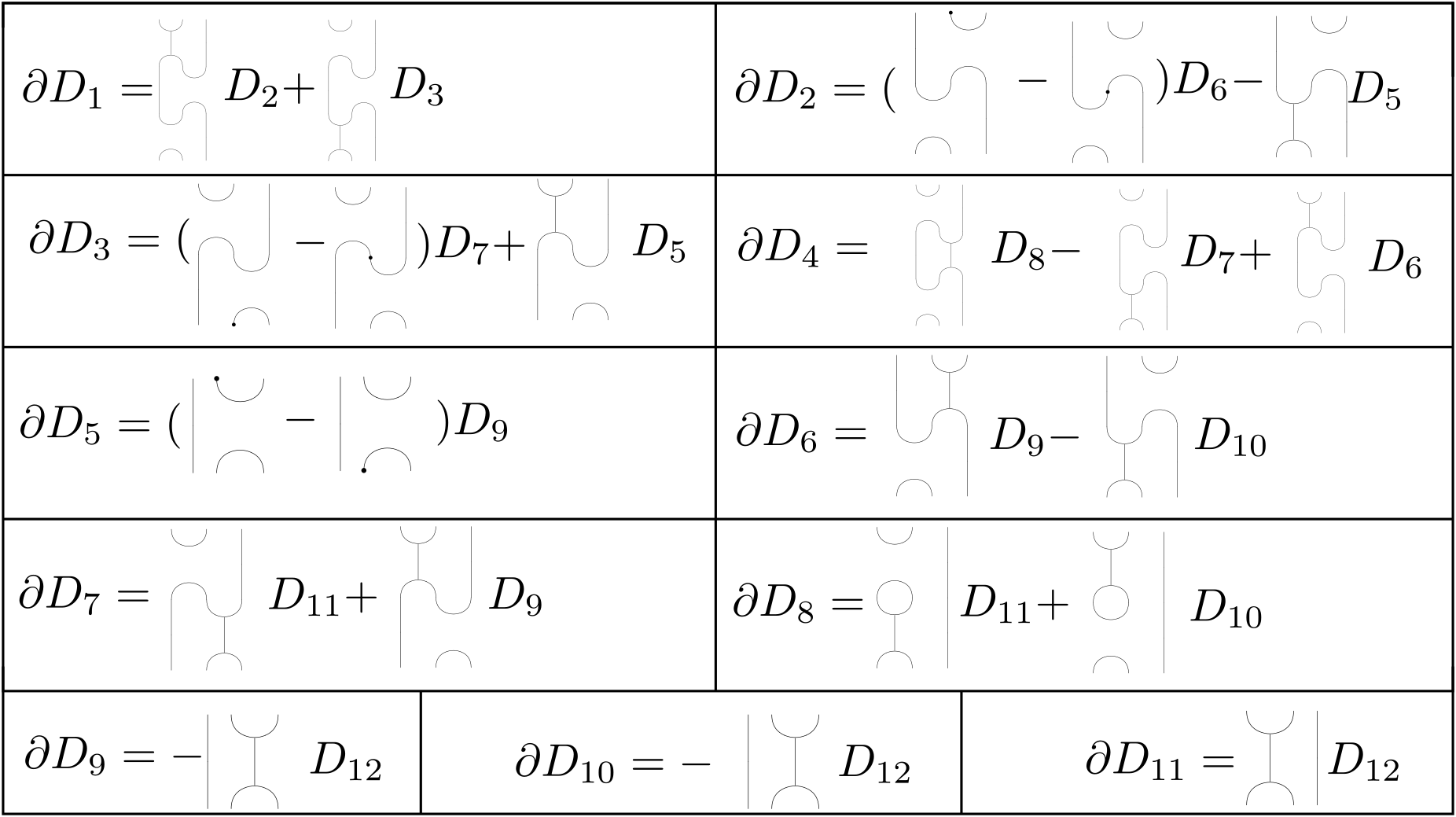}
      \caption{The differentials of the complex $D= C_5 \otimes C_6$ }\label{DiffsD}
\end{figure} 

\begin{figure}

  \centering
    \includegraphics[width=0.6\textwidth]{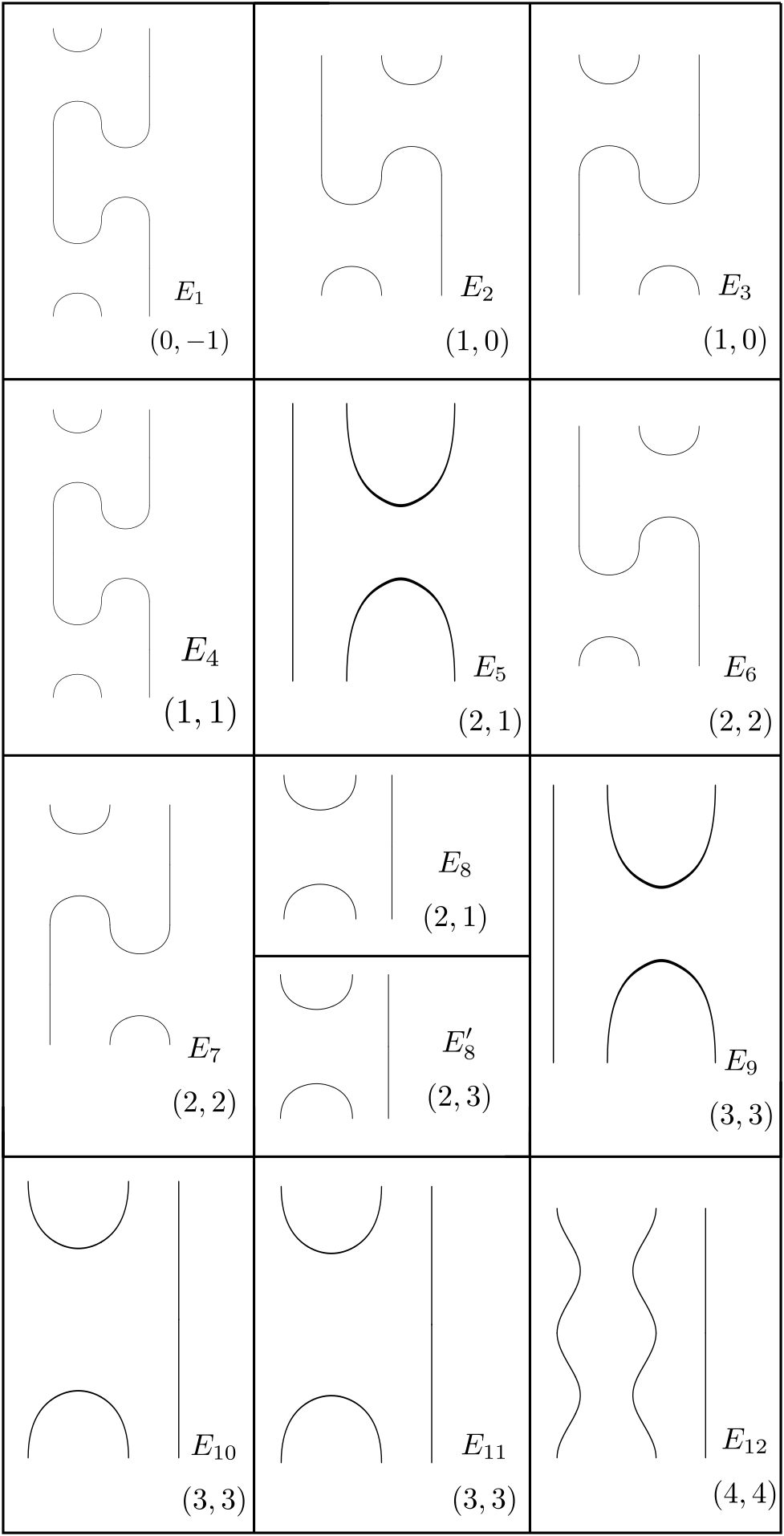}
      \caption{The generators of the complex $E$ obtained by delooping the complex $D$ }\label{GensE}
\end{figure} 

\begin{figure}

  \centering
    \includegraphics[width=\textwidth]{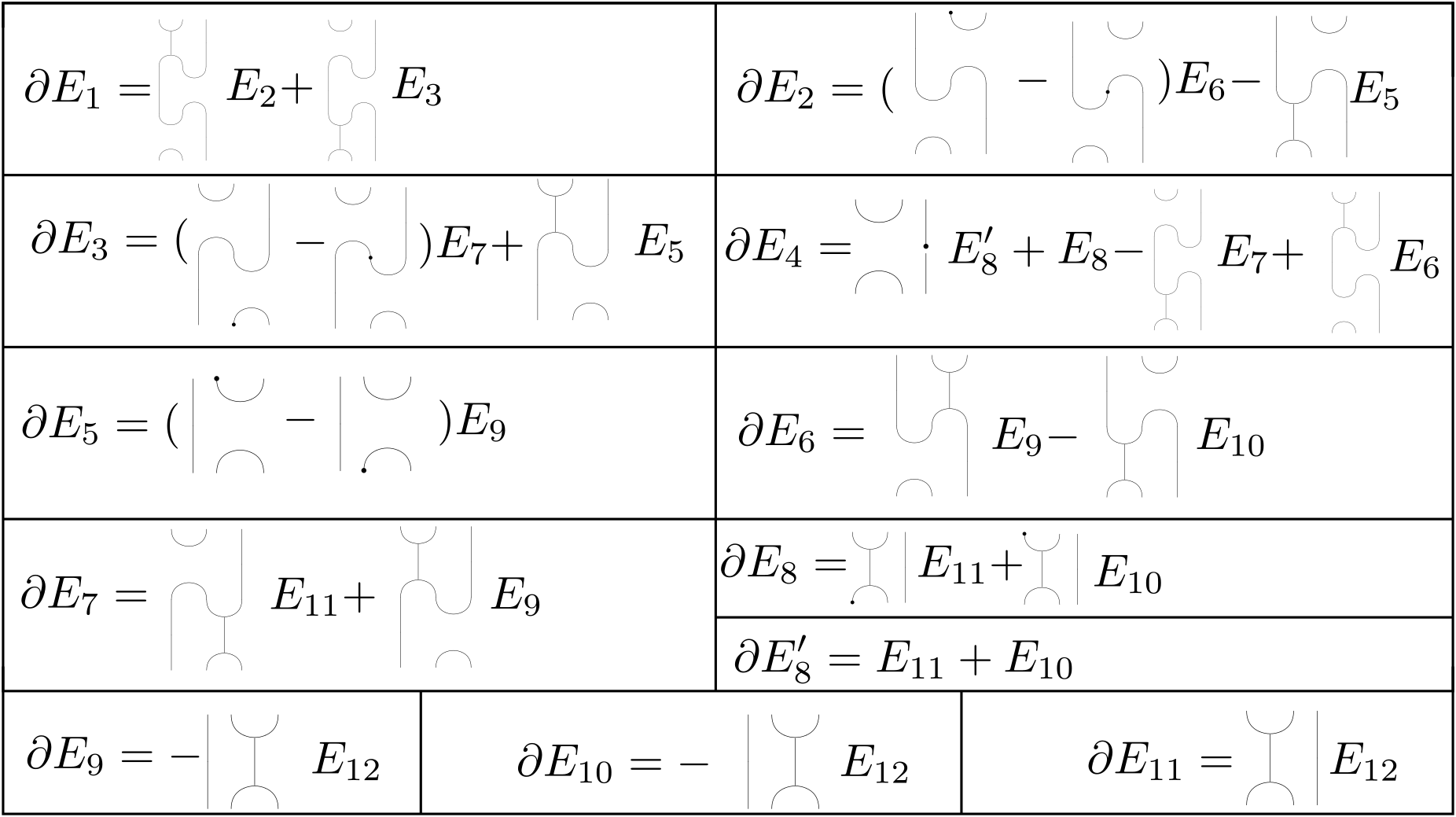}
      \caption{The differentials of the complex $E$ }\label{DiffsE}
\end{figure} 

\begin{figure}

  \centering
    \includegraphics[width=\textwidth]{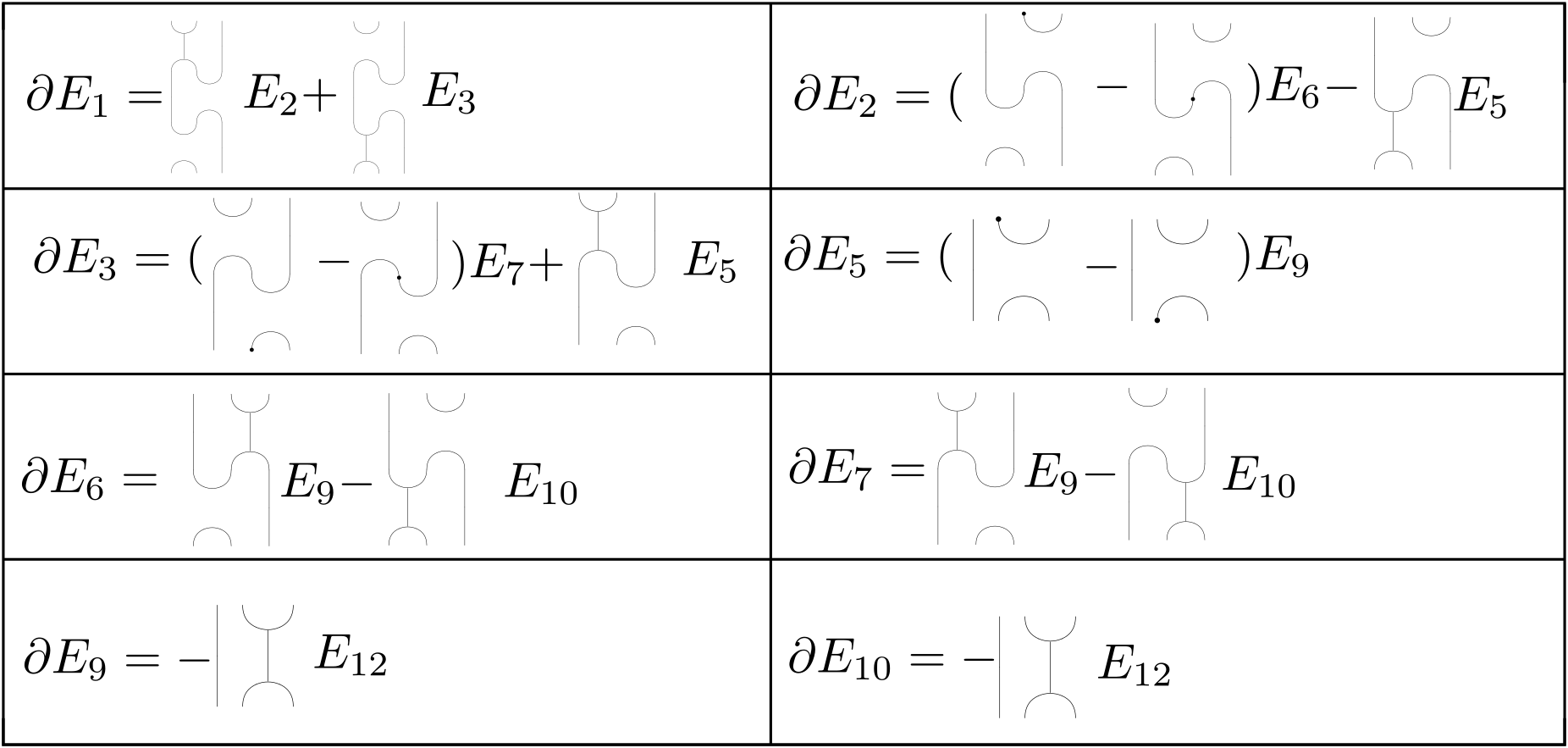}
      \caption{The differentials of the complex $E'$ obtained by eliminations of $E$ }\label{DiffsEprime}
\end{figure} 

\begin{figure}

  \centering
    \includegraphics[width=\textwidth]{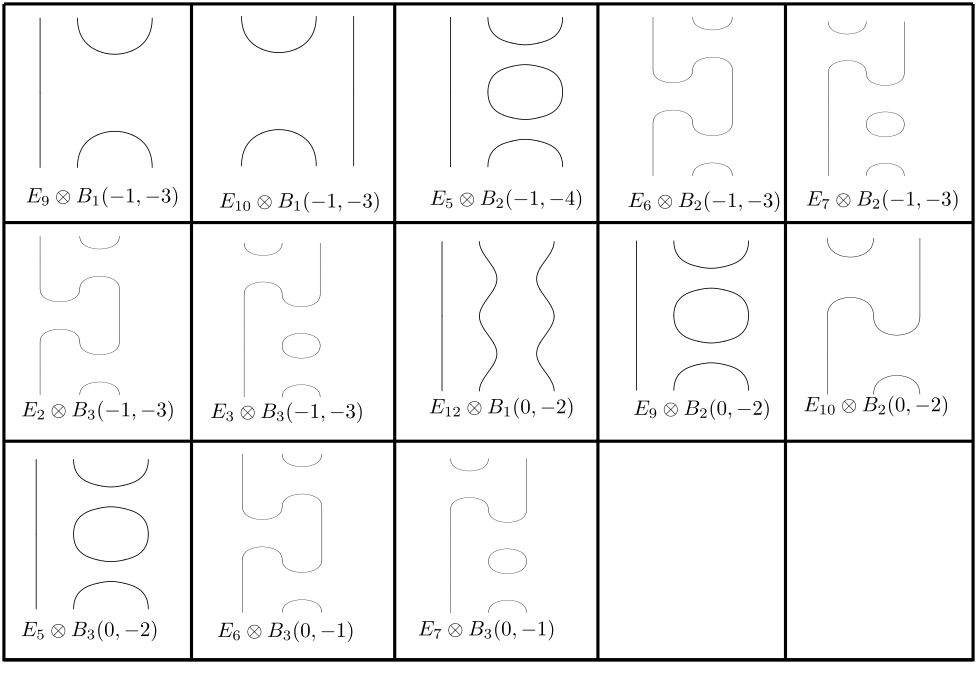}
      \caption{The generators of $E' \otimes B$ in homological gradings $-1$ and $0$ }\label{FinalComplexGens}
\end{figure} 

\begin{figure}

  \centering
    \includegraphics[width=\textwidth]{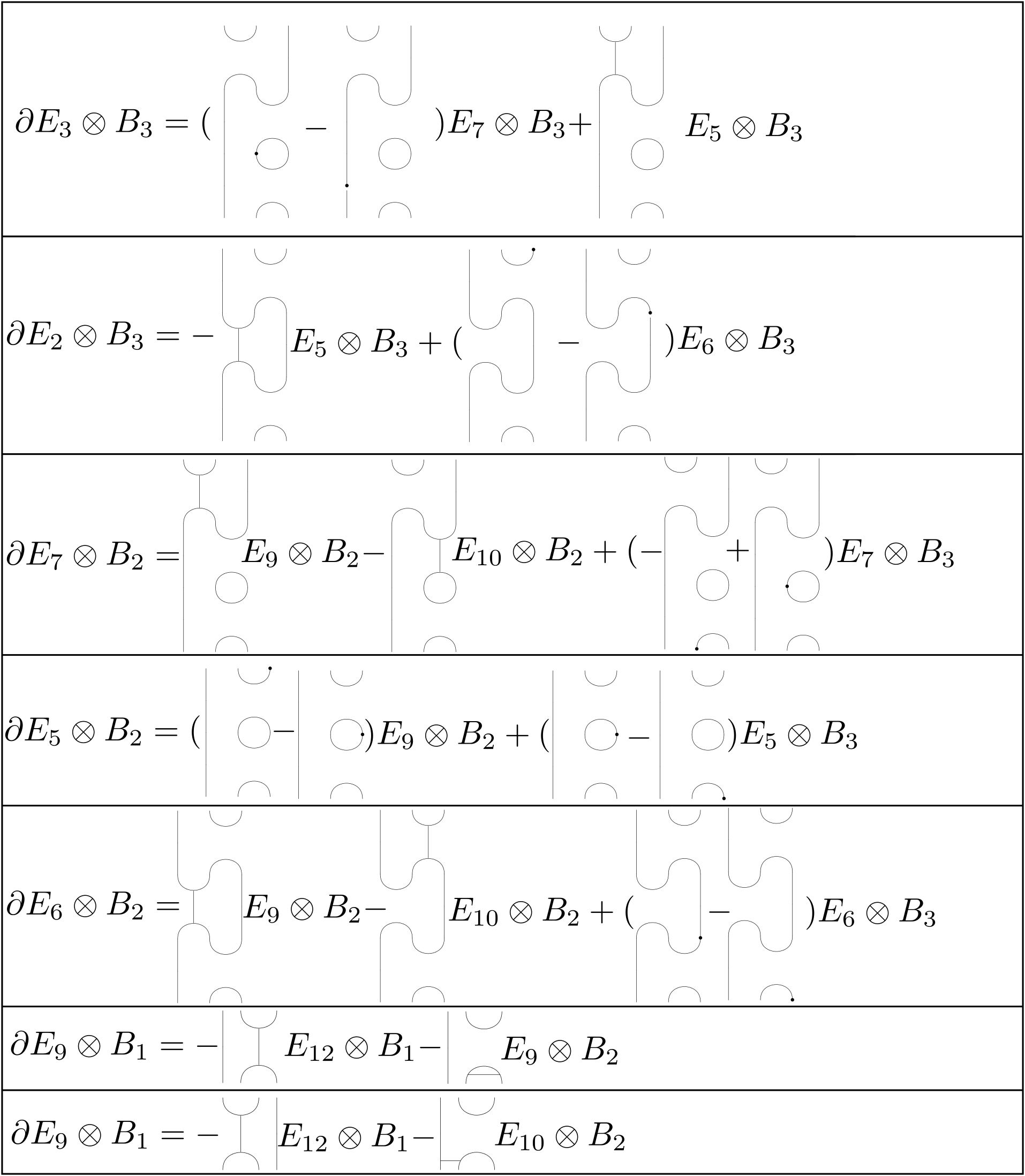}
      \caption{The differentials of $E' \otimes B$ out of homological gradings $-1$ }\label{FinalComplexDiffs}
\end{figure} 

  \begin{figure}

  \centering
    \begin{overpic}[width=\textwidth]{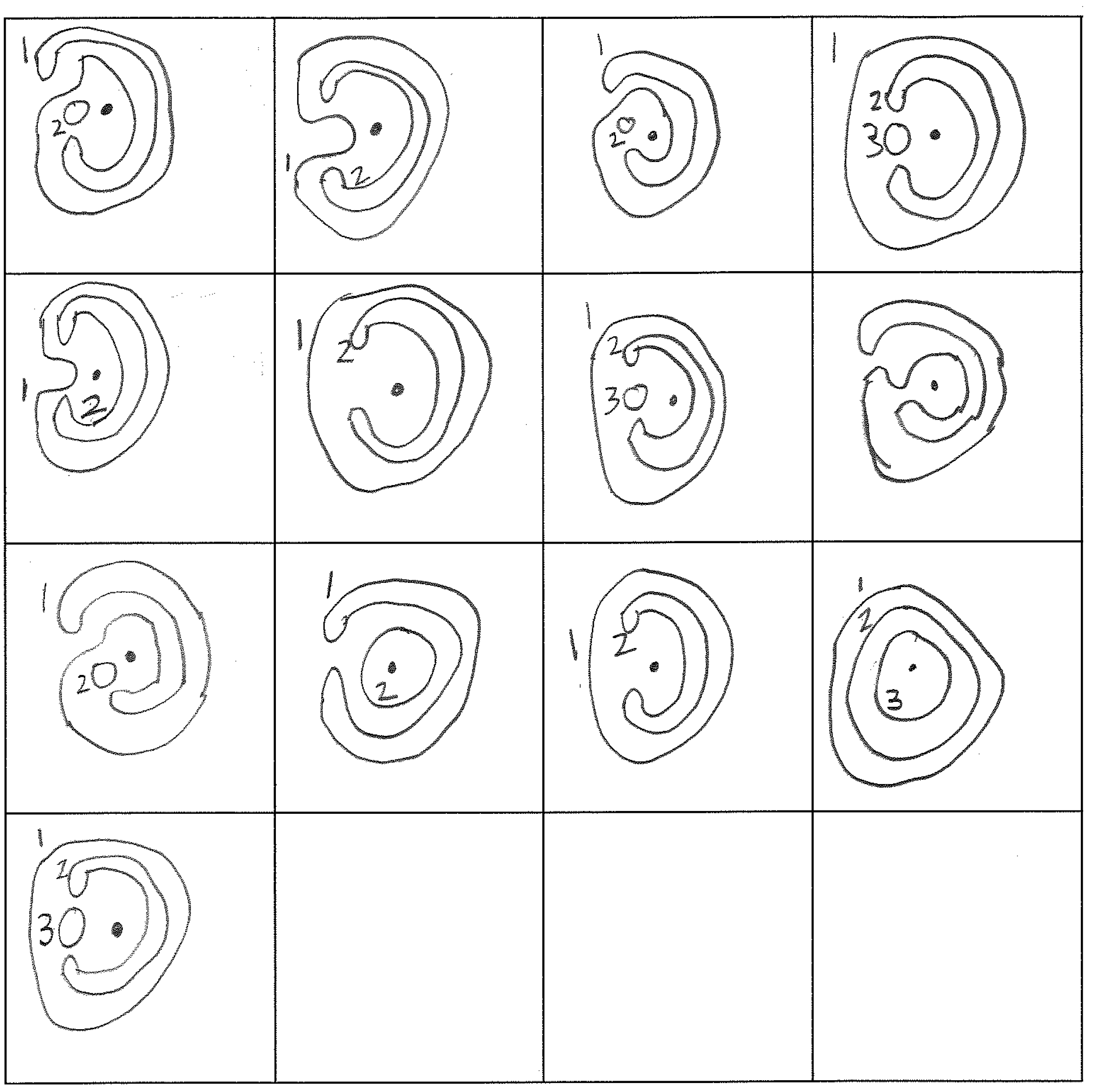}
    \put(5,77){$E_3 \otimes B_3$}
    \put(16,88){$+-w_1$}
     \put(16,84){$-+w_2$}
     
      \put(5,53){$E_2 \otimes B_3$}
      \put(16,64){$+-w_3$}
     \put(16,60){$-+w_4$}

  \put(5,27){{$E_7 \otimes B_2$}}
  \put(17,48){$+-w_5$}
     \put(17,45){$-+w_6$}
  
  \put(2,3){$E_5 \otimes B_2$}
  \put(15,10){$++-w_7$}
     \put(15,6){$+-+w_8$}
      \put(15,2){$--+w_9$}

     \put(28,76){$E_6\otimes B_2$}
    \put(40,80){$+-w_{10}$}
     \put(40,76){$-+w_{11}$}
     
 \put(28,52){$E_9\otimes B_1$}
     \put(40,56){$+-w_{12}$}
     \put(40,52){$-+w_{13}$}
     
  \put(28,27){$E_{10}\otimes B_1$}
      \put(40,31){$+-w_{14}$}
     \put(40,27){$-+w_{15}$}
     
    \put(55,77){$E_{7}\otimes B_3$}
     \put(66,85){$--v_{1}$}
     
      \put(52,51){$E_{5}\otimes B_3$}
           \put(64,70){$--+v_{2}$}
           \put(64,55){$-+-v_{3}$}
           \put(64,53){$+--v_{4}$}
           
         \put(55,27){$E_{6}\otimes B_3$}
              \put(66,29){$--v_{5}$}
              
         \put(89,96){$E_{9}\otimes B_2$}     
           \put(74.5,75.5){$--+v_{6}$}
           \put(89,78){$-+-v_{7}$}
           \put(89,76){$+--v_{8}$}
           
         \put(77,53){$E_{10}\otimes B_2$} 
          \put(93,57){$-v_{9}$}
          
          \put(88,27){$E_{12}\otimes B_1$}             
           \put(88,48){$--+v_{10}$}
           \put(88,45){$-+-v_{11}$}
           \put(88,30){$+--v_{12}$}
          \end{overpic}
      \caption{The generators in quantum grading $-3$ after closing off the tangles}\label{ClosedResolutions}
      \end{figure}
      
      \begin{table}[]
  \begin{center}
    \begin{tabular}{|c|c|}
      \hline
      $\partial(w_1) = -v_1 + v_3 + v_4$ & $\partial(w_9) = - v_7 + v_6 - v_2 + v_3$ \\
      \hline
      $\partial(w_2) = v_1 + v_2$ & $\partial(w_{10}) = v_6 + v_8 - v_9 + v_5$ \\
      \hline
      $\partial(w_3) = -v_2 - v_4 - v_5$ & $\partial(w_{11}) = v_7 - v_9 - v_5$ \\
      \hline
      $\partial(w_4) = -v_3 + v_5$&$\partial(w_{12}) = - v_{12} - v_8$ \\
      \hline
      $\partial(w_5) = v_7 + v_8 - v_9 - v_1 $ & $\partial(w_{13}) = - v_{10} - v_{11} - v_6 - v_7$ \\
      \hline
        $\partial(w_6) =v_6 - v_9 + v_1$ & $\partial(w_{14}) = - v_{10} - v_{11} - v_6 - v_7$ \\
      \hline
       $\partial(w_7) = v_8 - v_4 $ & $\partial(w_{15} )= - v_{10} - v_9$ \\
      \hline
             $\partial(w_8) = -v_8 + v_4 $ &  \\
      \hline
      
    \end{tabular}
        \caption{The differentials from homological grading $-1$ to homological grading $0$ for the generators in Figure~\ref{ClosedResolutions}}
    \label{DiffTable}
  \end{center}
\end{table}

\clearpage
\bibliography{3BraidPaper}

\begin{thebibliography}{10}

\bibitem{asaeda_categorification_2004}
Marta~M. Asaeda, Józef~H. Przytycki, and Adam~S. Sikora.
\newblock Categorification of the {Kauffman} bracket skein module of
  \${I}\$-bundles over surfaces.
\newblock {\em Algebraic \& Geometric Topology}, 4:1177--1210, 2004.

\bibitem{baldwin_heegaard_2008}
John~A. Baldwin.
\newblock Heegaard {Floer} homology and genus one, one-boundary component open
  books.
\newblock {\em Journal of Topology}, 1(4):963--992, 2008.

\bibitem{baldwin_spectral_2011}
John~A. Baldwin.
\newblock On the spectral sequence from {Khovanov} homology to {Heegaard}
  {Floer} homology.
\newblock {\em International Mathematics Research Notices. IMRN},
  (15):3426--3470, 2011.

\bibitem{baldwin_categorified_2015}
John~A. Baldwin and J.~Elisenda Grigsby.
\newblock Categorified invariants and the braid group.
\newblock {\em Proceedings of the American Mathematical Society},
  143(7):2801--2814, 2015.

\bibitem{baldwin_khovanov_2010}
John~A. Baldwin and Olga Plamenevskaya.
\newblock Khovanov homology, open books, and tight contact structures.
\newblock {\em Advances in Mathematics}, 224(6):2544--2582, 2010.

\bibitem{bar-natan_fast_2007}
Dror Bar-Natan.
\newblock Fast {Khovanov} homology computations.
\newblock {\em Journal of Knot Theory and its Ramifications}, 16(3):243--255,
  2007.

\bibitem{beliakova_categorification_2008}
Anna Beliakova and Stephan Wehrli.
\newblock Categorification of the colored {Jones} polynomial and {Rasmussen}
  invariant of links.
\newblock {\em Canadian Journal of Mathematics. Journal Canadien de
  Mathématiques}, 60(6):1240--1266, 2008.

\bibitem{birman_braids:_2005}
Joan~S. Birman and Tara~E. Brendle.
\newblock Braids: a survey.
\newblock In {\em Handbook of knot theory}, pages 19--103. Elsevier B. V.,
  Amsterdam, 2005.

\bibitem{birman_studying_1993}
Joan~S. Birman and William~W. Menasco.
\newblock Studying links via closed braids. {III}. {Classifying} links which
  are closed \$3\$-braids.
\newblock {\em Pacific Journal of Mathematics}, 161(1):25--113, 1993.

\bibitem{birman_stabilization_2006}
Joan~S. Birman and William~W. Menasco.
\newblock Stabilization in the braid groups. {II}. {Transversal} simplicity of
  knots.
\newblock {\em Geometry and Topology}, 10:1425--1452, 2006.

\bibitem{birman_note_2008}
Joan~S. Birman and William~W. Menasco.
\newblock A note on closed 3-braids.
\newblock {\em Communications in Contemporary Mathematics}, 10(suppl.
  1):1033--1047, 2008.

\bibitem{grigsby_annular_2017}
J.~Elisenda Grigsby, Anthony~M. Licata, and Stephan~M. Wehrli.
\newblock Annular {Khovanov}-{Lee} homology, braids, and cobordisms.
\newblock {\em Pure and Applied Mathematics Quarterly}, 13(3):389--436, 2017.

\bibitem{hubbard_sutured_2017}
Diana Hubbard.
\newblock On sutured {Khovanov} homology and axis-preserving mutations.
\newblock {\em Journal of Knot Theory and its Ramifications}, 26(4):1750017,
  15, 2017.

\bibitem{hubbard_note_2018}
Diana Hubbard and Christine Ruey~Shan Lee.
\newblock A note on the transverse invariant from {Khovanov} homology.
\newblock {\em arXiv:1807.04864 [math]}, July 2018.
\newblock arXiv: 1807.04864.

\bibitem{khovanov_categorification_2000}
Mikhail Khovanov.
\newblock A categorification of the {Jones} polynomial.
\newblock {\em Duke Mathematical Journal}, 101(3):359--426, 2000.

\bibitem{lee_endomorphism_2005}
Eun~Soo Lee.
\newblock An endomorphism of the {Khovanov} invariant.
\newblock {\em Advances in Mathematics}, 197(2):554--586, 2005.

\bibitem{lisca_3-braid_2017}
Paolo Lisca.
\newblock On 3-braid knots of finite concordance order.
\newblock {\em Transactions of the American Mathematical Society},
  369(7):5087--5112, 2017.

\bibitem{livingston_notes_2017}
Charles Livingston.
\newblock Notes on the knot concordance invariant upsilon.
\newblock {\em Algebraic \& Geometric Topology}, 17(1):111--130, 2017.

\bibitem{lobb_computable_2011}
Andrew Lobb.
\newblock Computable bounds for {Rasmussen}'s concordance invariant.
\newblock {\em Compositio Mathematica}, 147(2):661--668, 2011.

\bibitem{john_w_milnor_singular_1968}
John Milnor.
\newblock {\em Singular points of complex hypersurfaces}.
\newblock Annals of mathematics studies ; {Number} 61. University of Tokyo
  Press, Princeton University Press, 1968.

\bibitem{murasugi_closed_1974}
Kunio Murasugi.
\newblock {\em On closed 3-braids.}
\newblock Memoirs of the {American} {Mathematical} {Society} ; no. 151.
  American Mathematical Society, Providence, R.I., 1974.

\bibitem{ozsvath_concordance_2017}
Peter~S. Ozsváth, András~I. Stipsicz, and Zoltán Szabó.
\newblock Concordance homomorphisms from knot {Floer} homology.
\newblock {\em Advances in Mathematics}, 315:366--426, 2017.

\bibitem{pardon_link_2012}
John Pardon.
\newblock The link concordance invariant from {Lee} homology.
\newblock {\em Algebraic \& Geometric Topology}, 12(2):1081--1098, 2012.

\bibitem{plamenevskaya_transverse_2006}
Olga Plamenevskaya.
\newblock Transverse knots and {Khovanov} homology.
\newblock {\em Mathematical Research Letters}, 13(4):571--586, 2006.

\bibitem{plamenevskaya_braid_2018}
Olga Plamenevskaya.
\newblock Braid monodromy, orderings and transverse invariants.
\newblock {\em Algebraic \& Geometric Topology}, 18(6):3691--3718, October
  2018.

\bibitem{rasmussen_khovanov_2010}
Jacob Rasmussen.
\newblock Khovanov homology and the slice genus.
\newblock {\em Inventiones Mathematicae}, 182(2):419--447, 2010.

\bibitem{roberts_knot_2013}
Lawrence~P. Roberts.
\newblock On knot {Floer} homology in double branched covers.
\newblock {\em Geometry \& Topology}, 17(1):413--467, 2013.

\bibitem{schuetz_fast_2018}
Dirk Schuetz.
\newblock A fast algorithm for calculating \$s\$-invariants.
\newblock {\em arXiv:1811.06432 [math]}, November 2018.
\newblock arXiv: 1811.06432.

\bibitem{xu_genus_1992}
Peijun Xu.
\newblock The genus of closed \$3\$-braids.
\newblock {\em Journal of Knot Theory and its Ramifications}, 1(3):303--326,
  1992.

\end{thebibliography}

\end{document}